\theoremstyle{plain}							
\newtheorem{thm}{Theorem}[section]			
\newtheorem{lem}[thm]{Lemma}					
\newtheorem{propo}[thm]{Proposition}
\newtheorem{Def}[thm]{Definition}
\theoremstyle{definition}					
\newtheorem{rmq}[thm]{Remark}
\newtheorem{notation}[thm]{Notation}
\newcommand{\od}{\partial{}}					
\newcommand{\ud}{\mathrm{d}}					
\newcommand{\RR}{\mathbb{R}^{2}}				
\newcommand{\ee}{\varepsilon}				
\DeclareMathOperator{\indc}{ind}
\newcommand{\CC}{\mathcal{C}}
\newcommand{\WW}{\mathcal{W}}
\newcommand{\adm}{\mathcal{A}}
\newcommand{\OO}{\Omega}
\newcommand{\cer}{\mathbb{S}^{1}}
\begin{document}

\title{Existence, regularity and structure of confined elasticae}


\author{François Dayrens\footnote{Institut Camille Jordan,
Universit\'e Lyon 1,
43, Boulevard du 11 novembre 1918,
69622 Villeurbanne Cedex,
France, e-mail: dayrens@math.univ-lyon1.fr} 
\and Simon Masnou\footnote{Institut Camille Jordan,
Universit\'e Lyon 1,
43, Boulevard du 11 novembre 1918,
69622 Villeurbanne Cedex,
France, e-mail: masnou@math.univ-lyon1.fr} 
\and Matteo Novaga\footnote{Dipartimento di Matematica, Universit\`a di Pisa, Largo Bruno Pontecorvo 5, 56127 Pisa, Italy, e-mail: novaga@dm.unipi.it}}

\maketitle

\begin{abstract}
We consider the problem of minimizing the bending or elastic energy among Jordan curves confined in a given open set $\Omega$. We prove  existence, regularity and some structural properties of minimizers. In particular, when $\Omega$ is convex we show that a minimizer 
is necessarily a convex curve. We also provide an example of a minimizer with self-intersections.
\end{abstract}


\section{Introduction}
The study of curvature-based energies for curves started with Bernoulli and Euler's works on elastic rods and thin beams, see~\cite{Sachkov2008} for a historical overview.

Considering a regular curve $\gamma$ with curvature $\kappa$, its bending energy -- also called Bernoulli-Euler's elastic energy -- is given by
\[\int_\gamma \kappa^2\ud s .\]
Such energy is not only fundamental in the context of mechanics, it is also important in image processing and computer vision, see for instance the applications to amodal completion, image inpainting, image denoising, or shape smoothing in~\cite{MumfordNitzberg,Mumford,Ambrosio2003,EsedogluShen,Bretin2011,Cao2011,Chan2002,Citti2006,Masnou2006,Schoenemann-et-al-tip11,SchoenemannKMC12,Bredies2015,ulen15}.

Curves which are critical points or minimizers of the bending energy are called \emph{elasticae}, and the study of their properties have motivated many contributions, see a few references below. Free minimization is not well-posed, since the energy of a circle with radius $R$ vanishes as $R\to+\infty$, and the only curves with zero bending energy are straight lines. Therefore, 
additional constraints are needed in order to have non-trivial minimizers. 
Frequently the length of the curve is prescribed (see for instance \cite{Avvakumov2013,Sachkov2012}) but other constraints can also be used, as in the non-exhaustive following list:
\begin{itemize}
\item prescribed enclosed area for closed curves in the plane~\cite{Arreaga2002,Bucur2014,Ferone2014};
\item open curves in the plane with clamped ends~\cite{Coope1992,Forsythe1973,Horn1983};
\item confined closed curves, in a phase-field approximation context~\cite{Dondl2011,2015arXiv150701856D};
\item knotted curves in $\mathbb{R}^3$~\cite{Langer1984a};
\item curves lying in a Riemannian manifold~\cite{Koiso1992,Langer1984};
\item obstacle problem for graphs with adhesive terms \cite{Miura2015}.
\end{itemize}

A classical variational issue related to the bending energy is the study of the $L^2$-gradient flow, i.e. the time-dependent evolution which decreases the most the bending energy, see for instance \cite{Linner89,Langer1985} for the general flow, \cite{Okabe2007} for the area-preserving case, and~\cite{Olischlaeger2009} for numerical approximations. 

Another classical issue is the study of the relaxation of the bending energy, i.e. its lower semicontinuous envelop for a suitable notion of convergence for curves or enclosed sets. In particular, the series of papers~\cite{Bellettini1993,Bellettini2004, Bellettini2007} are devoted to the classification of sets in $\RR$ with finite relaxed energy, i.e. sets which are $L^1$-limits of sequences of smooth sets with uniformly bounded energy. Such limits are ubiquitous in the aforementioned references in computer vision and image processing. In particular, it is proved that the (reduced) boundaries of such sets can be covered by a countable collection of $W^{2,2}$ curves with finite elastic  energy and without crossing, i.e. with possible but only tangential autocontacts. 

The problem we tackle in this paper is directly related to such curves: we address the existence, the regularity, and the structure of minimizers of the elastic energy among $W^{2,2}$ curves confined in a prescribed open set $\OO\subset\RR$ and with possible tangential self-intersections. It is worth noticing that such curves are not necessarily limits of Jordan curves, because tangential crossings are allowed. We shall see however that, up to a reparameterization, minimal curves are indeed limits of Jordan curves, so that our problem is indeed equivalent to minimizing the elastic energy among Jordan curves contained in $\OO$.

\smallskip

The paper is organized as follows:  in Section \ref{Definition} we introduce the problem and give basic notation and definitions. Section \ref{existence} deals with the existence of minimal curves using the direct method of calculus of variations. In Section \ref{structure} we study carefully the structure of tangential self-intersections for minimal curves, and we prove that any minimal curve can be reparameterized so that its index is either $0$ or $1$. Section \ref{regularite} is devoted to the regularity of minimal curves, either near contacts points (autocontacts or contacts with the confinement) or on free parts: we prove that a minimal curve $\gamma$ is $W^{3,\infty}$, and that $\gamma^{[3]}$ and $\kappa'$ are functions with bounded variation. Furthermore, we show that $\gamma$ is locally $C^\infty$ away from particular contact points. In Section \ref{convexite} we prove that if the confinement $\OO$ is convex then every minimal curve encloses a convex set. 
Eventually, in Section \ref{example} we provide an example of a non-simply connected confinement in which every minimal curve is not a Jordan curve.

\subsection*{Acknowledgments}
The authors warmly thank Dorin Bucur, Alberto Farina, and Petru Mironescu
for fruitful discussions. FD and SM acknowledge the support of the French Agence
Nationale de la Recherche (ANR) under grant ANR-12-BS01-0014-01 (project
GEOMETRYA). MN was partially supported by the Italian CNR-GNAMPA and by the University of Pisa via the grant PRA-2015-0017.


\section{Preliminary definitions and formulation of the problem} \label{Definition}

We recall a few notions about planar regular curves. Let $\OO$ be a bounded and Lipschitz open set of $\RR$.

\begin{Def}
We denote by $H^2$ the set of all closed curves $\gamma$ whose distributional derivatives satisfy \[ \gamma, \gamma', \gamma'' \in L^2(\cer,\RR) . \]
\end{Def}

\noindent We refer to \cite{Aubin1998} and \cite{Hebey2000} for a general description of Sobolev spaces defined on manifolds. It is enough to recall that the Sobolev space $H^2$ endowed with the norm
\[ ||\gamma||_{H^2} = \left( \int_{\cer} \big( |\gamma(t)|^2 + |\gamma'(t)|^2 + |\gamma''(t)|^2 \big) \ud t \right)^{\frac{1}{2}} \]
(where $|.|$ stands for the Euclidean norm on $\RR$) is a Hilbert space continuously and compactly embedded in $C^{1,\alpha}$ for all $\alpha < 1/2$.

\begin{Def}[Regular curve]
We say that $\gamma : \cer \to \RR$ is a regular curve if $\gamma \in H^2$ and
\[ \forall t\in \cer, \qquad |\gamma'(t)|>0 . \]
\end{Def}

\noindent To every regular curve, we associate its Bernoulli-Euler energy:

\begin{Def}[Bending energy]
Let $\gamma: [0,L] \to \RR$ be a regular curve with arc-length parameterization (thus $L$ denotes the length of $\gamma$). We define its Bernoulli-Euler bending energy by
\begin{equation} \label{expression1}
\WW(\gamma) = \int_0^L |\gamma''(s)|^2 \ud s.
\end{equation}
\end{Def}

\noindent We focus on curves confined in $\OO$.

\begin{Def}[Confinement]
We say that $\gamma$ is confined in $\OO$ if $(\gamma)\subset \overline{\OO}$, where $(\gamma)$ denotes the image of the curve $\gamma$.
\end{Def}

\begin{rmq}
Assume that $\gamma$ is a regular curve confined in $\OO$, and let $x \in \od \OO \cap (\gamma)$ such that $\od \OO$ admits a tangent at $x$. Then, by the confinement condition, $\gamma$ is tangent to $\od \OO$ at $x$.
\end{rmq}

\noindent Lastly, we recall the definition of the index of a closed curve.

\begin{Def}
Let $\gamma : \cer \to \RR=\mathbb{C}$ be an oriented, continuous and piecewise $C^1$ closed curve. For all $z\in\RR\smallsetminus (\gamma)$ we define the index of $\gamma$ around $z$ by
\[ \indc_\gamma (z) = \frac{1}{2i\pi} \int_{\gamma} \frac{\ud w}{w-z}
= \frac{1}{2i\pi} \int_{\cer} \frac{\gamma'(s)}{\gamma(s)-z}\ud s .\]
\end{Def}

The set of Jordan curves is not suitable for minimizing the elastic energy: we will see in Section \ref{example} an explicit example of a confinement set $\OO$ for which minimizers have self-contacts. As in~\cite{Ferone2014,Bucur2014}, we could rather use all $C^1$-limits of Jordan curves. However, in view of the system of curves which appear canonically in~\cite{Bellettini1993,Bellettini2004,Bellettini2007}, we will work with the following larger class of admissible curves:
\begin{Def}
Let $\adm$ denote the set of regular, closed curves confined in $\OO$ with possible tangential self-intersections. Any such curve is called {\it admissible}.
\end{Def}
Clearly, $\adm$ contains the closure of the set of Jordan curves with respect to $C^1$ convergence (see Figure \ref{ensemble}). The inclusion is strict for tangential crossings are allowed. For instance the middle curve in Figure  \ref{ensemble} can be parameterized as a pinched eight.

\begin{figure}[!ht]
\begin{center}

\begin{tikzpicture}
\draw [thick]
	plot [domain=0:6.4,samples=100] ({5+2*cos(\x r)} , {sin(2*\x r)}) ;
\draw [thick]
	plot [domain=0:180,samples=100]
		({+2*sin(\x)}, {-4*sin(2*\x)*(\x/180)*(1-\x/180)})
	plot [domain=0:180,samples=100]
		({-2*sin(\x)}, {-4*sin(2*\x)*(\x/180)*(1-\x/180)}) ;
\draw [thick]
	plot [domain=0:180,samples=100]
		({-5+6*(\x/180)*(1-\x/180)},{+sin(2*\x)*(\x/180)*(1-\x/180)})
	plot [domain=0:180,samples=100]
		({-5-2*sin(3*\x)},{-4*sin(2*\x)*(\x/180)*(1-\x/180)}) ;
\draw (-5,-1.5) node{admissible curve} (0,-1.5) node{admissible curve} (5,-1.5) node {non admissible curve} ;
\end{tikzpicture}

\end{center}
\caption{Admissible curves}
\label{ensemble}
\end{figure}
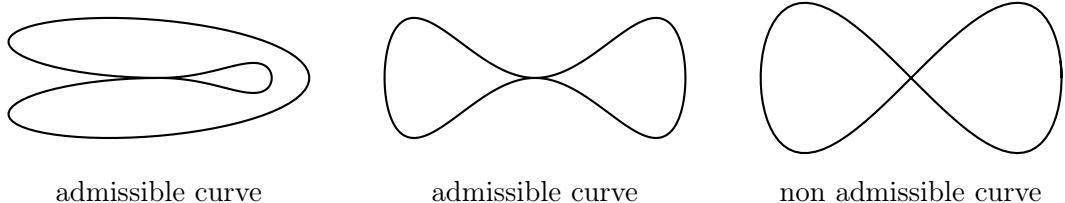

In this paper we study the minimization problem
\begin{equation} \label{problem}
\min_{\gamma \in \adm} \WW(\gamma).
\end{equation}
Clearly, any Jordan curve $\gamma$ can be parameterised so that $\indc_\gamma$ takes values in $\{ 0,1\}$. Furthermore, the index $\indc_{\tilde{\gamma}}$ of any $C^1$--limit ${\tilde{\gamma}}$  of Jordan curves takes values in $\{0,1\}$. Of course, this is no more true in general for curves in $\adm$. However, we will prove in Section \ref{structure} that any curve $\gamma_{min}$ which is minimal for \eqref{problem} (without index restriction) can be reparameterised in order to satisfy $\indc_{\gamma_{min}}(\RR\setminus(\gamma_{min}))\in\{ 0,1\}$.


\section{Existence} \label{existence}

We first prove existence of minimizers for \eqref{problem} using the direct method of calculus of variations. Given a minimizing sequence of curves, we use a constant speed parametrization of the curves defined on $\cer$. This yields a simple expression of the energy (as in \eqref{expression1}) and a constant parametrization space which does not involve the length of each curve. Definition (\ref{expression1}) is modified according to the following obvious proposition:

\begin{propo}
Let $\gamma : \cer \to \RR$ be a regular constant speed parametrized curve with length $L$, then
\begin{equation} \label{expression2}
\WW(\gamma) = \frac{1}{L^3} \int_{\cer} |\gamma''(t)|^2 \ud t .
\end{equation}
\end{propo}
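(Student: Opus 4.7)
The plan is a one-shot change of variables from the constant speed parameter $t\in\cer$ to the arc-length parameter $s\in[0,L]$, comparing the result with the original definition \eqref{expression1}. First I would fix the normalization convention: reading the $H^2$ norm displayed above, the measure $\ud t$ on $\cer$ has unit total mass, so combining the constant speed hypothesis with the length identity $L=\int_\cer |\gamma'(t)|\,\ud t$ pins down the constant value $|\gamma'(t)|\equiv L$.

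Next I would introduce the arc-length reparametrization $\tilde\gamma:[0,L]\to\RR$ defined by $\tilde\gamma(s)=\gamma(s/L)$. Two differentiations yield $|\tilde\gamma'(s)|\equiv 1$ and $\tilde\gamma''(s)=L^{-2}\gamma''(s/L)$. Substituting into \eqref{expression1} and performing the substitution $t=s/L$, $\ud s = L\,\ud t$ gives
\[\WW(\gamma)=\int_0^L |\tilde\gamma''(s)|^2\,\ud s = L^{-4}\int_0^L |\gamma''(s/L)|^2\,\ud s = L^{-3}\int_\cer |\gamma''(t)|^2\,\ud t,\]
which is exactly \eqref{expression2}.

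As the statement is advertised as ``obvious'', there is no serious obstacle. The only point that deserves to be spelled out is the normalization of $\cer$ that makes $|\gamma'|$ coincide with $L$ itself (rather than $L/2\pi$ or some other scalar multiple); once this convention is fixed, the identity follows purely from the chain rule and the one-variable change of variables formula.
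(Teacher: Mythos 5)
Your computation is correct and is exactly the (unwritten) argument the paper intends: the proposition is stated without proof as ``obvious'', and the substitution $s=Lt$ together with the chain rule is all there is to it. One caveat about the normalization you rightly single out as the only delicate point: the $H^2$ norm display does not actually determine the total mass of $\cer$, and the paper itself later sets $\int_{\cer}1\,\ud t=2\pi$ (in the proof of Proposition \ref{majoration}), under which the constant speed is $L/2\pi$ and \eqref{expression2} would acquire a factor $(2\pi)^3$. This harmless inconsistency is the paper's rather than yours; you are correct that $\int_{\cer}\ud t=1$, equivalently $|\gamma'|\equiv L$, is the convention under which \eqref{expression2} holds verbatim, and under any other normalization only the constant changes, which affects nothing in the sequel.
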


The compactness of a minimizing sequence in $\adm$ follows from a uniform bound on the lengths which we now prove.

\begin{propo} \label{majoration}
For all $M>0$, there exist two constants $C,C'>0$ depending only on $M$ and $\OO$ such that, for all curves $\gamma \in \adm$ satisfying  $\WW(\gamma) \leqslant M$, we have
\[ ||\gamma'||_{L^2} \leqslant C \qquad \text{and} \qquad L \leqslant C'.\]
\end{propo}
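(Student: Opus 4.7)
The plan is to first derive an upper bound on the length $L$ of $\gamma$ depending only on $M$ and $\mathrm{diam}(\OO)$ via a ``straightening by pieces'' argument exploiting the fact that the curve must fit inside the bounded set $\overline{\OO}$. The bound on $\|\gamma'\|_{L^2}$ will then follow at once from the constant-speed parametrization on $\cer$.

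Reparametrizing $\gamma$ by arc length on $[0,L]$ and writing $T=\gamma'$, $\kappa = |\gamma''|$, I would first apply Cauchy--Schwarz to control the total absolute curvature:
$$\int_0^L |\kappa(s)|\,\ud s \leq \sqrt{L\,\WW(\gamma)} \leq \sqrt{LM}.$$
I would then split $[0,L]$ into $N$ consecutive subintervals $[a_j,b_j]$ on which $\int_{a_j}^{b_j} |\kappa|\,\ud s \leq \pi/3$, which is possible with $N \leq 1 + \tfrac{3}{\pi}\sqrt{LM}$. On each such subinterval the unit tangent $T$ stays within angular distance $\pi/3$ of $T(a_j)$, hence $T(s)\cdot T(a_j) \geq 1/2$. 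Integrating and using the confinement gives
$$\tfrac{1}{2}(b_j-a_j) \leq \bigl(\gamma(b_j)-\gamma(a_j)\bigr)\cdot T(a_j) \leq |\gamma(b_j)-\gamma(a_j)| \leq \mathrm{diam}(\OO),$$
so each piece has length at most $2\,\mathrm{diam}(\OO)$. Summing yields
$$L \leq 2\,\mathrm{diam}(\OO)\Bigl(1 + \tfrac{3}{\pi}\sqrt{LM}\Bigr),$$
which is a quadratic inequality in $\sqrt{L}$ and produces an explicit bound $L \leq C'(M,\OO)$.

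For the bound on $\gamma'$, since $\gamma : \cer \to \RR$ is parametrized with constant speed on the circle of length $2\pi$, $|\gamma'|$ is almost everywhere equal to $L/(2\pi)$, and therefore $\|\gamma'\|_{L^2}^2 = L^2/(2\pi) \leq C^2$ with $C$ depending only on $M$ and $\OO$. The only delicate point in the argument is the straightening step, given that admissible curves are merely $H^2$ and hence $T$ is only continuous; however, $\int_{I_j}|\kappa|\,\ud s$ is precisely the total variation on $I_j$ of the angle of $T \in C^{0,\alpha}$, which is exactly what the projection argument requires, so no additional regularity is needed.
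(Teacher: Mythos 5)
Your proof is correct, but it follows a genuinely different route from the paper's. The paper stays entirely in the constant-speed parametrization on $\cer$ and argues by functional interpolation: confinement gives $\|\gamma\|_{L^2}\leqslant \sqrt{2\pi}R$, the scaling identity $\WW(\gamma)=L^{-3}\|\gamma''\|_{L^2}^2$ gives $\|\gamma''\|_{L^2}^2\leqslant ML^3$, and an integration by parts on the closed curve yields $\|\gamma'\|_{L^2}^2\leqslant \|\gamma\|_{L^2}\|\gamma''\|_{L^2}$; chaining these produces first the bound on $\|\gamma'\|_{L^2}$ and then $L\leqslant\sqrt{2\pi}\,\|\gamma'\|_{L^2}$. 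You instead work in arc length and make the geometry explicit: Cauchy--Schwarz bounds the total absolute curvature by $\sqrt{LM}$, the curve is cut into $O(1+\sqrt{LM})$ nearly straight arcs on each of which the tangent turns by at most $\pi/3$, and the projection onto the initial tangent shows each arc has length at most $2\,\mathrm{diam}(\OO)$, giving a quadratic inequality in $\sqrt{L}$. Your handling of the regularity issue is also right: for an arc-length $H^2$ curve the tangent angle is an $H^1$ lifting whose derivative has modulus $|\kappa|$ a.e., which is all the turning estimate needs. What each approach buys: the paper's argument is shorter and exploits the absence of boundary terms on $\cer$, but is tied to the constant-speed setup; yours is translation-invariant (it sees $\mathrm{diam}(\OO)$ rather than a ball $B(0,R)\supset\OO$), makes the mechanism "bounded energy plus confinement forces bounded length" geometrically transparent, and would carry over verbatim to open arcs or to curves in $\mathbb{R}^n$. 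Both give $L=O(M)$ for large $M$, so neither is sharper in an essential way.
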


\begin{proof} For convenience, we set
\[ \int_{\cer} 1 \ud t = 2\pi \]
and we assume that $\OO \subset B(0,R)$. As $(\gamma) \subset \OO$, for all $t\in \cer \ : \ |\gamma(t)|\leqslant R$ therefore, by the Cauchy-Schwarz inequality,

\begin{equation} \label{gamma}
||\gamma||_{L^2}\leqslant \sqrt{2\pi}R . 
\end{equation}

\noindent Using \eqref{expression2} we have
\begin{equation} \label{gammapp}
||\gamma''||_{L^2}^2 \leqslant ML^3 .
\end{equation}

\noindent In order to control $||\gamma'||_{L^2}$, we use the integration by parts formula :
\[ \int_{\cer} \gamma'(t)\cdot \gamma'(t) \ud t =-\int_{\cer} \gamma''(t)\cdot \gamma(t) \ud t . \]
Thanks to the Cauchy-Schwarz inequality, we have
\begin{equation} \label{gammap}
||\gamma'||_{L^2}^2 \leqslant ||\gamma||_{L^2}||\gamma''||_{L^2} .
\end{equation}
and
\begin{equation} \label{L}
L = \int_{\cer} |\gamma'(t)| \ud t \leqslant \sqrt{2\pi}||\gamma'||_{L^2} .
\end{equation}

\noindent Combining (\ref{gammapp}) and (\ref{L}), one obtains
\begin{equation} \label{gammapp2}
||\gamma''||_{L^2}^2 \leqslant (2\pi)^{\frac{3}{2}}M ||\gamma'||_{L^2}^3 .
\end{equation}

\noindent Using (\ref{gammap}) with (\ref{gamma}) and (\ref{gammapp2}), we deduce that
\[ ||\gamma'||_{L^2}^2 \leqslant (2\pi)^{\frac{5}{4}}R\sqrt{M}||\gamma'||_{L^2}^{3/2}, \]
therefore
\begin{equation} \label{gammap2}
||\gamma'||_{L^2} \leqslant (2\pi)^{\frac{5}{2}}R^2M.
\end{equation}
Eventually, using \eqref{L}
\[ L \leqslant (2\pi)^3R^2M .\]

\end{proof}

Having a compactness property for our minimization problem, we can prove the following theorem.

\begin{thm}
The minimization problem
\[ \min_{\gamma \in \adm } \WW(\gamma) \]
admits a solution.
\end{thm}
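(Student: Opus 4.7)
The plan is to apply the direct method of calculus of variations to a minimizing sequence parameterized with constant speed on $\cer$. Let $(\gamma_n)\subset \adm$ be a minimizing sequence for $\WW$; parameterize each $\gamma_n$ with constant speed $L_n/(2\pi)$, where $L_n$ is the length of $\gamma_n$, so that formula \eqref{expression2} applies. For $n$ large, $\WW(\gamma_n)\leqslant M:=\inf_{\adm}\WW+1$, hence Proposition \ref{majoration} yields uniform upper bounds on $\|\gamma_n'\|_{L^2}$ and $L_n$. Since $\gamma_n\subset\overline{\OO}$ and $\OO$ is bounded, $\|\gamma_n\|_{L^\infty}$ is bounded, hence so is $\|\gamma_n\|_{L^2}$. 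Finally, from \eqref{expression2} and the upper bound on $L_n$, $\|\gamma_n''\|_{L^2}^{2}=L_n^{3}\WW(\gamma_n)$ is bounded, so $(\gamma_n)$ is bounded in $H^2(\cer,\RR)$.

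The second step is to secure a strictly positive lower bound on $L_n$, which is needed to pass to the limit in \eqref{expression2} and to ensure the limit curve is regular. For any closed curve, Fenchel's inequality gives $\int_{\gamma_n}|\kappa_n|\,\ud s\geqslant 2\pi$, and Cauchy--Schwarz gives
\[
(2\pi)^{2}\leqslant\Bigl(\int_{\gamma_n}|\kappa_n|\,\ud s\Bigr)^{2}\leqslant L_n\,\WW(\gamma_n)\leqslant L_n M,
\]
so $L_n\geqslant 4\pi^{2}/M>0$. By the compact embedding $H^{2}\hookrightarrow C^{1,\alpha}$, a subsequence (not relabelled) satisfies $\gamma_n\rightharpoonup\gamma$ in $H^{2}$ and $\gamma_n\to\gamma$ in $C^{1,\alpha}$, and $L_n\to L$ for some $L\geqslant 4\pi^{2}/M$. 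Since $|\gamma_n'|\equiv L_n/(2\pi)$, the $C^1$ convergence yields $|\gamma'|\equiv L/(2\pi)>0$, so $\gamma$ is a regular $H^2$ curve of length $L$; and uniform convergence gives $(\gamma)\subset\overline{\OO}$.

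Next I must verify that $\gamma\in\adm$, i.e.\ that $\gamma$ has only tangential self-intersections. Assume by contradiction that $\gamma(t_0)=\gamma(s_0)$ with $t_0\neq s_0$ and $\gamma'(t_0),\gamma'(s_0)$ linearly independent. The map $(t,s)\mapsto\gamma(t)-\gamma(s)$ has invertible differential at $(t_0,s_0)$; since $\gamma_n\to\gamma$ in $C^{1}$, the same map for $\gamma_n$ still has invertible differential in a fixed neighbourhood of $(t_0,s_0)$ for $n$ large, and by a quantitative inverse function argument its image covers a neighbourhood of $0$. In particular $\gamma_n$ has a transverse self-intersection close to the point $\gamma(t_0)$, contradicting $\gamma_n\in\adm$. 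Thus all self-intersections of $\gamma$ are tangential and $\gamma\in\adm$. This admissibility step is the most delicate part of the argument.

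Finally, the lower semicontinuity follows: weak convergence in $H^2$ gives $\|\gamma''\|_{L^2}^{2}\leqslant\liminf_n \|\gamma_n''\|_{L^2}^{2}$, and combining with $L_n\to L>0$ we obtain
\[
\WW(\gamma)=\frac{1}{L^{3}}\|\gamma''\|_{L^{2}}^{2}\leqslant\liminf_{n\to\infty}\frac{1}{L_n^{3}}\|\gamma_n''\|_{L^{2}}^{2}=\liminf_{n\to\infty}\WW(\gamma_n)=\inf_{\adm}\WW,
\]
so $\gamma$ is a minimizer. \qed
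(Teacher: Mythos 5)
Your proof is correct and follows the same direct-method skeleton as the paper (constant-speed parametrization, the $H^2$ bound from Proposition \ref{majoration}, weak-$H^2$/strong-$C^1$ compactness, and lower semicontinuity of $\|\cdot\|_{L^2}^2/L^3$), but it diverges in two places worth noting. For the crucial non-degeneracy of the limit, the paper \emph{modifies} the minimizing sequence: it translates each short curve into a small ball $B(x,\alpha)\subset\OO$ and dilates until the image touches $\partial B(x,\alpha)$, which strictly decreases $\WW$ and forces $L(\gamma_n)\geqslant\alpha$. You instead derive a universal lower bound $L_n\geqslant 4\pi^2/M$ valid for \emph{every} admissible curve of energy at most $M$, via Fenchel's inequality $\int_{\gamma_n}|\kappa_n|\,\ud s\geqslant 2\pi$ (which does hold for closed regular $W^{2,2}$ curves, since the tangent indicatrix is a closed absolutely continuous curve on $\cer$ that cannot lie in an open half-circle) combined with Cauchy--Schwarz. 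Your route is cleaner in that it requires no surgery on the sequence and gives a quantitative length--energy inequality $L\,\WW(\gamma)\geqslant 4\pi^2$; the paper's route is more elementary and self-contained but needs the extra "saturation" construction. Secondly, you supply a genuine argument for why the limit curve has only tangential self-intersections (a non-parallel crossing of $\gamma$ would, by an inverse-function argument applied to $(t,s)\mapsto\gamma_n(t)-\gamma_n(s)$, force a transversal crossing of $\gamma_n$ for large $n$); the paper asserts this stability under $C^1$ convergence without proof, so your version actually fills in a detail the paper leaves implicit. Both arguments are sound.
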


\begin{proof} \

\textbf{Step 1:} Competitor for minimality. \\
Let $(\gamma_n)_n$ be a minimizing sequence such that, without loss of generality, $\sup_n \WW(\gamma_n)\leq M$ for some $M>0$.
Proposition \ref{majoration} guarantees a uniform bound on $(\gamma_n)_n$ with respect to the $H^2$ norm. Plugging \eqref{gammap2} into \eqref{gammapp2}, one gets
\begin{equation} \label{gammapp3}
||\gamma_n''||_{L^2}^2 \leqslant (2\pi)^9R^6M^4 .
\end{equation}

\noindent Using (\ref{gamma}), (\ref{gammap2}) and (\ref{gammapp3}) we calculate that
\[ ||\gamma_n||_{H^2} = \sqrt{||\gamma_n||^2_{L^2} + ||\gamma_n'||^2_{L^2} + ||\gamma_n''||^2_{L^2} } \leqslant
\sqrt{ 2\pi R^2+(2\pi)^5 M^2R^4+(2\pi)^9M^4R^6 } .\]

Then, by Sobolev compact embedding, there exists a curve $\gamma : \cer \to \RR$ and a subsequence $(\gamma_{n_k})_k$ such that
\[ \gamma_{n_k} \longrightarrow \gamma \]
weakly in $H^2$, strongly in $C^1(\cer,\RR)$.

Now we have to check that $\gamma \in \adm$ and its energy is minimal. First we prove that the limit curve $\gamma$ is not degenerate, i.e. not reduced to a point.

\medskip

\textbf{Step 2:} Minimal length. \\
We can slightly modify the minimizing sequence to prescribe a lower bound on its length. Let $\alpha>0$ and $x\in \OO$ such that $B(x,\alpha)\subset\OO$. Let us prove that, for all $n$, $L(\gamma_n)\geqslant \alpha$. Indeed, for $n$ such that $L(\gamma_n)<\alpha$, consider $\tilde{\gamma}_n$ the curve given by a translation of $\gamma_n$ satisfying $x \in \text{im }\tilde{\gamma}_n$. By a length argument, $\tilde{\gamma}_n \subset B(x,\alpha) \subset \OO$. Then let $H_n$ be defined for all $y\in \RR$ by
\[ H_n(y) = x+(1+\ee_n)(y-x) . \]
$\ee_n>0$ is chosen to saturate $H_n(\tilde{\gamma}_n) \subset \overline{B}(x,\alpha)$, i.e. $H_n(\tilde{\gamma}_n) \cap \od B(x,\alpha) \neq \emptyset$. We use (\ref{expression2}) to compute
\[ \WW(H_n(\tilde{\gamma}_n)) = \frac{1}{1+\ee_n} \WW(\gamma_n) < \WW(\gamma_n) . \]
As $H_n(\tilde{\gamma}_n)$ links $x$ and a point of the circle $\od B(x,\alpha)$, $L(H_n(\tilde{\gamma}_n)) \geqslant \alpha$, and one can replace $\gamma_n$ by $H_n(\tilde{\gamma}_n)$ in the minimizing sequence. With no loss of generality, we therefore assume that $L(\gamma_n)\geqslant \alpha$.

\medskip

\textbf{Step 3:} Admissibility and minimality. \\
The strong convergence in $C^1$  of the minimizing sequence leads to uniform convergence of $(\gamma_n)$ and $(\gamma_n')$, thus we recover a limit curve $\gamma$ with  constant speed parametrization, positive length, possible tangential self-contacts and confinement in $\OO$. Therefore $\gamma \in \adm$. Moreover, thanks to uniform convergence of $(\gamma_n')$, we have that $L(\gamma_n) \to L(\gamma)>0$. Then using the lower semi-continuity of the $L^2$ norm with respect to  weak convergence, we have
\[ \liminf_{n\to +\infty} \frac{1}{L(\gamma_n)^3} ||\gamma_n''||^2_{L^2}
\geqslant \frac{1}{L(\gamma)^3} ||\gamma''||^2_{L^2} . \]
Therefore, in view of (\ref{expression2}), $\WW$ is lower semi-continuous, and $(\gamma_n)$ being a minimizing sequence,
\[ \WW(\gamma) = \min_{\tilde{\gamma}} \WW(\tilde{\gamma}) . \]

\end{proof}

\subsection{Contact with the confinement}

Using simple geometric transformations, we can prove that a minimal curve has at least two contact points with the boundary of the confinement $\OO$.

\begin{thm}
Let $\gamma \in \adm$ be a minimal curve. There exist two distinct times $t,s \in \cer$ such that $\gamma(s) \neq \gamma(t)$ and $\gamma(t),\gamma(s) \in \od \OO$.
\end{thm}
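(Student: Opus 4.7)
The plan is to argue by contradiction, assuming $\gamma(\cer)\cap\od\OO$ consists of at most one point, and to build an admissible competitor of strictly smaller energy by an affine transformation. The key scaling identity is: for any $\lambda>0$, any centre $c\in\RR$, and the homothety $H_\lambda(y)=c+\lambda(y-c)$, one has $\WW(H_\lambda\circ\gamma)=\WW(\gamma)/\lambda$ (length is multiplied by $\lambda$, curvature by $1/\lambda$), while translations leave $\WW$ invariant. Moreover, affine maps preserve regularity, closedness of the parametrisation, and the tangential character of self-intersections, so that $H_\lambda\circ\gamma$ (or a translate of $\gamma$) belongs to $\adm$ as soon as its image lies in $\overline\OO$.

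If $\gamma(\cer)\cap\od\OO=\emptyset$, then $\gamma(\cer)$ is a compact subset of the open set $\OO$, hence at positive distance from $\od\OO$; any homothety $H_{1+\ee}$ with $\ee>0$ small enough therefore remains confined and has strictly smaller energy $\WW(\gamma)/(1+\ee)$, contradicting minimality. If instead $\gamma(\cer)\cap\od\OO=\{x_0\}$, a direct dilation cannot fix $x_0$ while expanding the rest of the curve, so I would translate first and then dilate. Pick a unit vector $n$ inside the interior Lipschitz cone of $\OO$ at $x_0$ and set $\gamma_t:=\gamma+tn$. Writing $\OO$ locally as $\{(x,y)\,:\,y>f(x)\}$ in a Lipschitz chart centred at $x_0$ with $n=e_2$, the inequality $y(\sigma)\ge f(x(\sigma))$ satisfied by $\gamma=(x,y)$ yields $y(\sigma)+t>f(x(\sigma))$, and a standard interior cone estimate then shows that the distance of $\gamma_t$ to $\od\OO$ is, near $x_0$, bounded below by a constant times $t$. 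Away from $x_0$, the image $\gamma(\cer)$ stays at positive distance from $\od\OO$ (since for every $\delta>0$ the compact set $\gamma(\cer)\setminus B(x_0,\delta)$ is disjoint from $\od\OO$), and a small translation preserves this gap. Consequently $\gamma_t\in\adm$ with $d(\gamma_t(\cer),\od\OO)>0$, and the dilation argument of the first case applied to $\gamma_t$ produces $H_{1+\ee}\circ\gamma_t\in\adm$ with $\WW(H_{1+\ee}\circ\gamma_t)=\WW(\gamma)/(1+\ee)<\WW(\gamma)$, contradicting minimality.

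The main obstacle is the single-contact case, specifically verifying that an inward translation by $tn$ pushes the whole image of $\gamma$ into $\OO$ at a uniform positive distance from $\od\OO$. This uses the Lipschitz regularity of $\od\OO$ near $x_0$ together with the hypothesis that $\gamma$ touches $\od\OO$ only at $x_0$; once this translation step is secured, the remaining scaling computation and the admissibility check for $H_{1+\ee}\circ\gamma_t$ are routine.
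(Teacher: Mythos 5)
Your proof is correct and follows essentially the same route as the paper: dilation about an interior point when there is no boundary contact, and an inward translation (justified by the Lipschitz graph representation of $\od\OO$) followed by dilation when the contact set is a single point. The only difference is cosmetic: by phrasing the second case as $\gamma(\cer)\cap\od\OO=\{x_0\}$ and using the confinement inequality $y(\sigma)\ge f(x(\sigma))$ for all parameters, you absorb in one step the paper's separate treatment of several parameter times hitting the same boundary point.
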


\begin{proof}
Assume first that $\gamma$ has no contact with $\od \OO$. Then, by compactness, $d(\gamma, \od \OO) >0$. Let $\lambda>0$ and consider $H_\lambda$ a homothety with rapport $1+\lambda$. By continuity, $d(H_\lambda(\gamma), \od \OO) > 0$ for $\lambda$ small enough. In addition, $H_\lambda(\gamma)\in \adm$ for it has the same regularity and self-contact properties as $\gamma$. However,
\[ \WW(H_\lambda(\gamma)) = \frac{1}{1+\lambda} \WW(\gamma) < \WW(\gamma) .\]
Thus $H_\lambda(\gamma)$ has a smaller energy than the minimal curve, and that is impossible.

Secondly assume $\gamma$ makes only one contact with $\od \OO$. Let $\gamma(t) \in \od \OO$ and write locally $\gamma$ and $\od \OO$ as graphs of functions. Then there exists a coordinate systems $(O,\vec{i},\vec{j})$ such that
\begin{itemize}
\item $O=\gamma(t)$,
\item $\vec{i}$ and $\gamma'(t)$ coincide (up to a positive constant),
\item there exists an open subset $\mathcal{U}$ such that $(\gamma) \cap \mathcal{U} = (\gamma_{|[t_1,t_2]})$ and $\od \OO \cap \mathcal{U} = \Gamma$ are written as graphs of functions (with $t\in [t_1,t_2]$),
\item in the local coordinate system, $\Gamma \leqslant \gamma_{|[t_1,t_2]}$.
\end{itemize}
Let $\tau>0$ and consider $T_\tau$ the translation with respect to $\tau\vec{j}$. As before, $d\Big( (\gamma) \smallsetminus (\gamma_{|]t_1,t_2[}) , \od \OO \Big) > 0$ so $d\Big( (T_\tau(\gamma)) \smallsetminus (T_\tau(\gamma_{|]t_1,t_2[})) , \od \OO \Big) > 0$ for $\tau$ small enough.
Moreover, in the local coordinates, $\Gamma \leqslant \gamma_{|[t_1,t_2]} < T_\tau(\gamma_{|]t_1,t_2[})$, thus
$d(T_\tau(\gamma), \od \OO) >0$, 
and $\WW(T_\tau(\gamma)) = \WW(\gamma)$ should be minimal. Thanks to the previous case, that leads to a contradiction.

Eventually, assume there are several parts of $\gamma$ making contact with the same boundary point $z\in \od \OO$ (but no pair $(t,s)$ such that $\gamma(s) \neq \gamma(t)$ and $\gamma(t),\gamma(s) \in \od \OO$). One can repeat the above construction representing only $\od \OO$ as a graph such that all points of $\gamma \cap \mathcal{U}$ are above $\Gamma$ in the local coordinate system. Using a suitable vertical translation, we obtain that, for $\tau$ small enough, $T_\tau (\gamma)$ remains far from $\od \OO$ outside $\mathcal{U}$, and $T_\tau (\gamma)$ remains strictly above $\Gamma$ inside $\mathcal{U}$. Thus
\[ d(T_\tau(\gamma), \od \OO) >0 \]
and the conclusion is the same.

Consequently, $\gamma$ admits at least two contact points with  $\od \OO$.
\end{proof}


\section{Tangential self-intersections of minimal curves} \label{structure}

In this section, we study the self-intersections of minimizers of $\WW$. We keep using the constant speed parametrization and we denote as $\CC$ the set of self-intersections of $\gamma$, i.e.
\[ \CC=\{ t \in \cer \ | \ \exists s \in \cer, \ s\neq t \ : \ \gamma(t) = \gamma(s) \} . \]
As the self-intersections are tangent, for all $t,s$ such that $\gamma(t)=\gamma(s)$ we have that $\gamma'(t)=\pm \gamma'(s)$.

\subsection{Decomposition of minimal curves}

The following theorem is proved using simple geometric arguments. Locally at a self-contact point $\gamma(t)$ of a minimal curve $\gamma$, there are several branches of $\gamma$ departing from $\gamma(t)$ with two possible directions $\gamma'(t)$ or $-\gamma'(t)$. The recurrent idea of the following proof is based on the observation that, in some configurations, one can start from $\gamma(t)$, run along $(\gamma)$ avoiding some pieces, and yet do a cycle to reach $\gamma(s)=\gamma(t)$ such that $\gamma'(s)=\gamma'(t)$ (see Figure \ref{principe f1}).

\begin{figure}[!ht]
\begin{center}

\begin{tikzpicture}
\draw [thick] plot [domain=0:2] ({-5+1.3*(\x-1/2)},{0.4-0.4*sin(3.1415*\x/1.3 r)}) ;
\draw [thick] plot [domain=-1:1] ({-5+\x},{-\x*\x}) ;
\draw [<-] plot [domain=0.3:1] ({-5+1.3*(\x-1/2)},{0.8-0.4*sin(3.1415*\x/1.3 r)}) ;
\draw [thick,<-] plot [domain=1.4:1.5] ({-5+1.3*(\x-1/2)},{0.4-0.4*sin(3.1415*\x/1.3 r)}) ;
\draw [thick,<-] plot [domain=0.1:0.2] ({-5+1.3*(\x-1/2)},{0.4-0.4*sin(3.1415*\x/1.3 r)}) ;

\draw [thick] plot [domain=0:2] ({5+1.3*(\x-1/2)},{0.4-0.4*sin(3.1415*\x/1.3 r)}) ;
\draw [thick] plot [domain=-1:1] (5+\x,{-\x*\x}) ;
\draw plot [domain=0:1] (5+\x*1.3,{\x*\x/(1.3*1.3)-0.3}) ;
\draw [<-] plot [domain=-0.5:0] ({5+\x},{-0.3-\x*\x}) ;
\draw [thick,<-] plot [domain=1.4:1.5] ({5+1.3*(\x-1/2)},{0.4-0.4*sin(3.1415*\x/1.3 r)}) ;
\draw [thick,<-] plot [domain=0.1:0.2] ({5+1.3*(\x-1/2)},{0.4-0.4*sin(3.1415*\x/1.3 r)}) ;
\draw [thick,->] plot [domain=-0.8:-0.7] ({5+\x},{-\x*\x}) ;
\draw [thick,->] plot [domain=0.7:0.8] ({5+\x},{-\x*\x}) ;

\draw [thick] plot [domain=0:2] ({1.3*(\x-1/2)},{0.4-0.4*sin(3.1415*\x/1.3 r)}) ;
\draw [thick] plot [domain=-1:1] (\x,{-\x*\x}) ;
\draw plot [domain=0:1] (\x*1.3,{\x*\x/(1.3*1.3)-0.3}) ;
\draw [<-] plot [domain=-0.5:0] ({\x},{-0.3-\x*\x}) ;
\draw [thick,<-] plot [domain=1.4:1.5] ({1.3*(\x-1/2)},{0.4-0.4*sin(3.1415*\x/1.3 r)}) ;
\draw [thick,<-] plot [domain=0.1:0.2] ({1.3*(\x-1/2)},{0.4-0.4*sin(3.1415*\x/1.3 r)}) ;
\draw [thick,<-] plot [domain=-0.8:-0.7] ({\x},{-\x*\x}) ;
\draw [thick,<-] plot [domain=0.7:0.8] ({\x},{-\x*\x}) ;
\end{tikzpicture}

\end{center}
\caption{Branching possibilities}
\label{principe f1}
\end{figure}
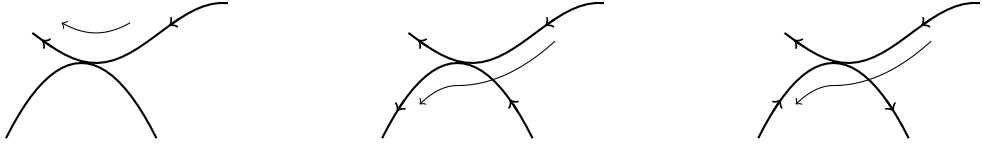

In some configurations $\gamma$ can be decomposed into two parts: an admissible sub-part $\tilde{\gamma}\in\adm$ and the rest $\gamma_u$, which may be a union of possibly open curves. If $\gamma_u$ is non empty and is not composed of segments then $\displaystyle \int_{(\gamma_u)}\kappa^2d{\mathcal H}^1>0$. Using
\[ \WW(\gamma) = \underbrace{\int_{(\gamma_u)}\kappa^2d{\mathcal H}^1}_{>0}
  + \underbrace{\int_{(\tilde\gamma)}\kappa^2d{\mathcal H}^1}_{=\WW(\tilde{\gamma})}, \]
we can see that $\gamma$ cannot be minimal. 

\begin{Def}
Let $\gamma : \cer \to \RR$ be an admissible closed curve.
\begin{itemize}
		\item Given an orientation of $\cer$, we say that $t_1, t_2, t_3$ are ordered times if, up to a circular permutation, $t_1<t_2<t_3$.
		\item Given two ordered times $t<s$, we denote by $\gamma_{|]t,s[}$ the restriction of $\gamma$ starting from $\gamma(t)$, parametrized with the same orientation as the ordered times, and ending at $\gamma(s)$. We denote by $\gamma_{-|]t,s[}$ the restriction of the curve $\gamma$ starting from $\gamma(t)$, parametrized with the inverse orientation, and ending at $\gamma(s)$.
		\item We denote by $\gamma_1 \oplus \gamma_2$ the curve obtained by concatenation of $\gamma_1$ and $\gamma_2$, i.e. if $\gamma_1:[a_1,b_1]\to\RR$ and $\gamma_2:[a_2,b_2]\to\RR$ then $\gamma_1 \oplus \gamma_2:[a_1,b_1+b_2-a_2]\to\RR$ is defined by $\gamma_1 \oplus \gamma_2(t)=\gamma_1(t)$ on $[a_1,b_1]$ and $\gamma_1 \oplus \gamma_2(t)=\gamma_2(t+a_2-b_1)$ on $[b_1,b_1+b_2-a_2]$.
		\item We say that $(t,s)$ is a self-contact couple if $\gamma(t)=\gamma(s)$ and $t< s$.
		\end{itemize}
\end{Def}

\noindent Notice that in the second item, $\gamma_{|]t_3,t_1[}$ has the same orientation as the ordered times and thus its parameter space does not contain $t_2$. Moreover, $\gamma_{|]t,s[}$ and $\gamma_{-|]t,s[}$ represent both distinct branches of $\gamma$ starting from $\gamma(t)$ and ending at $\gamma(s)$.

\begin{thm}[Decomposition] \label{decomposition}
Let $\gamma$ be a minimal curve for \eqref{problem}. Then every self-contact point has multiplicity not greater than $2$ i.e. every self-intersection is associated with its unique self-contact couple. In addition, if $\gamma$ admits at least two self-intersections then there exist four ordered times $t<s<\tau<\sigma$ such that:
\begin{itemize}
		\item $(t,s)$ and $(\tau,\sigma)$ are self-contact couples;
		\item $\gamma_{|]t,s[}$ and $\gamma_{|]\tau,\sigma[}$ are pieces of curve without contact points (i.e. $]t,s[\cap\mathcal{C}$ and $]\tau,\sigma[\cap\mathcal{C}$ are empty) and they possibly have contact with $\od \OO$;
		\item $\gamma_{|]s,\tau[}$ and $\gamma_{|]\sigma,t[}$ are curves without self-contact and with same energy. They possibly have tangent contact to each other and in that case for all $a \in ]s,\tau[$ and for all $b \in ]\sigma,t[$ such that $\gamma(a)=\gamma(b)$, we have
			\[ \WW(\gamma_{|[s,a]}) = \WW(\gamma_{|[b,t]}) \quad \text{and} \quad \WW(\gamma_{|[a,\tau]}) = \WW(\gamma_{|[\sigma,b]})  . \]
\end{itemize}

\end{thm}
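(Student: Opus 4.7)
I plan to iterate the principle stated just before the theorem: whenever a closed admissible sub-curve $\tilde\gamma$ can be built by reparametrizing on $\cer$ a union of sub-arcs of $\gamma$ (possibly orientation-reversed; reversal preserves the bending energy) with compatible tangents at every re-junction point, minimality of $\gamma$ forces $\WW(\tilde\gamma)\geq\WW(\gamma)$, and producing such a $\tilde\gamma$ with $\WW(\tilde\gamma)<\WW(\gamma)$ contradicts minimality.

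\textbf{Multiplicity at most $2$.} If $P=\gamma(t_1)=\gamma(t_2)=\gamma(t_3)$ for three distinct $t_i\in\cer$, tangentiality forces $\gamma'(t_i)=\varepsilon_i v$ with $\varepsilon_i\in\{\pm 1\}$; pigeonhole yields two indices with matching signs. The sub-arc of $\gamma$ bounded by the two corresponding times reparametrizes on $\cer$ as a closed $C^1$ regular curve, confined in $\overline{\OO}$, whose interior self-contacts are tangential because $\gamma\in\adm$; it therefore lies in $\adm$, and so does its complement. Their energies sum to $\WW(\gamma)$ and both are strictly positive (a non-degenerate closed regular curve cannot have $\kappa\equiv 0$), so the smaller sub-loop contradicts minimality.

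\textbf{Non-crossing and extraction of four ordered times.} For two distinct self-contact couples with $P_j=\gamma(t_j)=\gamma(s_j)$, introduce the signs $\eta_j$ by $\gamma'(s_j)=\eta_j\gamma'(t_j)$. In the crossing pattern $t_1<t_2<s_1<s_2$, cutting $\gamma=A\oplus B\oplus C\oplus D$ between consecutive parameters, a short case analysis provides in every sign combination a splitting of $\gamma$ into two $C^1$-closed admissible loops built from $A,B,C,D$ and, when $\eta_1=\eta_2=-1$, some of their reversals (stars denoting reversals): $(A\oplus B,\,C\oplus D)$ if $\eta_1=+1$, $(B\oplus C,\,D\oplus A)$ if $\eta_2=+1$, and $(A\oplus C^*,\,B\oplus D^*)$ if $\eta_1=\eta_2=-1$. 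The smaller loop contradicts minimality, so the couples are laminar. To extract a \emph{disjoint} pair I must further rule out the purely nested configuration $(t,s)\supset(\tau,\sigma)$: the easy sign cases $\eta_1=+1$ or $\eta_2=+1$ split $\gamma$ directly (for instance $\gamma=D\sqcup(A\oplus B\oplus C)$ when $\eta_1=+1$); the remaining ``double-cusp'' case $\eta_1=\eta_2=-1$ is the principal technical obstacle, which I would treat by observing that the two arcs $A,C$ joining $P_1$ and $P_2$ then share the same tangent lines at both endpoints and therefore either coincide---forcing multiplicity $\geq 3$ at $P_1$ or $P_2$ and contradicting the previous step---or touch tangentially at an interior point, producing a third self-contact couple at which an easier sign case eventually arises and restores a splitting. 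Once at least two disjoint couples are available, choosing an innermost such pair ensures $]t,s[\,\cap\,\CC=]\tau,\sigma[\,\cap\,\CC=\emptyset$ and, after a cyclic reindexing, provides $t<s<\tau<\sigma$.

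\textbf{Equality of the bridges.} Write $\gamma=A\oplus B\oplus C\oplus D$ with $A=\gamma_{|[t,s]}$ and $C=\gamma_{|[\tau,\sigma]}$ the two loops and $B=\gamma_{|[s,\tau]}$, $D=\gamma_{|[\sigma,t]}$ the two bridges; the sign regime forced by the previous step is $\eta_1=\eta_2=-1$ (otherwise $A$ or $C$ would split off as a direct admissible sub-loop). I then form the competitor $\gamma'=A\oplus B\oplus C\oplus B^*$: tangents match at every junction, and since $B,B^*$ trace the same image with opposite orientations, all new self-contacts are tangential, so $\gamma'\in\adm$ with $\WW(\gamma')=\WW(A)+2\WW(B)+\WW(C)$. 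Minimality gives $\WW(D)\leq\WW(B)$; the symmetric competitor $A\oplus D^*\oplus C\oplus D$ yields the reverse inequality, whence $\WW(B)=\WW(D)$. If the bridges share a tangential contact $\gamma(a)=\gamma(b)$, split $B=B_1\oplus B_2$ at $a$ and $D=D_1\oplus D_2$ at $b$: in the ``$+$''-signed subcase the two loops $A\oplus B_1\oplus D_2$ and $C\oplus D_1\oplus B_2$ would be admissible and split $\gamma$, contradicting minimality, so the contact is ``$-$''-signed; appropriate competitors built by swapping $B_1\leftrightarrow D_2^*$ and $B_2\leftrightarrow D_1^*$ are then admissible and yield, by the same minimality comparison, the two finer equalities $\WW(B_1)=\WW(D_2)$ and $\WW(B_2)=\WW(D_1)$ of the statement.
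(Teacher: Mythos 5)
Your overall strategy --- extracting $C^1$ admissible sub-loops from $\gamma$ and comparing energies with the original curve --- is exactly the paper's, and two of your three blocks are sound: the multiplicity bound coincides with the paper's Step~1, and your bridge-equality argument via the competitors $A\oplus B\oplus C\oplus B^{*}$ and their refinements at an interior contact $\gamma(a)=\gamma(b)$ is a clean rendering of the paper's Step~6. The problems are concentrated in the middle step, where you extract the two special couples, and they are genuine gaps rather than stylistic differences.

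First, the passage where you ``rule out the purely nested configuration'' is both unnecessary and incorrect. Unnecessary, because on $\cer$ two non-crossing chords are automatically in the disjoint normal form $t<s<\tau<\sigma$ after a circular shift of the base point: for two couples on a circle, ``nested'' and ``disjoint'' are the same configuration. Incorrect, because your own first step already forces $\gamma'(s)=-\gamma'(t)$ at \emph{every} self-contact couple of a minimal curve, so the ``easy sign cases'' $\eta=+1$ never occur and you are permanently in the case you defer; moreover two $C^1$ arcs joining $P_1$ to $P_2$ with matching tangent lines at both endpoints need neither coincide nor touch in between (that is precisely the shape of an admissible drop, see the two-self-contact example in Figure~\ref{generic}), and any further tangential contact again carries the sign $-1$, so no ``easier sign case eventually arises.'' Second, and more seriously, pairwise non-crossing (laminarity) is not enough to conclude: you must also exclude the three-couple configuration $t_1<t_2<s_2<t_3<s_3<s_1$, i.e.\ two sibling couples inside a third (equivalently, three pairwise disjoint chords up to a circular shift). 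This is the paper's Step~4, proved by selecting the cheapest of the three arcs $\gamma_{|]t_1,t_2[}$, $\gamma_{|]s_2,t_3[}$, $\gamma_{|]s_3,s_1[}$ and rerouting through its reversal. Without it, your laminar family may have three or more innermost chords, and then whichever two couples you select, a third self-contact couple lies entirely inside one of the two bridges, so the third bullet of the theorem fails. Finally, ``choosing an innermost such pair'' asserts an existence you do not prove: a priori there is an infinite strictly decreasing nested chain of couples, and one must show (paper's Steps~2 and~5) that $\CC$ is closed and that the limit pair $t=\sup_i t_i$, $s=\inf_i s_i$ satisfies $t<s$ --- which uses $\gamma'(t_i)=-\gamma'(s_i)$ together with $|\gamma'|>0$ --- and is itself a self-contact couple with $]t,s[\,\cap\,\CC=\emptyset$.
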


\begin{proof}
The proof is made in several small steps using different configurations. Let $\gamma$ be a minimal curve.

\textbf{Step 1:} Uniqueness of self-contact couples. \\
Assume there exist three distinct times $t_1<t_2<t_3$ such that $\gamma(t_1)=\gamma(t_2)=\gamma(t_3)$. Two of them have the same tangent vector, say $t_1$ and $t_2$. Then $\tilde{\gamma} = \gamma_{|[t_1,t_2]}$ is an admissible sub-part of $\gamma$, thus $\WW(\tilde{\gamma}) <  \WW(\gamma)$.
That is impossible because of the minimality of $\gamma$. Thus any self-intersection of $\gamma$ is associated with a unique pair of times $(t,s)$, and with the same argument as above, they have necessarily opposite orientations, i.e. $\gamma'(t) = - \gamma'(s)$. Consequently, a minimal curve does not have self-contact points with multiplicity higher than $2$.

\textbf{Step 2:} $\CC$ is closed. \\
Consider the continuous application
\[ \begin{array}{cccc}
G : & \cer \times\ \cer & \to & \RR\times\RR  \\
 & (s,t) & \mapsto & (\gamma(s)-\gamma(t),\gamma'(s)+\gamma'(t)) . \end{array} \]
We have $\CC = p(G^{-1}(\{0\}))$ with $p:\cer\times\cer \to \cer$ the projection on the first component. Being $G$ continuous and $p$ closed, we deduce that $\CC$ is closed.

\par\textbf{Step 3:} Self-contact couples are non crossing. \\
Given two self-contact couples $(t_1,s_1)$ and $(t_2,s_2)$, we can order them as $t_1<s_1<t_2<s_2$ (up to circular permutation). Indeed, the other possible configuration $t_1<t_2<s_1<s_2$ yields a decomposition into $\tilde{\gamma} = \gamma_{|]t_1,t_2[} \oplus \gamma_{-|]s_2,s_1[}$ (by construction $\tilde\gamma$ is $C^1$) and $(\gamma_u)$ = $(\gamma_{|]s_2,t_1[}) \cup (\gamma_{|]t_2,s_1[})$, which contradicts the minimality of $\gamma$.

\textbf{Step 4:} Self-contact couples are nested. \\
Given three self-contact couples $(t_1,s_1)$, $(t_2,s_2)$ and $(t_3,s_3)$, we can order them as $t_1 < t_2 < t_3 < s_3 < s_2 < s_1$.
Indeed, each pair of self-contact couples is non crossing, thus the only remaining possibilities are either the one above or $t_1 < t_2 < s_2 < t_3 < s_3 < s_1$. Let us show that the second case contradicts minimality. Choose the sub-part of $\gamma$ with least energy among $\gamma_{|]t_1,t_2[}$, $\gamma_{|]s_2,t_3[}$ and $\gamma_{|]s_3,s_1[}$. For convenience, suppose it is $\gamma_{|]t_1,t_2[}$. Consider the $C^1$ curve $\tilde{\gamma} = \gamma_{|]s_1,s_2[} \oplus \gamma_{-|]t_2,t_1[}$. By construction, $\tilde{\gamma}$ has less energy than $\gamma$, see Figure~\ref{trigoute}.

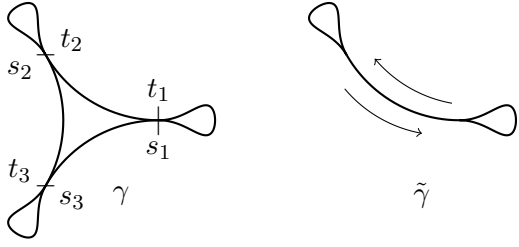
\begin{figure}[!ht]
\begin{center}

\begin{tikzpicture}
\draw[thick]
	plot [domain=0:180,samples=100]
		({1+3*(\x/180)*(1-\x/180)},{sin(2*\x)*(\x/180)*(1-\x/180)})
	node{$|$}
	+(-0.5,-1) node{$\gamma$}
	+(0,0.4) node{$t_1$}
	+(0,-0.4) node{$s_1$}
	plot[domain=0:60] ({1-sqrt(3)*sin(\x)},{sqrt(3)*(1-cos(\x))}) ;

\draw[rotate=120, thick]
	plot [domain=0:180,samples=100]
		({1+3*(\x/180)*(1-\x/180)},{sin(2*\x)*(\x/180)*(1-\x/180)})
	node{$-$}
	+(0,0.4) node{$s_2$}
	+(0,-0.4) node{$t_2$}
	plot[domain=0:60] ({1-sqrt(3)*sin(\x)},{sqrt(3)*(1-cos(\x))}) ;

\draw[rotate=240, thick]
	plot [domain=0:180,samples=100]
		({1+3*(\x/180)*(1-\x/180)},{sin(2*\x)*(\x/180)*(1-\x/180)})
	node{$-$}
	+(0,0.4) node{$s_3$}
	+(0,-0.4) node{$t_3$}
	plot[domain=0:60] ({1-sqrt(3)*sin(\x)},{sqrt(3)*(1-cos(\x))}) ;

\draw[shift={(4,0)}, thick]
	plot [domain=0:180,samples=100]
		({1+3*(\x/180)*(1-\x/180)},{sin(2*\x)*(\x/180)*(1-\x/180)})
	+(-0.5,-1) node{$\tilde{\gamma}$}
	plot[domain=0:60] ({1-sqrt(3)*sin(\x)},{sqrt(3)*(1-cos(\x))}) ;

\draw[shift={(4,0)}, rotate=120, thick]
	plot [domain=0:180,samples=100]
		({1+3*(\x/180)*(1-\x/180)},{sin(2*\x)*(\x/180)*(1-\x/180)}) ;

\draw[shift={(4.2,0.2)}, ->] plot[domain=10:50] ({1-sqrt(3)*sin(\x)},{sqrt(3)*(1-cos(\x))}) ;
\draw[shift={(3.8,-0.2)}, <-] plot[domain=10:50] ({1-sqrt(3)*sin(\x)},{sqrt(3)*(1-cos(\x))}) ;
\end{tikzpicture}

\end{center}
\caption{Example of non-nested self-contact couples}
\label{trigoute}
\end{figure}

\textbf{Step 5:} Special self-contact couples. \\
If $\gamma$ has at least two self-intersections, then there exist two unique self-contact couples $(t,s)$ and $(\tau,\sigma)$ such that $t<s<\tau<\sigma$, and $]t,s[\cap \ \CC$ and $]\tau,\sigma[\cap \ \CC$ are empty.
Indeed if $\gamma$ admits two distinct self-contact couples $(t_0,s_0)$ and $(\tau_0,s_0)$ then they are non-crossing. We order them so that $t_0 < s_0 < \tau_0 < \sigma_0$. We prove the claim for $[t_0,s_0]$, being the argument identical for $[\tau_0,\sigma_0]$. If $]t_0,s_0[\cap \ \CC \neq \emptyset$, consider a point of $\CC$ in $]t_0,s_0[$. By Step 3 the corresponding self-contact couple cannot cross $(t_0,s_0)$ therefore it is included in $]t_0,s_0[$. We denote by $(t_i,s_i)_{i\in I}$ the set of all self-contact couples included in $[t_0,s_0]$. Using Step 4, for all $i,j \in I, \ [t_i,s_i] \subset [t_j,s_j]$ or $[t_i,s_i] \subset [t_j,s_j]$. Considering $t=\sup_{i\in I} t_i$ and $s=\inf_{i\in I} s_i$, one obviously has $t\leqslant s$. Moreover, as $\CC$ is closed (Step 2), $t$ and $s$ are self-contact points. Finally, passing to the limit in $\gamma(t_i)=\gamma(s_i)$ and in $\gamma'(t_i)=-\gamma'(s_i)$, the $C^1$-regularity of $\gamma$ yields $t<s$ and thus $(t,s)$ is a self-contact couple included in $]t_0,s_0[$. By construction, $]t,s[ \cap \ \CC = \emptyset $. Similarly, we prove that there exist $\tau$ and $\sigma$ such that $]\tau,\sigma[ \cap \ \CC = \emptyset $.

\textbf{Step 6:} Branches energies \\
The minimality of $\gamma$ ensures that all paths between $\gamma_{|]t,s[}$ and $\gamma_{|]\tau,\sigma[}$ have the same "least" energy. By path we mean a choice of branches between $\gamma_{|]s,\tau[}$ and $\gamma_{|]\sigma,t[}$ at each intersection point allowing a regular link
between $\gamma_{|]t,s[}$ and $\gamma_{|]\sigma,t[}$. The last item of the theorem is just a reformulation of this "equal energy" property.
\end{proof}

\subsection{Index of a minimal curve}
By definition, curves of $\adm$ may have tangential crossings, therefore arbitrary indices. We will prove that a minimal curve $\gamma$ can be reparametrized in order to satisfy
\[ \indc_\gamma\big( \RR\smallsetminus (\gamma) \big) \subset \{0,1\} . \]
The core of the proof is the following technical lemma.

\begin{lem} \label{parametr}
We can choose a constant speed parametrization of $\gamma$ such that for all distinct $\alpha,\beta \in \cer$, $\gamma_{|]\alpha,\beta[}$ and $\gamma_{-|]\alpha,\beta[}$ may have tangential intersections but do not cross (i.e. remain in the same side one with respect to the other).
\end{lem}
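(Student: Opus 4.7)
If $\gamma$ is a Jordan curve then any constant speed parametrization satisfies the conclusion trivially, since the two sub-arcs $\gamma_{|]\alpha,\beta[}$ and $\gamma_{-|]\alpha,\beta[}$ share only the endpoints $\gamma(\alpha),\gamma(\beta)$ and therefore cannot intersect in their interiors. Assume henceforth that $\gamma$ has self-intersections.

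The plan is to exploit the strong structural information provided by Theorem~\ref{decomposition}: the self-contact couples are non-crossing, organised in a nested hierarchy, and each self-contact point has multiplicity exactly two with opposite tangent vectors $\gamma'(t)=-\gamma'(s)$. This tree-like structure will guide the construction of a canonical parametrization. Concretely, I would proceed recursively on the nesting depth of the self-contact couples. For each outermost couple $(t,s)$, the sub-piece $\gamma_{|[t,s]}$ is a ``lobe'' that starts at the contact point $P=\gamma(t)=\gamma(s)$ with tangent $\gamma'(t)$ and returns to $P$ with the opposite tangent $-\gamma'(t)$. I choose the parametrization so that every such lobe is traversed as a single continuous sub-piece; inside each lobe, the same rule is applied recursively to the inner nested couples. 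Because the choice is purely local at each contact point and the couples are nested and non-crossing, this construction yields a globally consistent constant speed $C^1$ parametrization of the whole image $(\gamma)$.

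With this parametrization in hand, I would verify the non-crossing property by contradiction. Pick $\alpha\neq\beta$ and suppose that $A:=\gamma_{|]\alpha,\beta[}$ and $B:=\gamma_{-|]\alpha,\beta[}$ tangentially cross at some point $Q$. Since $\gamma$ is admissible, $Q=\gamma(u)=\gamma(v)$ must be a self-contact point with $u$ lying in the parameter range of one arc and $v$ in the range of the other, and $\gamma'(u)=-\gamma'(v)$. The recursive construction forces the inner lobe $\gamma_{|[u,v]}$ to lie, in a neighbourhood of $Q$, on a single side of the shared tangent line at $Q$, while the complementary piece $\gamma_{|[v,u+2\pi]}$ lies on the opposite side. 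Hence the arcs $A$ and $B$, each seeing $Q$ through a pass on a different side, cannot cross tangentially at $Q$: they can at most touch tangentially, which yields the desired contradiction.

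The main obstacle is the rigorous verification that the inductive ``lobe by lobe'' rule produces a single constant speed $C^1$ closed curve covering the whole image $(\gamma)$ (as opposed to several disjoint closed sub-curves), together with the justification that the local ``opposite sides'' picture at each self-contact point is coherent globally. The opposite-tangent condition $\gamma'(t)=-\gamma'(s)$ and the nested structure from Theorem~\ref{decomposition} are the key tools for both tasks: together they should precisely determine, at every contact point, the unique way to pair the four half-branches that is compatible with both $C^1$-regularity and with the nested tree organisation of self-contact couples.
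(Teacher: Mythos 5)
There is a genuine gap: your construction never changes which points of the image belong to which branch near a contact point, and therefore cannot remove a crossing that is actually present in the given curve. Choosing the order in which ``lobes'' are traversed is a global combinatorial choice, whereas a tangential crossing is a local phenomenon. Concretely, near a self-contact point $Q$ the two branches are graphs of two $H^2$ functions $f$ and $g$ over the common tangent line, with $f(0)=g(0)$ and $f'(0)=g'(0)$; if $f-g$ changes sign (e.g. $f(x)-g(x)=x^3$), the branches cross no matter in which order you decide to run through the lobes. The step of your argument claiming that ``the recursive construction forces the inner lobe to lie on a single side of the shared tangent line'' is exactly what has to be proved, and it fails for the parametrizations you produce; the concluding ``contradiction'' is therefore circular. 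The idea you are missing, which is the core of the paper's proof, is a local surgery: one \emph{re-splits} the image into the two new branches $\max(f,g)$ and $\min(f,g)$, which by construction stay on one side of each other, and one checks that these still belong to $H^2$ precisely because the contact is tangential ($f=g$ and $f'=g'$ there). This genuinely modifies the map $\gamma$ (reassigning points of the image to the two arcs), not merely the order of traversal, while preserving the image, the constant-speed character, the admissibility and the energy.

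A secondary issue: your recursion ``on the nesting depth of the outermost couples'' is not well defined in general, because Theorem~\ref{decomposition} allows the connecting arcs $\gamma_{|]s,\tau[}$ and $\gamma_{|]\sigma,t[}$ to touch along a continuum, so there may be uncountably many self-contact couples and no meaningful notion of outermost couple or of finite depth. The paper avoids this by covering the contact region with finitely many local neighbourhoods in which both branches are graphs, performing the $\max/\min$ surgery in each, and checking that the surgeries carried out in successive neighbourhoods along $\gamma_{|]s,\tau[}$ are mutually consistent, each step leaving the previously treated portion untouched.
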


\begin{proof}
The lemma is clear if $\gamma$ is a simple curve. If $\gamma$ admits only one self-contact couple $(t,s)$ then we can write locally at $\gamma(t)=\gamma(s)$ (on what we call below a "local neighborhood") both branches as graphs of functions (called $f$ and $g$) over their common tangent line. Using $\max(f,g)$ and $\min(f,g)$, we can reparametrize $\gamma$ (actually, we reparametrize only one half of $\gamma$, either $\gamma_{|]t,s[}$ or $\gamma_{-|]t,s[}$) in order to keep one branch above the other. $f$ and $g$ belong to $H^2$ and as the contact is tangent then $\max(f,g)$ and $\min(f,g)$ are also in $H^2$. Thus the lemma is proved in that case.

If $\gamma$ has several self-intersections then we construct the parametrization by induction using Theorem \ref{decomposition}. We start from one drop $\gamma_{|]t,s[}$ and we consider $\mathcal{T}$ a tubular neighborhood of the (simple non-closed) curve $\gamma_{|]s,\tau[}$. We can define two sides of $\mathcal{T}$ (one "above" $\gamma_{|]s,\tau[}$ and the other "below"). We construct locally the parametrization in the same way as before using $\min$ and $\max$ functions.

A key point of the construction is that, thanks to Step 4 and 5 in the proof of Theorem \ref{decomposition}, the self-intersections have the same order along $\gamma_{|]s,\tau[}$ and $\gamma_{|-]t,\sigma[}$. It is easily seen that there exists a finite number of local neighborhoods along $\gamma_{|]s,\tau[}$, and on each of them the same reparametrization as above can be applied. More precisely, the construction must be coherent from a local neighborhood to the next one, following $\gamma_{|]s,\tau[}$ and $\gamma_{|-]t,\sigma[}$ and never coming back. In other words, at every step $n$ of the induction around a self-contact couple $(t_n,s_n)$ with $t_n \in ]s,\tau[$ and $s_n \in ]\sigma,t[$, $\gamma_{|]s,t_n[}$ and $\gamma_{|-]t,s_n[}$ have been parametrized in previous steps, and the construction changes only the parametrization around $\gamma(t_n)=\gamma(s_n)$ and the parametrization of $\gamma_{|]t_n,\tau[}$ and $\gamma_{|-]s_n,\sigma[}$. Therefore step $n$ of the construction does not change step $k$ for every $k<n$ (see Figure \ref{tubulaire}). Since there are finitely many local neighborhoods, the induction has a finite number of steps. At the end of the construction, each time two branches of $\gamma$ make a contact then they meet and separate in the same side. Thus the lemma is proved.

\begin{figure}[!ht]
\begin{center}

\begin{tikzpicture}
\fill [color=gray!40] 
	plot [domain=-1:1,samples=100]
		({5*\x + 0.5*(-(10*(\x)^4-12*(\x)^2+2)/(sqrt(25+(10*(\x)^4-12*(\x)^2+2)^2)))},
		{2*(\x-1)^2*(\x+1)^2*(\x) + 0.5*(5)/(sqrt(25+(10*(\x)^4-12*(\x)^2+2)^2)))}) -- 
	plot [domain=-1:1,samples=100]
		({5*(-\x) - 0.5*(-(10*(\x)^4-12*(\x)^2+2)/(sqrt(25+(10*(\x)^4-12*(\x)^2+2)^2)))},
		{2*(\x-1)^2*(\x+1)^2*(-\x) - 0.5*(5)/(sqrt(25+(10*(\x)^4-12*(\x)^2+2)^2)))}) --
	cycle ;
\draw [thick]
	plot [domain=-1:1,samples=100] ({5*\x}, {2*(\x-1)^2*(\x+1)^2*(\x)})
	plot [domain=-0.75:0.75,samples=100]
	({5*\x}, {2*(\x-1)^2*(\x+1)^2*(\x) +4*(\x-0.75)^2*(\x+0.75)^2}) ;
\foreach \t in {-1,-0.9,-0.8,-0.7,0.7,0.8,0.9,1}
	{\draw (5*\t,{2*(\t-1)^2*(\t+1)^2*(\t)}) circle (0.4) ;}

\draw [thick]
	plot [domain=0:180,samples=100] ({5+2*sin(\x)}, {-4*sin(2*\x)*(\x/180)*(1-\x/180)})
	plot [domain=0:180,samples=100] ({-5-2*sin(\x)}, {-4*sin(2*\x)*(\x/180)*(1-\x/180)}) ;
\fill [color=gray!40] 
	plot [domain=0:180,samples=100] ({-3+2*cos(\x)}, {-3+sin(\x)}) --
	plot [domain=-1:1,samples=100] ({-3+2*\x}, {-3-0.5*(\x-1)^2*(\x+1)^2}) -- cycle ;
\fill [color=gray!40] 
	plot [domain=0:180,samples=100] ({3+2*cos(\x)}, {-3+sin(\x)}) --
	plot [domain=-1:1,samples=100] ({3+2*\x}, {-3-0.5*(\x-1)^2*(\x+1)^2}) -- cycle ;

\draw [->] (-3.5,-0.7) -- (-3,-1.8) ;
\draw plot [domain=0:360,samples=100] ({-3+2*cos(\x)}, {-3+sin(\x)}) ;
\draw [thick]
	plot [domain=-1.1:1.1,samples=100] ({-3+2*\x}, {-3-0.5*(\x-1)^2*(\x+1)^2})
	plot [domain=-0.5:1,samples=100] ({-3+2*\x}, {-3-0.5*(\x-1)^2*(\x+1)^2 +0.3*(\x+0.5)^2 }) ;
\draw [dotted] plot [domain=-45:45,samples=100] ({-6+2*cos(\x)}, {-3+sin(\x)}) ;
\draw [dotted] plot [domain=135:225,samples=100] ({6+2*cos(\x)}, {-3+sin(\x)}) ;

\draw [->] (3.5,0) -- (3,-1.8) ;
\draw plot [domain=0:360,samples=100] ({3+2*cos(\x)}, {-3+sin(\x)}) ;
\draw [thick]
	plot [domain=-1.1:1.1,samples=100] ({3+2*\x}, {-3-0.5*(\x-1)^2*(\x+1)^2})
	plot [domain=-1:0.5,samples=100] ({3+2*\x}, {-3-0.5*(\x-1)^2*(\x+1)^2 +0.3*(\x-0.5)^2 }) ;

\draw (-3,-2.5) node{step $n$} (3,-2.5) node{step $n+1$} ;
\draw (-6.5,-1.1) node{$\gamma_{|]t,s[}$} (6.5,-1.1) node{$\gamma_{|]\tau,\sigma[}$} ;
\draw (-1,1.3) node{$\gamma_{|-]t,\sigma[}$} (-4,0.5) node{$\mathcal{T}$} (1,0.7) node{$\gamma_{|]s,\tau[}$} ;
\end{tikzpicture}

\end{center}
\caption{Tubular neighborhood and local graphs}
\label{tubulaire}
\end{figure}
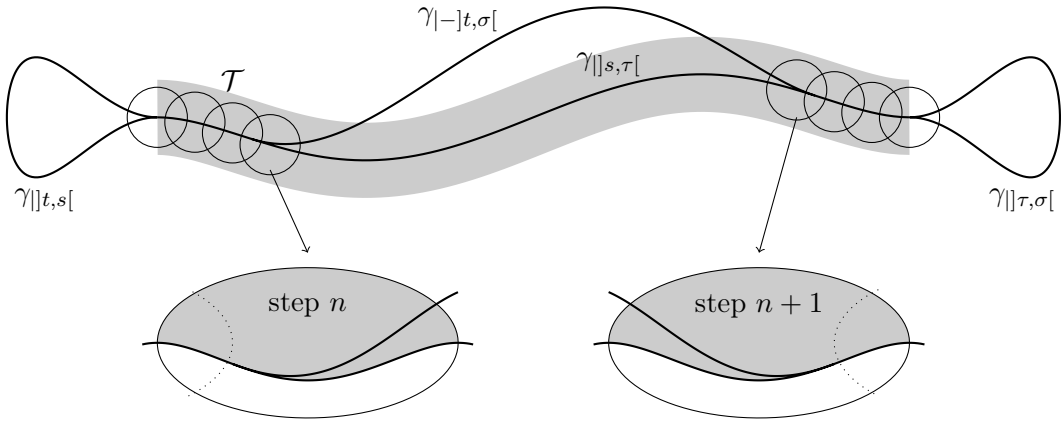
\end{proof}

In addition to the decomposition theorem (Theorem \ref{decomposition}), we can state the following result.

\begin{thm}\label{reparam}
Let $\gamma$ be a minimal curve for \eqref{problem}. There exists a parametrization of $\gamma$ such that
\[ \indc_\gamma\big( \RR\smallsetminus (\gamma) \big) \subset \{0,1\} . \]
Moreover, if $\OO$ is simply connected then
\[ \big\{ z \in \RR \ | \ \indc_\gamma(z) = 1 \big\} \subset \OO . \]
\end{thm}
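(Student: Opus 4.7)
My approach is to first apply Lemma~\ref{parametr} to fix a parametrization of $\gamma$ in which no two branches cross, then derive the index bound by approximating $\gamma$ by a Jordan curve and invoking the homotopy invariance of the winding number; the inclusion $\{z:\indc_\gamma(z)=1\}\subset\OO$ will then follow from the connectedness of $\RR\setminus\overline\OO$ when $\OO$ is simply connected.

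After the reparametrization of Lemma~\ref{parametr}, every self-contact point $\gamma(t)=\gamma(s)$ is a tangential touching at which the two local branches lie on the same side of their common tangent line. For each small $\delta>0$ I would construct a continuous, piecewise-$C^1$ perturbation $\gamma_\delta$ that agrees with $\gamma$ outside small neighborhoods of the contact set $\CC$ and, inside these neighborhoods, pushes one of the two branches by $\delta$ in the common normal direction. Because the branches do not cross, such a local push-off turns each tangential meeting into a clean separation without creating any new intersection, so $\gamma_\delta$ is a Jordan curve. Choosing the orientation so that its bounded component has winding number $+1$, the Jordan curve theorem gives $\indc_{\gamma_\delta}(\RR\setminus(\gamma_\delta))\subset\{0,1\}$. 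For any fixed $z\notin(\gamma)$, the straight-line homotopy $h_\sigma=(1-\sigma)\gamma+\sigma\gamma_\delta$ avoids $z$ as soon as $\delta$ is small enough, so by homotopy invariance of the winding number, $\indc_\gamma(z)=\indc_{\gamma_\delta}(z)\in\{0,1\}$.

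For the second assertion, since $\OO$ is bounded, Lipschitz and simply connected, its boundary $\od\OO$ is a Jordan curve and $\RR\setminus\overline\OO$ is a connected unbounded open set disjoint from $(\gamma)\subset\overline\OO$. It therefore lies in a single connected component of $\RR\setminus(\gamma)$, necessarily the unbounded one, on which the index vanishes. Consequently the open set $\{z\in\RR\setminus(\gamma):\indc_\gamma(z)=1\}$ is contained in $\overline\OO$, hence in the interior of $\overline\OO$, which equals $\OO$ by Lipschitz regularity of $\od\OO$.

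The main technical difficulty is the construction of the Jordan approximant $\gamma_\delta$, because Theorem~\ref{decomposition} does not prevent the contact set $\CC$ from being infinite: the two outer arcs $\gamma_{|]s,\tau[}$ and $\gamma_{|]\sigma,t[}$ may share a closed set of tangential touchings. One has to check that a single coherent normal push-off performed along both arcs separates \emph{all} such contacts simultaneously without introducing new crossings, and this is exactly where the non-crossing conclusion of Lemma~\ref{parametr}, together with the closedness of $\CC$ established in the proof of Theorem~\ref{decomposition}, is indispensable. An alternative, more direct route is to express $\indc_\gamma(z)$ as a signed count of transverse crossings with a generic ray from $z$, and to use the non-crossing property to identify this count with the winding number of a Jordan curve carrying the same geometric image up to an arbitrarily small perturbation.
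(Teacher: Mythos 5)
Your proposal is correct and follows essentially the same route as the paper: reparametrize via Lemma~\ref{parametr}, push the tangentially touching branches apart in the normal direction to obtain a nearby Jordan curve, and transfer the index bound back to $\gamma$ by homotopy invariance for each fixed $z\notin(\gamma)$; the second assertion is likewise handled by the connectedness of the complement of $\overline\OO$. The technical worry you raise about a possibly infinite contact set is resolved exactly as you suggest, by the finite cover of $\CC$ by local graph neighborhoods established in the proof of Lemma~\ref{parametr}, within each of which the two ordered branches can be separated simultaneously.
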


\begin{proof}
Using Lemma \ref{parametr}, locally near a point where two branches of $\gamma$ meet and separate in the same side, we can define a small homotopic perturbation of order $\ee$ in the normal direction that splits them away. Doing this on the finite number of local neighborhoods mentioned above, we get a simple curve $\gamma_\ee$ very close to $\gamma$. Up to reversing the parametrization, we can suppose that $\indc_{\gamma_\ee}$ belongs to $\{0,1\}$. Given any $z \in \RR \smallsetminus (\gamma)$, by compactness $d(z,(\gamma))>0$ and then, for $\ee$ small enough, $\gamma_\ee$ is homotopically equivalent to $\gamma$ in $\RR\smallsetminus \{ z\}$. Then $\indc_\gamma(z) \in \{0,1\}$.

For the second proposition of the theorem, as $(\gamma) \subset \overline{\OO}$ and $\OO$ is simply connected, $\gamma$ cannot circle any point $z \in \RR\smallsetminus \OO$.
\end{proof}

\noindent We show in Figure \ref{generic} a few generic examples of curves which satisfy the conclusions of both Theorems~\ref{decomposition} and~\ref{reparam}, and therefore are candidates for being minimal curves (depending on the confinement)

\begin{figure}[!ht]
\begin{center}

\subfloat[no self-contact]{
\begin{tikzpicture}
\filldraw [draw=black,fill=gray!40]
	plot [domain=0:360,samples=100] ({\x}:{1.2+0.2*cos(3*\x)}) ;
\end{tikzpicture}
}
\qquad
\subfloat[side by side]{
\begin{tikzpicture}
\filldraw [draw=black,fill=gray!40]
	plot [domain=1:179,samples=100]
		({0.1-5*(\x/180)*(1-\x/180)},{2*sin(2*\x)*(\x/180)*(1-\x/180)})
	plot [domain=1:179,samples=100]
		({-0.1+5*(\x/180)*(1-\x/180)},{2*sin(2*\x)*(\x/180)*(1-\x/180)}) ;
\end{tikzpicture}
}
\qquad
\subfloat[surrounded]{
\begin{tikzpicture}
\filldraw [draw=black,fill=gray!40]
	plot [domain=0:180,samples=100]
		({4+6*(\x/180)*(1-\x/180)},{-2+sin(2*\x)*(\x/180)*(1-\x/180)})
	plot [domain=0:180,samples=100]
		({4-2*sin(3*\x)},{-2-4*sin(2*\x)*(\x/180)*(1-\x/180)}) ;
\end{tikzpicture}
}

\subfloat[two self-contacts]{
\begin{tikzpicture}
\filldraw [draw=black,fill=gray!40]
	plot [domain=-1:1]
		({\x},{(\x-1)^2*(\x+1)^2})
	plot [domain=-1:1]
		({\x},{-(\x-1)^2*(\x+1)^2})
	plot [domain=0:180,samples=100]
		({1+2*sin(\x)}, {-4*sin(2*\x)*(\x/180)*(1-\x/180)})
	plot [domain=0:180,samples=100]
		({-1-2*sin(\x)}, {-4*sin(2*\x)*(\x/180)*(1-\x/180)}) ;
\end{tikzpicture}
}
\qquad
\subfloat[several self-contacts]{
\begin{tikzpicture}
\filldraw [draw=black,fill=gray!40]
	plot [domain=-1:1]
		({\x},{35*(\x-1)^4*(\x+1)^4*(\x-1/2)^2*(\x+1/2)^2*(\x)^2})
	plot [domain=-1:1]
		({\x},{-35*(\x-1)^4*(\x+1)^4*(\x-1/2)^2*(\x+1/2)^2*(\x)^2})
	plot [domain=0:180,samples=100]
		({1+2*sin(\x)}, {-4*sin(2*\x)*(\x/180)*(1-\x/180)})
	plot [domain=0:180,samples=100]
		({-1-2*sin(\x)}, {-4*sin(2*\x)*(\x/180)*(1-\x/180)}) ;
\end{tikzpicture}
}

\subfloat[continuum of self-contacts]{
\begin{tikzpicture}
\filldraw [draw=black,fill=gray!40]
	(-1,0) -- (1,0)
	plot [domain=0:180,samples=100]
		({1+2*sin(\x)}, {-4*sin(2*\x)*(\x/180)*(1-\x/180)})
	plot [domain=0:180,samples=100]
		({-1-2*sin(\x)}, {-4*sin(2*\x)*(\x/180)*(1-\x/180)}) ;
\end{tikzpicture}
}

\subfloat[drop inside - drop outside]{
\begin{tikzpicture}
\filldraw [draw=black,fill=gray!40]
	plot [domain=-1:1,samples=150]
		(\x,{(\x-1)^2*(-0.25*\x-0.5)}) --
	plot [domain=0:180,samples=100]
		({1+3*(\x/180)*(1-\x/180)},{sin(2*\x)*(\x/180)*(1-\x/180)}) --
	(1,0) -- (-1,0) --
	plot [domain=0:180,samples=100]
		({-1-3*(\x/180)*(1-\x/180)},{-sin(2*\x)*(\x/180)*(1-\x/180)}) --
	plot [domain=0:1,samples=150] ({-1+sqrt(0.25-(\x-0.5)^2)},\x) --
	plot [domain=-1:1,samples=150] ({-1-sqrt(1-\x*\x)},-\x) --
	cycle ;
\end{tikzpicture}
}
\qquad
\subfloat[both drops outside]{
\begin{tikzpicture}
\filldraw [draw=black,fill=gray!40]
	plot [domain=0:1,samples=150] ({-1+sqrt(0.25-(\x-0.5)^2)},\x) --
	plot [domain=-1:1,samples=150] ({-1-sqrt(1-\x*\x)},-\x) --
	plot [domain=-1:1] (\x,{(\x-1)^2*(-0.25*\x-0.5)}) --
	plot [domain=0:180,samples=100]
		({1+3*(\x/180)*(1-\x/180)},{sin(2*\x)*(\x/180)*(1-\x/180)}) --
	plot [domain=-1:0,samples=150] ({1-sqrt(0.25-(\x+0.5)^2)},-1-\x) --
	plot [domain=-1:1,samples=150] ({1+sqrt(1-\x*\x)},\x) --
	plot [domain=-1:1,samples=100] (-\x,{(-\x+1)^2*(0.25*\x+0.5)}) --
	plot [domain=0:180,samples=100]
		({-1-3*(\x/180)*(1-\x/180)},{-sin(2*\x)*(\x/180)*(1-\x/180)}) --
	cycle ;
\end{tikzpicture}
}

\subfloat[inner empty drop]{
\begin{tikzpicture}
\filldraw [draw=black,fill=gray!40]
	plot [domain=0:180,samples=100]
		({2+3*(\x/180)*(1-\x/180)},{-sin(2*\x)*(\x/180)*(1-\x/180)}) --
	plot[domain=-1:1,samples=100] ({(3-\x)/2},{0.4*(1-\x)^2*(1+\x)^2}) --
	plot[domain=-1:1,samples=100] ({(-\x)^2+(3/2)*(-\x)-3/2},{0.8*(1-\x)^2*(1+\x)^2}) --
	plot[domain=-1:1,samples=100] ({(-3+\x)/2},{0.4*(1-\x)^2*(1+\x)^2}) --
	plot [domain=0:180,samples=100]
		({-1+3*(\x/180)*(1-\x/180)},{sin(2*\x)*(\x/180)*(1-\x/180)}) --
	plot[domain=-1:1,samples=100] ({(-3-\x)/2},{-0.4*(1-\x)^2*(1+\x)^2}) --
	plot[domain=-1:1,samples=100] ({(\x)^2+(3/2)*(\x)-3/2},{-0.8*(1-\x)^2*(1+\x)^2}) --
	plot[domain=-1:1,samples=100] ({(3+\x)/2},{-0.4*(1-\x)^2*(1+\x)^2}) --
	cycle ;
\end{tikzpicture}
}
\qquad
\subfloat[inner full drop]{
\begin{tikzpicture}
\filldraw [draw=black,fill=gray!40]
	plot [domain=0:180,samples=100]
		({1+3*(\x/180)*(1-\x/180)},{-sin(2*\x)*(\x/180)*(1-\x/180)}) --
	plot[domain=-1:1,samples=100] ({(-\x)^2+(3/2)*(-\x)-3/2},{0.8*(1-\x)^2*(1+\x)^2}) --
	plot[domain=-1:1,samples=100] ({-3/2-(1/2)*((-\x)^2+(3/2)*(-\x)-3/2)},{0.4*(1-\x)^2*(1+\x)^2}) --
	plot [domain=0:180,samples=100]
		({-1/2-3*(\x/180)*(1-\x/180)},{sin(2*\x)*(\x/180)*(1-\x/180)}) --
	plot[domain=-1:1,samples=100] ({-3/2-(1/2)*((\x)^2+(3/2)*(\x)-3/2)},{-0.4*(1-\x)^2*(1+\x)^2}) --
	plot[domain=-1:1,samples=100] ({(\x)^2+(3/2)*(\x)-3/2},{-0.8*(1-\x)^2*(1+\x)^2}) --
	cycle ;
\end{tikzpicture}
}

\end{center}
\caption{Generic curves satisfying the conclusions of Theorems~\ref{decomposition} and~\ref{reparam}}
\label{generic}
\end{figure}
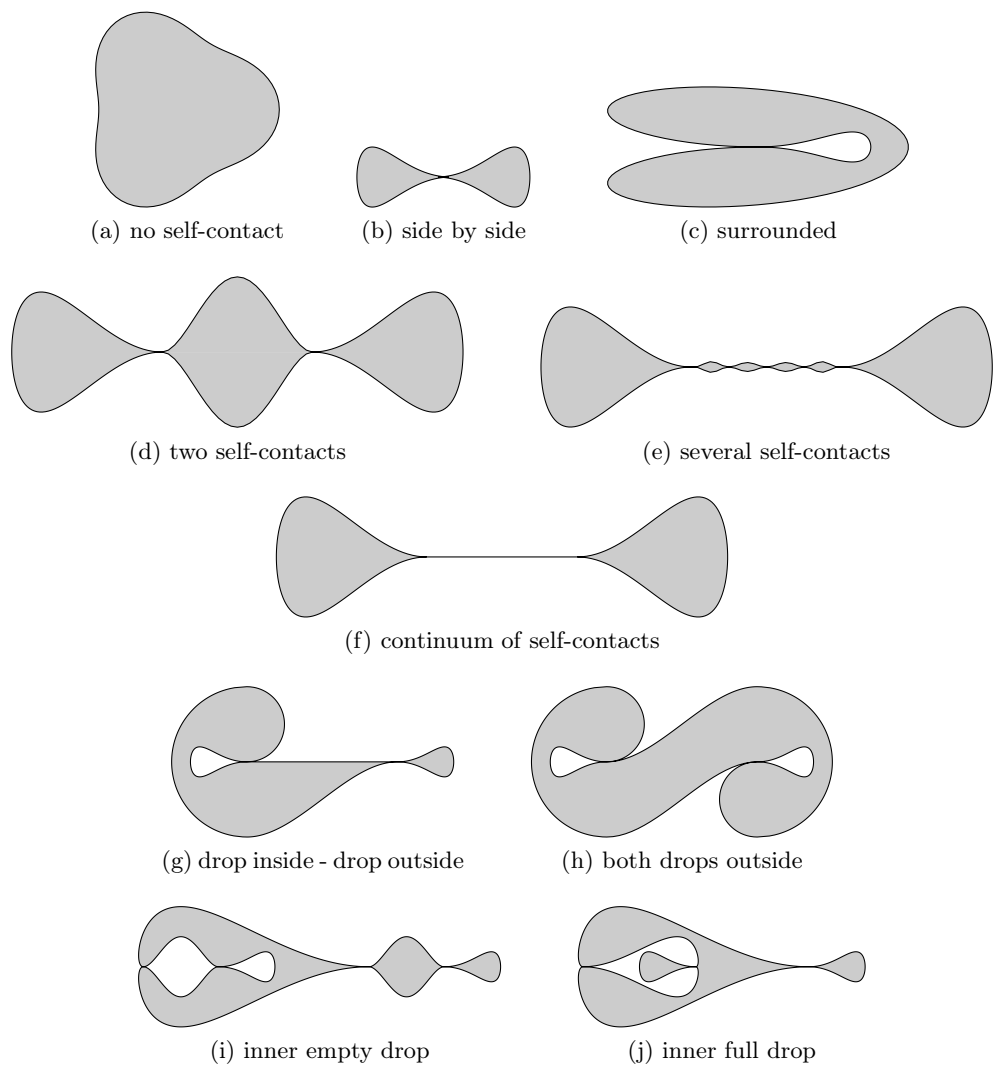


\section{Regularity} \label{regularite}

This section is devoted to the calculation of the Euler equation of the energy $\WW$, and to the regularity of minimal curves of \eqref{problem}. In particular, we prove carefully that if $\gamma$ is a minimal curve then $d\WW(\gamma)$ is a Radon measure, from which bootstrap arguments yield more regularity, out of contact points.

\begin{propo}[First variation] \label{variation premiere}
Let  $\gamma : [0,L] \to \RR $ be a regular curve with arc length parametrization, for all $\delta \in C^{\infty}([0,L],\RR)$ and for $\ee>0$, we have
\[ \WW(\gamma+\ee\delta) = \WW(\gamma) +
\ee \int_0^L \left( 2\gamma''\cdot \delta'' - 3 (\gamma'\cdot \delta')|\gamma''|^2 \right) \ud s
+o(\ee) . \]
\end{propo}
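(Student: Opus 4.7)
The plan is to work from the parametrization-invariant form of the bending energy rather than directly from \eqref{expression1}, because $\gamma+\ee\delta$ is not arc-length parametrized even when $\gamma$ is. For any regular $H^2$-curve $\eta$ one has
\[ \WW(\eta)=\int \frac{|\eta'|^2|\eta''|^2-(\eta'\cdot\eta'')^2}{|\eta'|^5}\,dt, \]
obtained by combining the curvature vector $\vec\kappa=\bigl(|\eta'|^2\eta''-(\eta'\cdot\eta'')\eta'\bigr)/|\eta'|^4$ with the arc-length element $ds=|\eta'|\,dt$. On an arc-length parametrized $\gamma$ this reduces to $\int|\gamma''|^2\,ds$, confirming consistency.

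Then I would set $\eta=\gamma+\ee\delta$ and Taylor-expand each ingredient in $\ee$, using $|\gamma'|=1$ and $\gamma'\cdot\gamma''=0$:
\[ |\eta'|^2=1+2\ee\,\gamma'\cdot\delta'+O(\ee^2),\qquad |\eta'|^{-5}=1-5\ee\,\gamma'\cdot\delta'+O(\ee^2), \]
\[ |\eta''|^2=|\gamma''|^2+2\ee\,\gamma''\cdot\delta''+O(\ee^2),\qquad \eta'\cdot\eta''=\ee(\gamma'\cdot\delta''+\gamma''\cdot\delta')+O(\ee^2). \]
In particular $(\eta'\cdot\eta'')^2=O(\ee^2)$ contributes nothing at first order. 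The numerator expands as $|\gamma''|^2+2\ee\bigl(\gamma''\cdot\delta''+|\gamma''|^2\gamma'\cdot\delta'\bigr)+O(\ee^2)$, and multiplying by $|\eta'|^{-5}$ gives
\[ \frac{|\eta'|^2|\eta''|^2-(\eta'\cdot\eta'')^2}{|\eta'|^5}=|\gamma''|^2+\ee\bigl(2\gamma''\cdot\delta''-3|\gamma''|^2\gamma'\cdot\delta'\bigr)+O(\ee^2), \]
the coefficient $-3=2-5$ arising from the $2|\gamma''|^2\gamma'\cdot\delta'$ in the numerator combined with the $-5|\gamma''|^2\gamma'\cdot\delta'$ from $|\eta'|^{-5}$. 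Integrating on $[0,L]$ yields the stated formula.

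The only point needing a line of justification is that the integrated remainder is $o(\ee)$ rather than just pointwise $O(\ee^2)$. Since $\delta\in C^{\infty}([0,L],\RR)$, the quantities $\delta'$ and $\delta''$ are uniformly bounded, while $\gamma'',\gamma'\in L^2$; hence each term in the $O(\ee^2)$ remainder, being at most quadratic in $\gamma''$ and polynomial in bounded quantities, is dominated by a fixed $L^1$ function times $\ee^2$. Provided $\ee$ is small enough that $|\eta'|\geq 1/2$ on $[0,L]$ (ensured by $|\gamma'|\equiv 1$ and boundedness of $\delta'$), the Taylor expansion of $|\eta'|^{-5}$ is also uniformly controlled, and dominated convergence delivers the $o(\ee)$ bound after integration. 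No step is a real obstacle: the argument is a routine first-order expansion, the only genuine subtlety being the need for the parametrization-invariant formulation of $\WW$ to handle the fact that the perturbation destroys the arc-length parametrization.
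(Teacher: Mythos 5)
Your computation is correct and is exactly the ``classical first order expansion calculation'' that the paper's proof consists of in one line, with the remainder controlled precisely as the paper indicates via $\|\gamma''\|_{L^2}<+\infty$ (plus the uniform bounds on $\delta',\delta''$ and the lower bound $|\eta'|\geq 1/2$). Your only addition is to make explicit the parametrization-invariant form $\WW(\eta)=\int\bigl(|\eta'|^2|\eta''|^2-(\eta'\cdot\eta'')^2\bigr)|\eta'|^{-5}\,\ud t$ needed because $\gamma+\ee\delta$ is no longer arc-length parametrized, a point the paper leaves implicit; the coefficient $-3=2-5$ comes out correctly.
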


\begin{proof}
It is a classical first order expansion calculation, second order terms in $\ee$ remain bounded thanks to $||\gamma''||_{L^2} < +\infty $.
\end{proof}

\begin{notation}
Using the weak derivatives, we can write the first variation in distributional sense: $\forall \delta \in C^{\infty}([0,L],\RR)$,
\[ \ud \WW(\gamma).\delta = 
\int_0^L \left( 2\gamma''\cdot \delta'' - 3 (\gamma'\cdot \delta')|\gamma''|^2 \right) \ud s
= \int_0^L \left( 2\gamma^{[4]}+ 3 (|\gamma''|^2 \gamma')' \right) \cdot \delta \ud s .
\]
\end{notation}

\noindent Let us first sketch how we will study the regularity of minimal curves: without restrictions on the perturbations, we get from classical regularity theory that the minimality of $\gamma$ implies that
\[ \ud \WW(\gamma).\delta \geqslant 0 \quad \text{for all perturbations } \delta \in C^{\infty}([0,L],\RR) \]
from which we deduce that $\ud \WW(\gamma) = 0$. Then, a bootstrap argument yields that $\gamma$ is smooth. This argument, however, is valid only locally on the free parts of $\gamma$, i.e. out of contact or self-contact points. For self-intersections and contacts with $\od \OO$, we can allow local perturbations only in one direction. Then we get that $\ud \WW(\gamma)$ is a measure, and this yields
higher regularity than $H^2$. In the following, we describe these arguments with more details in a local neighborhood $\mathcal{U}$ of an arbitrary point of a minimal curve $\gamma$.

\subsection{Admissible perturbations}

Let $\gamma$ be a minimal curve and $\delta \in C^{\infty}(\cer,\RR)$ a perturbation. We have to deal with the contact with $\od \OO$ and self-intersections. We say that $\delta$ is an admissible perturbation if, for $\ee>0$ small enough, $\gamma+\ee\delta \in \adm$. As we have two constraints in $\adm$ (confinement in $\OO$ and tangential self-intersections), we will sometimes refer to only one of these constraints.

Actually, we only have to take care of self-intersections around the particular self-contact couples given by Theorem \ref{decomposition} (see the definition below). Indeed, suppose $\gamma_{|]s,\tau[}$ and $\gamma_{|]\sigma,t[}$ are not empty, according to Theorem \ref{decomposition} they have the same energy. If $\delta$ is a perturbation of $\gamma_{|]s,\tau[}$ admissible with respect to the confinement leading to a decrease of energy:
\[ \WW(\gamma_{|]s,\tau[} + \delta) < \WW(\gamma_{|]s,\tau[}) , \]
then we can replace $\gamma_{|]\sigma,t[}$ by $\gamma_{-|]s,\tau[}$ and expand the perturbation to $\gamma_{|]\sigma,t[}$ by taking $\delta(t)=\delta(s)$ for all $t,s$ such that $\gamma(t)=\gamma(s)$. Denoting $\tilde{\gamma}$ the new curve and $\tilde{\delta}$ the new perturbation, $\tilde{\delta}$ is a admissible perturbation for $\tilde{\gamma}$ (with respect to both constraints). Then we obtain an admissible curve with strictly less energy than $\gamma$, which is impossible. Therefore, the perturbations must be restricted only along the confinement boundary and near the particular self-contact couples (see below).

\begin{Def}[Particular self-contact points]
The particular self-contact points of a minimal curve $\gamma\in\adm$ are:
\begin{itemize}
\item the unique self contact point of $\gamma$ if it exists,
\item both self-contact points $(t,s)$ and $(\tau,\sigma)$ given by Theorem \ref{decomposition} if $\gamma$ admits at least two distinct self-contact points.
\end{itemize}
Obviously, a simple curve has no particular self-contact point.
\end{Def}

Let $\nu$ denote a smooth vector field parametrized on $\cer$ and supported on $\gamma$ (i.e. $\nu(s)=\tilde\nu(\gamma(s))$), and such that $\nu$ vanishes away from $\mathcal{U}$. We take $\nu=0$ locally around a free part (i.e. no contact with $\od \OO$ and far away from particular self-contact points). Otherwise, we use the parametrization of Lemma \ref{parametr} and we write both branches of $\gamma$ (recall from Theorem~\ref{decomposition} that there cannot be more than two branches) and $\od \OO\cap{\mathcal U}$ as graphs of functions in $\mathcal{U}$. Thanks to this parametrization, we have that both branches and $\od \OO$ are ordered in $\mathcal{U}$.
If we have only a particular self-couple, we choose $\nu$ so that it heads north on the above branch and it heads south on the below branch (see Figure \ref{perturbation 1}). In addition, we assume without loss of generality that $|\nu|=1$ strictly within $\mathcal{U}$. If locally $\gamma$ makes contacts only with $\od \OO$ then $\nu$ is built so that it heads toward the inner $\od \OO$ (see Figure \ref{perturbation 2}). Again, we let $|\nu|=1$ strictly within $\mathcal{U}$. The last case is when we have both a self-contact and a contact with $\Omega$, and without loss of generality we may assume that $\od\OO$ is below both branches of $\gamma$. In such case, we choose $\nu$ so that it heads north both for the top and above branches of $\gamma$. Then we set, strictly within $\mathcal{U}$,  $|\nu|=2$ on the above branch and $|\nu|=1$ on the below branch, and we impose $|\nu_{below}| \leqslant |\nu_{above}|$ in $\mathcal{U}$ (see Figure \ref{perturbation 3}) where, denoting as $\gamma_{below}$ and $\gamma_{above}$ both branches of $\gamma$ and considering $\tilde\nu$ as before, we define $\nu_{below/above}=\tilde\nu(\gamma_{below/above})$.

\begin{figure}[!ht]
\begin{center}
\subfloat[self-contact \label{perturbation 1}]{
\begin{tikzpicture}
\draw [thick] plot [domain=-2:2, samples=100] (\x,{2.5+(1/9)*(\x*\x)}) ;
\draw [thick] plot [domain=-2:2, samples=100] (\x,{2.5-(2/9)*(\x*\x)}) ;
\foreach \t in {-1.8,-1.6,...,1.8}
	{\draw [->] (\t,{2.5+(1/9)*(\t*\t)}) -- (\t,{0.5+2.5+(1/9)*(\t*\t)}) ; }
\foreach \t in {-1.8,-1.6,...,1.8}
	{\draw [->] (\t,{2.5-(2/9)*(\t*\t)}) -- (\t,{-0.5+2.5-(2/9)*(\t*\t)}) ; }
\draw (2.5,3) node{$\gamma_{above}$}
	(2.5,1.5) node{$\gamma_{below}$} ;
\end{tikzpicture}
}
~
\subfloat[contact with the boundary \label{perturbation 2}]{
\begin{tikzpicture}
\draw [thick] plot [domain=-2:2, samples=100] (\x,{2.5+(1/9)*(\x*\x)}) ;
\draw [thick] plot [domain=-2:2, samples=100] (\x,{2.5-(1/9)*(\x*\x)}) ;
\foreach \t in {-1.8,-1.6,...,1.8}
	{\draw [->] (\t,{2.5+(1/9)*(\t*\t)}) -- (\t,{0.5+2.5+(1/9)*(\t*\t)}) ; }
\draw (2.5,3) node{$\gamma$}
	(2.5,2) node{$\od \OO$} ;
\end{tikzpicture}
}
~
\subfloat[both contacts \label{perturbation 3}]{
\begin{tikzpicture}
\draw [thick] plot [domain=-2:2, samples=100] (\x,{2.5+(1/9)*(\x*\x)}) ;
\draw [thick] plot [domain=-2:2, samples=100] (\x,{2.5-(1/9)*(\x*\x)}) ;
\draw [thick] plot [domain=-2:2, samples=100] (\x,{2.5-(2/9)*(\x*\x)}) ;
\foreach \t in {-1.8,-1.6,...,1.8}
	{\draw [->] (\t,{2.5+(1/9)*(\t*\t)}) -- (\t,{1+2.5+(1/9)*(\t*\t)}) ; }
\foreach \t in {-1.7,-1.5,...,1.7}
	{\draw [->] (\t,{2.5-(1/9)*(\t*\t)}) -- (\t,{0.5+2.5-(1/9)*(\t*\t)}) ; }
\draw (2.5,3) node{$\gamma_{above}$}
	(2.5,2) node{$\gamma_{below}$}
	(2.5,1.5) node{$\od \OO$} ;
\end{tikzpicture}
}
\end{center}
\caption{Direction of the perturbation}
\label{vec}
\end{figure}
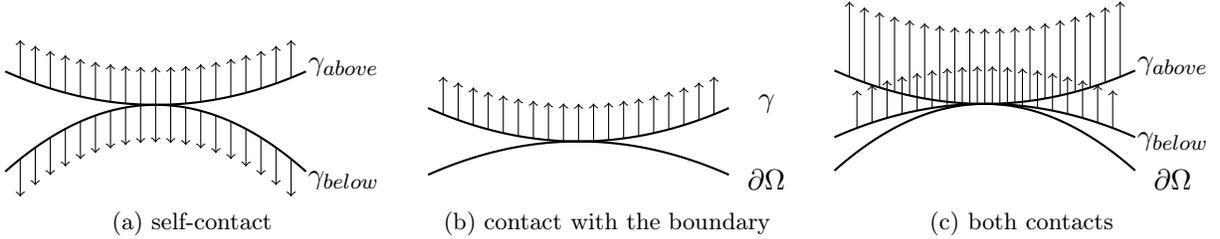

Once $\nu$ is defined as above, for all perturbations $\delta \in C^{\infty}(\cer,\RR)$, we define
\[ \delta_1 = \delta + 2||\delta||_{\infty}\nu \]
and \[ \delta_2 = -\delta + 2||\delta||_{\infty}\nu . \]
The construction of $\nu$ forces $\delta_1$ and $\delta_2$ to be admissible and then
\[ \ud \WW(\gamma).\delta_1 \geqslant 0 \quad \text{and} \quad \ud \WW(\gamma).\delta_2 \geqslant 0 .\]
Finally
\[ |\ud \WW(\gamma).\delta| \leqslant
\big( 2 |\ud \WW(\gamma).\nu|\big) ||\delta||_{\infty} . \]
and the following proposition is proved:
\begin{propo}\label{proBV}
Let $\gamma$ be a minimal curve for~\eqref{problem}, then $\ud \WW(\gamma)$ is a $\RR-$valued Radon measure.
\end{propo}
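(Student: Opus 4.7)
The plan is to apply Riesz's representation theorem: if the linear functional $\delta\mapsto\ud\WW(\gamma)\cdot\delta$ on $C^\infty(\cer,\RR)$ is bounded with respect to the sup norm, then it extends uniquely to a continuous linear functional on $C^0(\cer,\RR)$, and hence is represented by an $\RR$-valued (vector) Radon measure on $\cer$. So the task reduces to proving an a priori estimate
\[ |\ud\WW(\gamma)\cdot\delta|\ \leqslant\ C\,\|\delta\|_\infty \qquad \forall\,\delta\in C^\infty(\cer,\RR), \]
with a constant $C$ independent of $\delta$.

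The mechanism for obtaining such a bound has already been built in the paragraphs above the statement. Given a smooth vector field $\nu$ supported in a local neighborhood $\mathcal{U}$ of a point of $\gamma$, constructed so that $\gamma+\ee\nu$ remains admissible for small $\ee>0$ (pushing branches apart at a self-contact, moving off $\od\OO$ at a boundary contact, or handling both situations simultaneously with the ordering $|\nu_{below}|\leqslant|\nu_{above}|$), one sets $\delta_1=\delta+2\|\delta\|_\infty\nu$ and $\delta_2=-\delta+2\|\delta\|_\infty\nu$. The magnitudes $|\nu|\geqslant 1$ inside $\mathcal{U}$ imply that both $\delta_1$ and $\delta_2$ are admissible perturbations for $\ee$ small enough. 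By minimality, $\ud\WW(\gamma)\cdot\delta_i\geqslant 0$ for $i=1,2$, and combining these two inequalities by linearity gives
\[ |\ud\WW(\gamma)\cdot\delta|\ \leqslant\ 2\,|\ud\WW(\gamma)\cdot\nu|\,\|\delta\|_\infty \]
for every $\delta\in C^\infty(\cer,\RR)$ supported in $\mathcal{U}$.

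To turn this into a global bound, I would cover $\cer$ by finitely many such neighborhoods: by Theorem~\ref{decomposition} the particular self-contact points of $\gamma$ are finite in number, and the compact set of boundary-contact times can be covered by finitely many local-graph neighborhoods. On the remaining free arcs one sets $\nu=0$: both $\delta$ and $-\delta$ are then themselves admissible perturbations, so minimality forces $\ud\WW(\gamma)\cdot\delta=0$ on free arcs (the usual Euler--Lagrange condition). A subordinate smooth partition of unity decomposes any test $\delta$ into pieces, the local estimate applies to each, and summing produces a uniform constant $C$; Riesz then identifies $\ud\WW(\gamma)$ with an $\RR$-valued Radon measure on $\cer$.

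The main delicacy I foresee is the rigorous verification that $\delta_1$ and $\delta_2$ are admissible perturbations --- specifically, preserving the graph ordering of the two branches of $\gamma$ (and of $\gamma$ above $\od\OO$) after the perturbation $\gamma+\ee\delta_i$. This relies on Lemma~\ref{parametr} for the local graph parametrization, on the strict inequality $|\nu|\geqslant 1$ inside $\mathcal{U}$ to absorb both $\pm\delta$ and the $o(\ee)$ remainder from Proposition~\ref{variation premiere}, and --- in the joint contact case --- on the condition $|\nu_{below}|\leqslant|\nu_{above}|$ so that the two branches do not collide after perturbation.
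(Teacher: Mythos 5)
Your proposal is correct and follows essentially the same route as the paper: the same one-sided admissible directions $\nu$, the same trick $\delta_1=\delta+2\|\delta\|_\infty\nu$, $\delta_2=-\delta+2\|\delta\|_\infty\nu$ yielding $|\ud\WW(\gamma).\delta|\leqslant 2|\ud\WW(\gamma).\nu|\,\|\delta\|_\infty$, and the identification of the resulting bounded functional with a Radon measure. Your explicit globalization via a finite cover and partition of unity, and the remark that $\ud\WW(\gamma).\delta=0$ on free arcs, only make explicit what the paper leaves implicit.
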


\begin{rmq}
On free parts (away from contact with $\od \OO$ and particular self-contact points), we have $\nu = 0$ and then, for any local perturbation $\delta$, $\ud \WW(\gamma).\delta = 0$, thus $\ud \WW(\gamma)=0$.
\end{rmq}

\subsection{Regularity improvement}

By definition, a minimal curve $\gamma$ belongs to $H^2$. However, as $\ud \WW(\gamma) = 2\gamma^{[4]}+ 3 (|\gamma''|^2 \gamma')'$ is a measure, we will prove that $\gamma \in W^{3,\infty}$ using the theory of $BV$ functions (see \cite{Ambrosio2000}). We first recall  a classical result in functional analysis:

\begin{lem} \label{derive}
If $u,v \in BV(\cer,\RR)$ and $\lambda \in BV(\cer,\mathbb{R})$ then $u\cdot v$ and $\lambda u$ belong to $BV(\cer,\mathbb{R})$. Moreover, if $u,v \in W^{1,\infty}(\cer,\RR)$ then $(uv)' \in L^{\infty}$. In both case, the Leibniz's rule is valid.
\end{lem}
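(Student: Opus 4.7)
The plan is to reduce the lemma to the smooth case via mollification, using the crucial one-dimensional embedding $BV(\cer)\hookrightarrow L^\infty(\cer)$: in dimension one any function of bounded variation admits a bounded representative with $\|u\|_{L^\infty}\le C\|u\|_{BV}$. As a consequence, the pointwise products $u\cdot v$ and $\lambda u$ are automatically bounded and measurable, so only the finiteness of their total variation and the validity of Leibniz's rule require work.

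First I would regularise $u,v,\lambda$ by periodic convolution against a smooth mollifier, obtaining $u_\ee, v_\ee, \lambda_\ee \in C^\infty(\cer)$ that converge to the respective limits in $L^1$ and almost everywhere, stay uniformly bounded in $L^\infty$, and satisfy $\mathrm{Var}(u_\ee)\le\mathrm{Var}(u)$ (and similarly for $v_\ee, \lambda_\ee$). For smooth functions the classical Leibniz rule gives
\[ \|(u_\ee\cdot v_\ee)'\|_{L^1} \le \|u_\ee\|_{L^\infty}\,\mathrm{Var}(v_\ee)+\|v_\ee\|_{L^\infty}\,\mathrm{Var}(u_\ee) \le C\,\|u\|_{BV}\|v\|_{BV}, \]
and an identical estimate holds for $\lambda_\ee u_\ee$. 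Letting $\ee\to 0$ and invoking the lower semicontinuity of the total variation with respect to $L^1$ convergence, I conclude that $u\cdot v$ and $\lambda u$ belong to $BV(\cer)$, with the natural product estimate.

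To establish Leibniz's rule I would pass to the limit in the smooth identity
\[ (u_\ee\cdot v_\ee)' = u_\ee'\cdot v_\ee + u_\ee\cdot v_\ee' \]
in the sense of Radon measures on $\cer$, interpreting each term on the right-hand side via the precise representative of the $BV$ factor (the pointwise average of its left and right limits at the at most countable jump set), so that no mass is lost at atoms; this is the standard $BV$ product rule recalled in~\cite{Ambrosio2000}. When in addition $u,v\in W^{1,\infty}(\cer,\RR)$, the distributional derivatives $u', v'$ are represented by essentially bounded functions and coincide almost everywhere with the classical pointwise derivatives; no jump terms survive, so $(u\cdot v)' = u'\cdot v + u\cdot v'$ holds as an $L^\infty$ identity, proving the second assertion.

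The only delicate point is the interpretation of the product of a $BV$ function against the distributional derivative of another at jump points, which is precisely why Leibniz's rule must be phrased through precise representatives. The $W^{1,\infty}$ case sidesteps this subtlety entirely since Lipschitz functions are continuous, and this is the only case that will be used afterwards in the regularity bootstrap for the minimal curve, so the content of the lemma effectively reduces to the classical product estimate combined with the pointwise Leibniz identity.
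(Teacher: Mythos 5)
Your proof is correct. Note that the paper itself does not argue from first principles: its ``proof'' consists of combining, componentwise, two quoted facts --- the $BV$ product rule of \cite[Ex. 3.97]{Ambrosio2000} and the $L^\infty\cap W^{1,p}$ product rule of \cite[\S 4.2.2, Thm. 4]{Evans1992}. What you have written is essentially the standard proof of those cited results rather than an appeal to them: mollification, the uniform bound $\|(u_\ee v_\ee)'\|_{L^1}\le \|u_\ee\|_{L^\infty}\,\mathrm{Var}(v_\ee)+\|v_\ee\|_{L^\infty}\,\mathrm{Var}(u_\ee)$ made possible by the one-dimensional embedding $BV(\cer)\hookrightarrow L^\infty(\cer)$, lower semicontinuity of the total variation under $L^1$ convergence to conclude $uv\in BV$, and precise representatives to identify the limit of $u_\ee'v_\ee$ as a measure. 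This buys self-containedness at the price of length. The one genuinely delicate point --- that $v_\ee(x)$ converges at a jump to the average of the one-sided limits only for a symmetric mollifier, which is exactly why Leibniz's rule must be stated through precise representatives --- is the content of the exercise the paper cites, and you flag it correctly. You also rightly observe that the $W^{1,\infty}$ case, the only one used in the subsequent bootstrap, sidesteps the jump issue entirely since the factors are continuous and their distributional derivatives are bounded functions.
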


\begin{proof}
Combine the two following results on the components of $u$ and $v$:
\begin {itemize}
	\item
		If $u,v \in BV([a,b],\mathbb{R})$ then $uv$ is a function of bounded variation and, up to a choice of an appropriate representative, in the measure sense
		\[ (uv)'=u'v+uv' . \]
		See \cite[ex 3.97]{Ambrosio2000}.
	\item
		Let $p\in[1,+\infty]$, if $u,v \in L^{\infty}\cap W^{1,p}([a,b],\mathbb{R})$ then $uv$ is a function of $W^{1,p}$ and
		\[ (uv)'=u'v+uv' . \]
		See \cite[4.2.2 - Theorem 4]{Evans1992}.
\end{itemize}
\end{proof}

\noindent We now state the main result of this section, that is the regularity of minimizers.

\begin{thm}
Let $\gamma$ be a minimal curve, we have
\[ \gamma \in W^{3,\infty}(\cer,\RR), \]
\[ \gamma^{[3]} \in BV(\cer,\RR) \]
and \[ \kappa' \in BV(\cer,\mathbb{R}) . \]
Moreover, $\gamma$ is $C^\infty$ on every open interval of $\cer$ away from particular self-contact couples and from contact points with $\od \OO$.
\end{thm}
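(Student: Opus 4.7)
The plan is to extract pointwise regularity from the single BV identity given by Proposition~\ref{proBV}, via an algebraic decoupling trick that uses the unit-speed constraint, and then to bootstrap on the open pieces of $\gamma$ where the Euler equation truly holds as an ODE. Throughout I work in the arc-length parametrization on $[0,L]$; the reparametrization from constant speed to arc-length is a smooth linear change of variable and preserves all relevant function spaces.

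From Proposition~\ref{proBV}, the distribution $\mu := 2\gamma^{[4]} + 3(|\gamma''|^2\gamma')'$ is a finite Radon measure, so its primitive
\[
u \,:=\, 2\gamma^{[3]} + 3|\gamma''|^2\gamma'
\]
belongs to $BV(\cer,\RR)$. The key observation is that differentiating $|\gamma'|^2\equiv 1$ twice yields the pointwise identities $\gamma'\cdot\gamma''=0$ and $\gamma'\cdot\gamma^{[3]}=-|\gamma''|^2$, whence
\[
u\cdot\gamma' \,=\, 2\gamma'\cdot\gamma^{[3]} + 3|\gamma''|^2 \,=\, |\gamma''|^2.
\]
Since $\gamma\in H^2$ gives $\gamma'\in H^1\subset L^\infty\cap BV$, Lemma~\ref{derive} applied componentwise to the scalar product forces $|\gamma''|^2\in BV(\cer,\mathbb{R})$, and in particular $\gamma''\in L^\infty$. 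A second application to the product $|\gamma''|^2\gamma'$ (scalar BV times vector BV) then gives $|\gamma''|^2\gamma'\in BV$, so
\[
\gamma^{[3]} \,=\, \tfrac{1}{2}\bigl(u-3|\gamma''|^2\gamma'\bigr) \in BV(\cer,\RR) \subset L^\infty,
\]
which proves $\gamma\in W^{3,\infty}$ and $\gamma^{[3]}\in BV$. I expect this decoupling step, via the scalar contraction $u\cdot\gamma'$, to be the real content of the theorem: it is the only place where a vectorial BV relation with a nonlinear coupling is converted into pointwise scalar control on $|\gamma''|^2$.

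For the statement on $\kappa'$, let $N$ be the unit normal obtained by rotating $\gamma'$ by $\pi/2$. The plane Frenet formulas give $\gamma''=\kappa N$ and $N'=-\kappa\gamma'$, so
\[
\kappa' \,=\, (\gamma''\cdot N)' \,=\, \gamma^{[3]}\cdot N + \gamma''\cdot N' \,=\, \gamma^{[3]}\cdot N.
\]
Having already established $\gamma\in W^{3,\infty}$, we have $N\in W^{2,\infty}\subset BV$, and Lemma~\ref{derive} once more yields $\kappa'\in BV(\cer,\mathbb{R})$.

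Finally, let $I\subset\cer$ be an open interval avoiding particular self-contact couples and contact points with $\od\OO$. By the remark following Proposition~\ref{proBV}, the measure $\ud\WW(\gamma)$ vanishes on $I$, so $u$ is an $\RR$-valued constant $v_0$ there. The identity $u\cdot\gamma'=|\gamma''|^2$ then reads $|\gamma''|^2=v_0\cdot\gamma'$ on $I$, and
\[
\gamma^{[3]} \,=\, \tfrac{1}{2}\bigl(v_0-3(v_0\cdot\gamma')\gamma'\bigr)
\]
on $I$. This is a closed polynomial expression in $\gamma'$, so a standard ODE bootstrap — each regularity gain on $\gamma'$ passes through the right-hand side to a regularity gain on $\gamma^{[3]}$, hence on $\gamma$ — starting from $\gamma\in C^{1,\alpha}$ produces $\gamma\in C^\infty(I)$, which concludes the argument.
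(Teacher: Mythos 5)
Your proof is correct, and its overall skeleton coincides with the paper's: pass from the Radon-measure first variation to the $BV$ function $u=2\gamma^{[3]}+3|\gamma''|^2\gamma'$, upgrade the regularity of $\gamma''$ and $\gamma^{[3]}$ via the product rule of Lemma~\ref{derive}, read off $\kappa'$ from the Frenet contraction, and bootstrap on free arcs where $u$ is constant. Where you genuinely diverge is in the central decoupling step. The paper's (very terse) route iterates through the one-dimensional embedding chain: from $u\in BV\subset L^\infty$ and $|\gamma''|^2\gamma'\in L^1$ one first gets $\gamma^{[3]}\in L^1$, hence $\gamma''\in W^{1,1}(\cer)\hookrightarrow L^\infty$, and only then applies Leibniz to conclude $|\gamma''|^2\gamma'\in BV$ and $\gamma^{[3]}\in BV$. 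You instead contract $u$ with $\gamma'$ and exploit the unit-speed identities $\gamma'\cdot\gamma''=0$, $\gamma'\cdot\gamma^{[3]}=-|\gamma''|^2$ to obtain $u\cdot\gamma'=|\gamma''|^2\in BV$ in a single stroke; this makes the mechanism by which the vectorial $BV$ relation controls the nonlinear term completely transparent, and it also hands you the closed-form expression $\gamma^{[3]}=\tfrac12(v_0-3(v_0\cdot\gamma')\gamma')$ that makes the smoothness bootstrap on free arcs explicit, something the paper only asserts. The one point you should make explicit is the order of operations: the identity $\gamma'\cdot\gamma^{[3]}=-|\gamma''|^2$ is only formal while $\gamma^{[3]}$ is a mere distribution, so you should first record that $\gamma^{[3]}=\tfrac12(u-3|\gamma''|^2\gamma')\in L^1$ (since $u\in BV\subset L^1$ and $|\gamma''|^2\gamma'\in L^1$), after which the second differentiation of $|\gamma'|^2\equiv 1$ holds almost everywhere and your contraction is legitimate. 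This is a one-line addition, not a gap in the substance of the argument.
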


\begin{proof}
The result follows from the fact that 
$2\gamma^{[3]}+ 3 (|\gamma''|^2 \gamma')$ is a $BV$ function
thanks to Proposition \ref{proBV}. 
Then, we use Lemma \ref{derive} and the following embeddings for functions $\cer \to \RR$:
\[ W^{2,\infty} \hookrightarrow W^{1,\infty} \hookrightarrow W^{1,1} \hookrightarrow BV \hookrightarrow L^{\infty} \hookrightarrow L^1 . \]

For the curvature, using arc length parametrization, we have $\kappa = \gamma''\cdot (\gamma')^{\perp}$ (where $(\gamma')^{\perp}$ stands for the unit orthogonal vector to $\gamma'$ in the counterclokwise sense) and then, applying Lemma \ref{derive}, we get
\[ \kappa' = \gamma^{[3]}\cdot (\gamma')^{\perp} + \underbrace{\gamma''\cdot (\gamma'')^{\perp}}_{=0} \in BV. \]

For the free part, we observe that $2\gamma^{[3]}+ 3 (|\gamma''|^2 \gamma')$ is a constant, and since $W^{3,\infty} \hookrightarrow C^{2,\alpha}$ we derive the smoothness using a bootstrap argument.
\end{proof}

\begin{rmq} \
\begin{itemize}
\item The embedding $W^{3,\infty}(\cer,\RR) \hookrightarrow C^{2,\alpha}(\cer,\RR)$ for some $\alpha$ guarantees that any minimal curve is at least of class $C^2$.
\item Locally around points where $\gamma$ is smooth (at least $C^4$) we can recover the classical Euler-Lagrange equation for the curvature
\[ \ud \WW(\gamma) = 2\gamma^{[4]}+ 3 (|\gamma''|^2 \gamma')' = \Big( 2\kappa''+\kappa^3 \Big) \vec{n} \]
with $\vec{n}=(\gamma')^{\perp}$ the unit normal vector to $\gamma$.
\end{itemize}
\end{rmq}


\section{Convexity} \label{convexite}

In this section, we show that if the constraint $\OO$ is convex then every optimal curve circles a convex set. We consider $\gamma$ a minimal curve for \eqref{problem} with $\OO$ convex. We shall use a rigourous proof, but the conclusion can also be understood with unformal arguments:  if we consider a non convex curve $\gamma$ and replace it with its convex envelop, this amounts to replacing some pieces of $\gamma$ by straight segments without creating angles. As a straight segment has zero energy and the removed parts have non zero energy, it follows that the convex envelope has strictly lesser energy. This argument is, however, difficult to write rigourously. We rather use a proof by contradiction based on the support straight lines of $\gamma$.

\begin{Def}[Support straight line]
Let $A$ be a subset of $\RR$. We say that $A$ admits a support straight line at $x \in \od A$ if there exists a linear form $\Lambda$ and a constant $\lambda$ such that
\[ x \in D=\{ y\in \RR \ | \ \Lambda(y)=\lambda \} \]
and
\[ A \subset \{ y\in \RR \ | \ \Lambda(y) \geqslant \lambda \} . \]
\end{Def}

\noindent Geometrically, $D$ is a support straight line of $A$ if $D \cap \od A \neq \emptyset$ and $A$ belongs to only one side of $D$. We will use the following characterisation of convex sets: a (closed with non-empty interior) subset $A$ is convex if and only if every point of its boundary admits a support straight line (see \cite[Theorem 2.7 (iii)]{Giaquinta2012}). In our case, $A$ will be the (closed) set surrounded by $(\gamma)$ and, as the curve is $C^1$, a support straight line is a tangent straight line:
\[ D_t = \gamma(t) + \mathbb{R} \gamma'(t) \]
We will use Lemma \ref{parametr} and the following three lemmas to show that every tangent straight line is a support straight line. For the rest of the section we will use the parametrization of Lemma \ref{parametr}.

\begin{lem} \label{parallele}
For every straight line $\Delta$ of $\RR$, there exist two support straight lines of $(\gamma)$ parallel to $\Delta$.
\end{lem}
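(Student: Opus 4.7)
The plan is to realise the two support lines as extrema of a linear functional. First, fix a unit vector $\vec{n}$ orthogonal to $\Delta$ and consider the continuous function $f : \cer \to \mathbb{R}$ defined by $f(t) = \gamma(t) \cdot \vec{n}$. Since $\cer$ is compact, $f$ attains its maximum $M$ and its minimum $m$ at some times $t_M, t_m \in \cer$. The two candidates for support straight lines are then
\[ D_M = \{ y \in \RR \ : \ y \cdot \vec{n} = M \} \qquad \text{and} \qquad D_m = \{ y \in \RR \ : \ y \cdot \vec{n} = m \}, \]
which are both parallel to $\Delta$ by the choice of $\vec{n}$.

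The verification of the support property is then essentially automatic: each of $D_M$ and $D_m$ meets $(\gamma)$ by construction (at $\gamma(t_M)$ and $\gamma(t_m)$ respectively), and every point $y \in (\gamma)$ satisfies $m \leqslant y \cdot \vec{n} \leqslant M$, so $(\gamma)$ is contained in the closed strip between $D_m$ and $D_M$ and therefore lies on one side of each of these two lines. This is exactly the definition of support straight line recalled above.

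The only delicate point I anticipate is the distinctness $D_M \neq D_m$, i.e.\ $M > m$. I would handle it by contradiction: if $M = m$, then $(\gamma)$ would be entirely contained in the single affine line $D_M$, and the continuous unit tangent $\gamma'/|\gamma'|$ would take values in the two-point set $\{\pm \vec{u}\}$ of unit vectors parallel to $D_M$. By connectedness of $\cer$ this map would be constant, so $\gamma'$ would have constant direction along $D_M$; combined with $|\gamma'|>0$ this contradicts the closedness relation $\int_{\cer}\gamma'\,\ud t = 0$. Hence $M > m$ and $D_M, D_m$ are two distinct support straight lines of $(\gamma)$ parallel to $\Delta$. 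This non-degeneracy step is really the only genuine content of the proof; everything else is standard compactness applied to a linear functional.
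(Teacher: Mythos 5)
Your proof is correct and follows essentially the same route as the paper: both realise the two support lines as the extremal lines parallel to $\Delta$, obtained by maximising and minimising the linear functional $y\mapsto y\cdot\vec{n}$ (equivalently, the height in rotated coordinates) over the compact image $(\gamma)$. Your extra argument for $M>m$ — ruling out that $(\gamma)$ lies in a single line via the closedness relation $\int_{\cer}\gamma'\,\ud t=0$ — is a valid justification of a non-degeneracy point that the paper leaves implicit.
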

This lemma implies that there are infinitely  many support straight lines. The proof requires only the boundedness of the set $(\gamma)$.

\begin{proof}
Up to a rotation and a translation, we can suppose that $\Delta$ is the horizontal axis $\{y=0\}$ of $\RR$. As $\gamma$ is bounded, there exist two straight lines $\Delta_a=\{y=a\}$ and $\Delta_b=\{y=b\}$ (with $b<0<a$) parallel to $\Delta$ such that $\gamma$ belongs to the domain $\{ (x,y)\in \RR \ | \ a\leqslant y \leqslant b \} $. Among them, consider the closest to $(\gamma)$:
\[ a^+ = \inf \{ a>0 \ | \ (\gamma) \text{ is below the straight line } \Delta_a : y=a \} \]
and \[ b^- = \sup \{ b<0 \ | \ (\gamma) \text{ is above the straight line } \Delta_b : y=b \} . \]
Therefore $\Delta^+ : y=a^+$ and $\Delta^- : y=b^-$ are distinct support straight lines of $(\gamma)$ parallel to $\Delta$.
\end{proof}

\begin{lem} \label{segment}
$I=\{ t\in\cer \ | \ D_t=\gamma(t) + \mathbb{R} \gamma'(t) \text{ is a support straight line} \}$ is closed.
\end{lem}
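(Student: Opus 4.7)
The plan is to prove closedness directly by a sequential argument: take any sequence $(t_n) \subset I$ with $t_n \to t_\infty$ in $\cer$, and show $t_\infty \in I$. The condition "$D_{t_n}$ is a support straight line of $A$" (where $A$ is the closed set enclosed by $(\gamma)$) means there is a closed half-plane containing $A$ whose boundary is $D_{t_n}$. Since $\gamma'(t_n) \neq 0$, pick a unit normal $\nu_n$ to $\gamma'(t_n)$ with a sign $\epsilon_n \in \{\pm 1\}$ such that $(\gamma(s) - \gamma(t_n)) \cdot (\epsilon_n \nu_n) \geqslant 0$ for every $s \in \cer$; equivalently, $A \subset \{ y \in \RR : (y - \gamma(t_n)) \cdot (\epsilon_n \nu_n) \geqslant 0 \}$.

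First I would extract a subsequence so that $\epsilon_n$ is constant (it only takes two values, so this is automatic). Writing $\tilde \nu_n = \epsilon_n \nu_n$, the support condition becomes the single closed inequality $(y - \gamma(t_n)) \cdot \tilde\nu_n \geqslant 0$ for all $y \in A$. Next I would pass to the limit: since $\gamma \in C^1$ with $|\gamma'|$ bounded away from zero (constant-speed parametrization, positive length), both $\gamma(t_n) \to \gamma(t_\infty)$ and $\tilde\nu_n \to \tilde\nu_\infty$, where $\tilde\nu_\infty$ is a unit vector orthogonal to $\gamma'(t_\infty)$. Fixing any $y \in A$ and letting $n \to \infty$ in the inequality above yields $(y - \gamma(t_\infty)) \cdot \tilde\nu_\infty \geqslant 0$, so $A$ is contained in the closed half-plane bounded by $D_{t_\infty}$. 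Since $\gamma(t_\infty) \in D_{t_\infty} \cap \partial A$, this exhibits $D_{t_\infty}$ as a support straight line of $A$, i.e. $t_\infty \in I$.

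The main (minor) obstacle is simply that the support-line definition does not specify a preferred side, so one has to pin down a consistent orientation along the sequence before passing to the limit; the pigeonhole argument on $\epsilon_n \in \{\pm 1\}$ handles this without any cost. Apart from that, the proof is a routine passage to the limit in a closed inequality, using only the continuity of $\gamma$ and $\gamma'$ and the non-vanishing of $\gamma'$ — in particular, no use of the minimality of $\gamma$ beyond what has already been established in the preceding sections.
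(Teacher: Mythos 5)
Your proof is correct, and it takes the dual route to the paper's. The paper proves that the complement of $I$ is open: if $D_t$ is not a support line, there are two parameters $s_1,s_2$ whose images lie strictly on opposite sides of $D_t$, and since $t\mapsto D_t$ is continuous (because $\gamma$ is $C^1$ with $\gamma'\neq 0$), the nearby lines $D_{t+\eta}$ still separate $\gamma(s_1)$ and $\gamma(s_2)$ strictly, so a whole interval around $t$ avoids $I$. You instead prove sequential closedness directly, by encoding the support condition as the closed inequality $(y-\gamma(t_n))\cdot\tilde\nu_n\geqslant 0$, pinning down the side via the pigeonhole on $\epsilon_n\in\{\pm 1\}$, and passing to the limit using the continuity of $\gamma$, $\gamma'$ and of the unit normal. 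Both arguments use exactly the same ingredients ($C^1$ regularity, non-vanishing speed, no minimality — consistent with the paper's remark that the lemma holds for every $C^1$ curve) and are of comparable length; yours has the small advantage of never needing to unpack what failure of the support property means, while the paper's avoids any discussion of choosing a consistent orientation of the normal. The only cosmetic difference is that you state the support condition for the enclosed set $A$ rather than for $(\gamma)$, but the two are equivalent here since any closed half-plane containing $(\gamma)$ also contains every bounded component of its complement.
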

This lemma remains valid for every $C^1$ curve.

\begin{proof}
Consider $t\in \cer \smallsetminus I$, $D_t$ is not a support straight line so there exist two distinct $s_1$ and $s_2$ in $\cer$ different from $t$ belonging respectively (and strictly) to one side and to the other side of $D_t$. $t\mapsto D_t$ is continuous (because $\gamma$ is $C^1$) and therefore there exists $\ee>0$ such that for all $\eta \in ]-\ee,\ee[$, $D_{t+\eta}$ separates strictly $\gamma(s_1)$ and $\gamma(s_2)$. Thus $D_{t+\eta}$ is not a support straight line and therefore $]t-\ee,t+\ee[ \subset \cer \smallsetminus I$. Thus $I$ is closed.
\end{proof}

\begin{lem} \label{support}
For all $t$ such that $D_t$ is a support straight line, $I_t = \{ s\in\cer \ | \ \gamma(s)\in D_t \}$ is a segment of $\cer$.
\end{lem}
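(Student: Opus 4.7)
I would argue by contradiction: suppose $I_t$ is not a segment of $\cer$. Since $I_t$ is closed (being the preimage of the closed set $D_t$ under the continuous map $\gamma$), $\cer\setminus I_t$ then has at least two connected components; pick one of them, say $]s_1,s_3[$, so that $\gamma(s_1),\gamma(s_3)\in D_t$ and $\gamma_{|]s_1,s_3[}\subset\{\Lambda>\lambda\}$. The competitor will be obtained by replacing the sub-arc $\gamma_{|[s_1,s_3]}$ with the straight segment $\sigma$ on $D_t$ joining $\gamma(s_1)$ to $\gamma(s_3)$, then reparametrizing on $\cer$ with constant speed to get a curve $\tilde\gamma$. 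If $\tilde\gamma\in\adm$, then $\WW(\tilde\gamma)<\WW(\gamma)$: indeed $\WW(\sigma)=0$, while $\gamma_{|]s_1,s_3[}$ leaves $D_t$ and returns to it and is therefore not contained in a line, so its bending-energy contribution is strictly positive. This will contradict the minimality of $\gamma$.

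\medskip

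The verification that $\tilde\gamma\in\adm$ rests on four points. \textbf{(i)} \emph{Confinement}: since $\gamma(s_1),\gamma(s_3)\in\overline\OO$, the convexity of $\OO$ gives $[\gamma(s_1),\gamma(s_3)]\subset\overline\OO$. \textbf{(ii)} \emph{Tangential contacts only}: the un-replaced part $\gamma_{|]s_3,s_1[}$ stays in $\{\Lambda\geqslant\lambda\}$ and is $C^1$-tangent to $D_t$ at every point where it touches $D_t$, so any contact with $\sigma$ is tangential. \textbf{(iii)} \emph{Strict energy drop}, as noted above. \textbf{(iv)} \emph{$H^2$-regularity at the junctions} $\gamma(s_1),\gamma(s_3)$, which requires the tangent of $\sigma$ to coincide with $\gamma'(s_1)$ and with $\gamma'(s_3)$. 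To enforce this I first reparametrize $\gamma$ via Lemma~\ref{parametr} (so that distinct branches of $\gamma$ do not cross) and Theorem~\ref{reparam} (so that $\indc_\gamma\in\{0,1\}$), and I orient the parametrization so that the set $A$ enclosed by $\gamma$ is its index-$1$ region. Since $A\subset\{\Lambda\geqslant\lambda\}$ and, in this orientation, $A$ lies on the left of $\gamma'$, every contact tangent $\gamma'(s)$ with $s\in I_t$ must point in a single common direction along $D_t$ (``rightward''); combined with Lemma~\ref{parametr}, the non-crossing constraint on the sub-arches above $D_t$ forces the feet $\gamma(s_1)$ and $\gamma(s_3)$ to appear along $D_t$ in the same cyclic order as along $\gamma$. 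The segment $\sigma$, oriented rightward from $\gamma(s_1)$ to $\gamma(s_3)$, then matches both tangents, so $\tilde\gamma\in H^2\cap\adm$.

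\medskip

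The main obstacle is item (iv). The common rightward direction of the contact tangents comes from the support condition $A\subset\{\Lambda\geqslant\lambda\}$ together with the orientation of Theorem~\ref{reparam}, and the matching of the horizontal positions of $\gamma(s_1)$ and $\gamma(s_3)$ comes from the non-crossing parametrization of Lemma~\ref{parametr}, which prevents the sub-arches of $\gamma$ lying above $D_t$ from shuffling their feet on $D_t$. Without either Lemma~\ref{parametr} or the index reduction, the tangents at $s_1$ and $s_3$ could a priori point in opposite directions along $D_t$, or the feet could be misplaced, and no straight segment along $D_t$ would glue back into $\gamma$ with $H^2$-regularity; if in addition $\gamma(s_3)$ turned out to lie leftward of $\gamma(s_1)$ for the chosen component, the same surgery could be applied to the other connected component of $\cer\setminus I_t$, for which the analogous orientation works, thereby closing the argument in every case.
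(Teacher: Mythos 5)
Your surgery (replacing the arc over a connected component $]s_1,s_3[$ of $\cer\smallsetminus I_t$ by the segment of $D_t$ joining its feet) is exactly the construction the paper uses, and your points (i)--(iii) are sound. The gap is in (iv): the claim that, after the reparametrizations of Lemma \ref{parametr} and Theorem \ref{reparam}, all tangents $\gamma'(s)$ for $s\in I_t$ point in one common direction along $D_t$. The justification ``$A$ lies on the left of $\gamma'$, and $A\subset\{\Lambda\geqslant\lambda\}$'' is valid only at simple points of the curve, where $A$ locally fills one whole side of the strand; it breaks down at tangential double points. Indeed, by Step 1 of the proof of Theorem \ref{decomposition}, every self-contact couple $(s,s')$ of a minimal curve satisfies $\gamma'(s)=-\gamma'(s')$, so if a self-contact point of $\gamma$ lies on the support line $D_t$ --- which cannot be excluded at this stage, since convexity of $\gamma$ is precisely what is being proved --- then $s$ and $s'$ both belong to $I_t$ with opposite tangent directions. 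Locally the index-$1$ region is then the cusp pinched between the two ordered branches (index $0$ below the lower branch, $1$ between, $0$ above the upper one), so the upper branch has $A$ on its left only by pointing ``leftward'' along $D_t$ while still touching $D_t$ from above. Your fallback of switching to the other connected component addresses misplaced feet under the assumption that all tangents agree; it does not repair the case of opposite tangents, where no segment of $D_t$ can be glued in a $C^1$ (hence $H^2$) way at both junctions.

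The paper treats exactly this situation as a separate case. If the two contact parameters $t,s$ have $\gamma'(t)=-\gamma'(s)$, one of the two branches enters the region bounded by $\gamma_{|[t,s]}$ and $D_t$ while the other exits it, so a continuity (crossing) argument produces a self-intersection $\gamma(u_1)=\gamma(u_2)$ with $t<u_1<s<u_2$; the competitor is then $\gamma_{|[t,u_1]}\oplus\gamma_{-|[u_2,s]}\oplus[\gamma(s),\gamma(t)]$, in which one branch is traversed backwards so that the tangents match at all gluing points, and the energy still drops strictly. To complete your proof you need either this additional construction or an actual proof (not an assertion) that no parameter of $I_t$ carries the reversed orientation; as written, that intermediate claim is false in the presence of self-contacts on $D_t$.
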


\begin{proof}
The proof uses the minimality of $\gamma$.
As $\gamma$ is continuous and $D_t$ is closed, we only have to prove that $I_t$ is connected.
Consider $t$ such that $I_t$ is disconnected and $D_t$ is a support straight line. There exists $s\in I_t$ such that $\gamma$ does not go along $[\gamma(t),\gamma(s)]$ (as a segment of $\RR$). Since $D_t$ is a support straight line, $\gamma$ is tangent to $D_t$ at every point of $I_t$. Therefore $\gamma'(t)$ and $\gamma'(s)$ belongs to $\overrightarrow{D}_t$. We denote by $[\gamma(t),\gamma(s)]$ the oriented segment of $\RR$ linking $\gamma(t)$ toward $\gamma(s)$.

If $\gamma'(t) = \gamma'(s)$, up to switching $\gamma(t)$ and $\gamma(s)$ we can suppose $\overrightarrow{\gamma(t)\gamma(s)}\cdot\gamma'(t)>0$ (see Figure \ref{convex}), then consider $\tilde{\gamma} = [\gamma(t),\gamma(s)] \oplus \gamma_{|]s,t[}$. As every contact between $\gamma$ and $D_t$ are tangent and $(\gamma)$ is not included in $D_t$:
\[ \int_{([\gamma(t),\gamma(s)])} \kappa^2 \ud \mathcal{H}^1 \quad = \quad 0 \quad < \quad \int_{(\gamma_{|[t,s]})} \kappa^2 \ud \mathcal{H}^1 . \]
Then $\WW(\tilde{\gamma}) < \WW(\gamma)$ and this is in contradiction with the optimality.

If $\gamma'(t) =- \gamma'(s)$ then one part of $\gamma$ (near $t$ or $s$) is coming in the domain delimited by $\gamma_{[t,s]}$ and $D_t$, and the other part is going out of the domain. By continuity, there exist $u_1,u_2 \in \cer$ such that $\gamma(u_1)=\gamma(u_2)$ and $t<u_1<s<u_2$ (see Figure \ref{convex}). As before, we can suppose that $\gamma'(s)\cdot\overrightarrow{\gamma(s)\gamma(t)}>0$. Consider $\tilde{\gamma} = \gamma_{|[t,u_1]} \oplus \gamma_{-|[u_2,s]} \oplus [\gamma(s),\gamma(t)]$. As before, $\tilde{\gamma}$ has less energy than $\gamma$ which is impossible.

\begin{figure}[!ht]
\begin{center}

\begin{tikzpicture}
\draw [thick] (2,-1)+(60:1) arc (60:270:1) --
	plot [domain=-1:1.3] (3+\x,{2*(\x-1)^2*(-0.25*\x-0.5)}) ;
\draw [thick,dotted]
	(2,-1)+(30:1) arc (30:60:1)
	plot [domain=1.3:1.6] (3+\x,{2*(\x-1)^2*(-0.25*\x-0.5)}) ;
\draw (1,0) -- (5,0) ;
\draw [->] (1.8,0.2) -- (2.2,0.2) ;
\draw [->] (4.2,0.2) -- (3.8,0.2) ;
\draw (2,-0.3) node{$\gamma(s)$} (4,-0.3) node{$\gamma(t)$} ;
\draw (5.5,0) node{$D_t$} ;
\draw (3,1) node{$\gamma'(s)=-\gamma'(t)$} ;

\draw (-5,0) -- (-1,0) ;
\draw [thick] plot [domain=-1:1] ({\x-3},{-(\x-1)^2*(\x+1)^2}) ;
\draw [thick]
	plot [domain=-1.5:-1] ({\x-3},{-(\x+1)^2})
	plot [domain=1:1.5] ({\x-3},{-(\x-1)^2}) ;
\draw [thick,dotted]
	plot [domain=-1.8:-1.5] ({\x-3},{-(\x+1)^2})
	plot [domain=1.5:1.8] ({\x-3},{-(\x-1)^2}) ;
\draw [->] (-4.2,0.2) -- (-3.8,0.2) ;
\draw [->] (-2.2,0.2) -- (-1.8,0.2) ;
\draw (-4,-0.5) node{$\gamma(t)$} (-2,-0.5) node{$\gamma(s)$} (-5.5,0) node{$D_t$} ;
\draw (-3,1) node{$\gamma'(s)=\gamma'(t)$} ;
\end{tikzpicture}

\end{center}
\caption{Two cases of tangent contact with $D_t$}
\label{convex}
\end{figure}
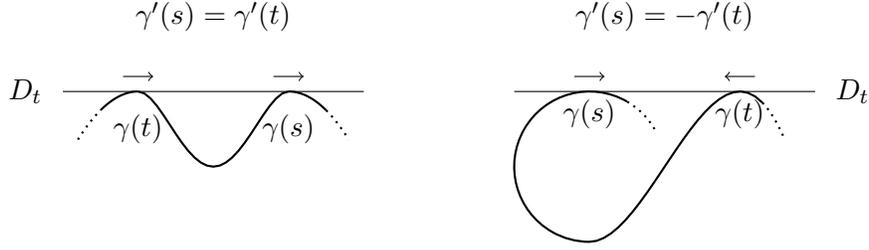
Thus for all $t$ such that $D_t$ is a support straight line, $I_t$ is a segment of $\cer$.
\end{proof}

Now we will prove the main result of this section.

\begin{thm}
If $\OO$ is convex then every optimal curve surrounds a convex set.
\end{thm}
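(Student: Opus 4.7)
My plan is to show that the set $I=\{t\in\cer : D_t \text{ is a support straight line of }(\gamma)\}$ introduced in Lemma~\ref{segment} equals the whole $\cer$. Since $\gamma$ is $C^1$, at any boundary point $\gamma(t)\in\od A$ of the enclosed region $A$ a support line must coincide with the tangent $D_t$, and any support line of $(\gamma)$ is automatically a support line of $A$ (a bounded component of index $1$ cannot sit in an open half-plane disjoint from $(\gamma)$). Hence $I=\cer$ forces every boundary point of $A$ to admit a support line, and the characterization recalled before the theorem yields that $A$ is convex.

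First I observe that $I$ is non-empty: Lemma~\ref{parallele} applied to any direction produces a support line, and at any contact point $\gamma(t)$ the $C^1$-regularity of $\gamma$ forces $\gamma'(t)$ parallel to it, so $D_t$ coincides with the support line and $t\in I$. Lemma~\ref{segment} gives that $I$ is closed, so $\cer\setminus I$ is open.

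The core of the proof, and the main obstacle, is to rule out $I\ne\cer$ by producing a competitor with strictly smaller energy. Assume for contradiction $\cer\setminus I\ne\emptyset$ and take a connected component $J=(a,b)$, so $a,b\in I$. By Lemma~\ref{support} the contact arcs $I_a,I_b$ are connected, and they cannot meet $J$: otherwise $\gamma$ would parametrise a piece of the line $D_a$ (or $D_b$) on an interval contained in $J$, and at every such time the tangent line would coincide with the support line $D_a$, contradicting $J\subset\cer\setminus I$. I would then build an admissible competitor $\tilde\gamma\in\adm$ by replacing $\gamma|_{[a,b]}$ with a $C^1$ concatenation of straight segments lying on $D_a$ and $D_b$ (joined either at $D_a\cap D_b$ or, when $D_a\parallel D_b$, via an auxiliary chord). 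Convexity of $\OO$ and the inclusion $(\gamma)\subset\overline\OO$ keep $\tilde\gamma$ confined in $\overline\OO$, while tangency of $\gamma$ to $D_a$ at $\gamma(a)$ and to $D_b$ at $\gamma(b)$ produces $C^1$-matching at the junctions. The replacement has zero bending energy (it is piecewise straight), whereas $\gamma|_{[a,b]}$ cannot itself be a straight segment (else its tangent line would coincide with that line and would support $(\gamma)$, putting $(a,b)$ in $I$), so it contributes strictly positive energy. Hence $\WW(\tilde\gamma)<\WW(\gamma)$, contradicting minimality and forcing $I=\cer$. The principal technical difficulty is making the competitor construction precise across the different relative configurations of $D_a$ and $D_b$ (parallel, intersecting, coinciding), and checking both $C^1$-admissibility at the junctions and inclusion in $\overline\OO$ via the convex hull of $(\gamma)$.
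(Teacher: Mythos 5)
Your setup (reduce to showing $I=\cer$, use Lemmas \ref{parallele}, \ref{segment} and \ref{support}, take a maximal component $]a,b[$ of $\cer\smallsetminus I$ with $D_a,D_b$ support lines) coincides with the paper's. The gap is in the contradiction step. Your competitor replaces $\gamma_{|[a,b]}$ by segments lying on $D_a$ and $D_b$ joined at $z=D_a\cap D_b$ (or via an auxiliary chord when they are parallel). Unless $D_a=D_b$, this concatenation has a corner at the interior junction: the path arrives along the direction of $D_a$ and leaves along the direction of $D_b$, so it is not $C^1$ there, hence not in $H^2$ and not admissible, and its bending energy is not finite; no inequality $\WW(\tilde\gamma)<\WW(\gamma)$ can be extracted. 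Tangency of $\gamma$ to $D_a$ at $\gamma(a)$ and to $D_b$ at $\gamma(b)$ only gives $C^1$ matching at the two outer endpoints, not at $z$. Moreover $z$ (and the chord in the parallel case) need not lie in $\overline\OO$: for $\OO$ a disk, two non-parallel tangent lines of a confined curve typically meet outside $\overline\OO$, so confinement fails as well. These are precisely the obstructions the authors point to when they say that the convex-envelope heuristic (``replacing pieces by straight segments \emph{without creating angles}'') is difficult to make rigorous, and why they do not use it.

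The paper derives the contradiction without any competitor at this stage: the only energy comparison occurs inside Lemma \ref{support}, where the inserted segment lies on a \emph{single} support line $D_t$ and the junctions are genuinely tangential. For the maximal interval $]a,b[$ it argues purely geometrically, in three cases ($D_a=D_b$; $D_a$ and $D_b$ parallel and distinct; $D_a$ and $D_b$ secant), that Lemma \ref{parallele} provides a support line in some other direction whose tangency parameter must lie in $]a,b[$, contradicting $]a,b[\subset\cer\smallsetminus I$; the non-crossing parametrization of Lemma \ref{parametr} is what locates the two branches in the parallel and secant cases. If you want to keep an energy-based route you would have to round the corner at $z$, which reintroduces strictly positive energy and forces a quantitative comparison that your argument does not supply.
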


\begin{proof}
Recall that $D_t = \gamma(t) + \mathbb{R} \gamma'(t)$ and $I=\{ t\in\cer \ | \ D_t \text{ is a support straight line} \}$ is closed thanks to Lemma \ref{segment}. Assume there exists $t$ in $\cer \smallsetminus I$ and consider $]a,b[$ the maximal open interval of $\cer$ containing $t$ such that
$]a,b[ \subset \cer \smallsetminus I$. As $\cer \smallsetminus I$ is open, the maximality implies that $D_a$ and $D_b$ are support straight lines of $(\gamma)$.

\textbf{Case 1:} $D_a=D_b$. \\
In that case, $\gamma(a),\gamma(b) \in D_b$, and using Lemma \ref{support} we obtain $\gamma_{-|[a,b]} \subset D_a$. Thanks to Lemma \ref{parallele}, there exists a support straight line $D$ different from $D_a=D_b$. If there exists a point $s$ such that $D=D_s$, then necessarily $s\in ]a,b[$ because the whole branch $\gamma_{-|[a,b]} \subset D_a$. This is in contradiction with $]a,b[\subset \cer \smallsetminus I$, see Figure \ref{cas1-2}.

\textbf{Case 2:} $D_a$ and $D_b$ are distinct and parallel. \\
In that case, up to using a rotation we can suppose that $D_a$ and $D_b$ are horizontal lines. $(\gamma)$ is confined between $D_a$ and $D_b$. Using Lemma \ref{parallele}, there exist two vertical support straight lines, one on the right and one on the left of $(\gamma)$. One of these is tangent to the branch $\gamma_{|]a,b[}$. Indeed suppose that $\gamma_{-|[a,b]}$ links both vertical straight lines. Inside the rectangle defined by both vertical and both horizontal straight lines, we have that $\gamma_{-|[a,b]}$ links the vertical edges and $\gamma_{|[a,b]}$ links the horizontal edges. These edges are parts of support straight lines therefore each contact with $\gamma$ is not localised on the vertices of the rectangle. Thus $\gamma_{-|[a,b]}$ must cross $\gamma_{|]a,b[}$ and that is impossible with the parametrization of Lemma \ref{parametr}. Consequently the branch $\gamma_{|]a,b[}$ admits a support straight line which is impossible since $]a,b[\subset \cer \smallsetminus I$. Therefore, this is again an impossible case (see Figure \ref{cas1-2}).

\begin{figure}[!ht]
\begin{center}

\begin{tikzpicture}
\draw [thick] plot [domain=-1:1, samples=100] ({-5+cos(135*\x+90)},{(\x-1)^2*(\x+1)^2}) -- cycle ;
\draw (-7,0) -- (-3,0) (-5.7,0) node{$|$} (-4.3,0) node{$|$} ;
\draw (-5,-0.3) node{$\gamma_{-|[a,b]}$} (-5,1.3) node{$\gamma_{|[a,b]}$} ;
\draw (-4.4,0.4) node{$\gamma(a)$} (-5.6,0.4) node{$\gamma(b)$} ;
\draw (-7,-0.3) node{$D_a=D_b$} ;
\draw (-5,-1) node{confounded} ;

\draw [thick] plot [domain=-1:1, samples=100] ({\x},{1-0.25*(\x+1)^2*(2-\x)}) ;
\draw [thick] (-1,0.75)+(90:0.25) arc (90:280:0.25) ;
\draw [thick] (1,0.25)+(270:0.25) arc (270:460:0.25) ;
\draw (-2,0) -- (2,0) (-2,1) -- (2,1) ;
\draw (-1.25,1.5) -- (-1.25,-0.5) ;
\draw (1.25,1.5) -- (1.25,-0.5) ;
\draw [thick,dashed] (-1,0.5) -- (-0.5,0.5) (1,0.5) -- (0.5,0.5) ;
\draw (-0.8,1.4) node{$\gamma(a)$} (0.8,-0.4) node{$\gamma(b)$}
(0.7,1.2) node{$D_a$} (-0.7,-0.2) node{$D_b$} ;
\draw (-1,1) node{$|$} (1,0) node{$|$} ;
\draw (0,-1) node{non-confounded} (0,-1.4) node{impossible} ;

\draw [thick]
	plot [domain=-1:1, samples=100]
		({5-0.93*\x*\x+\x+0.93},{1-0.25*(\x+1)^2*(2-\x)}) --
	plot [domain=-1:1, samples=100]
		({5+0.93*(1-\x)^2-(1-\x)-0.93},{0.25*(\x+1)^2*(2-\x)}) ;
\draw (3,0) -- (7,0) (3,1) -- (7,1) ;
\draw (3.8,1.5) -- (3.8,-0.5) ;
\draw (6.2,1.5) -- (6.2,-0.5) ;
\draw (4.2,1.4) node{$\gamma(a)$} (5.8,-0.4) node{$\gamma(b)$}
	(5.7,1.2) node{$D_a$} (4.3,-0.2) node{$D_b$}
	(3,0.3) node{$\gamma_{|[a,b]}$} (7,0.7) node{$\gamma_{-|[a,b]}$} ;
\draw[->] (3.4,0.3) -- +(0.6,0.1) ;
\draw[->] (6.5,0.7) -- +(-0.6,-0.1) ;
\draw (4,1) node{$|$} (6,0) node{$|$} ;
\draw (5,-1) node{non-confounded} ;
\end{tikzpicture}

\end{center}
\caption{Parallel case}
\label{cas1-2}
\end{figure}
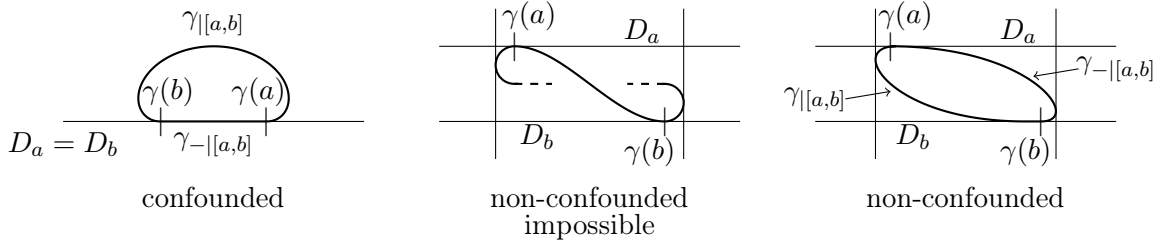

\textbf{Case 3:} $D_a$ and $D_b$ are not parallel. \\
In that case, we denote as $z$ the intersection point. Up to using a rotation, we suppose that the straight line $(\gamma(a)\gamma(b))$ is horizontal and $z$ is above this line. Consider the domain $U$ delimited by the segments (of $\RR$) $[z,\gamma(a)]$ and $[z,\gamma(b)]$ and by $\gamma_{|[a,b]}$. Using again Lemma \ref{parametr}, $\gamma_{-|[a,b]}$ is either included in the closed domain $U$ (see Figure \ref{cas3}, inner case), either out of the open domain $U$ (see Figure \ref{cas3}, outer case). For the inner case, there exists a horizontal support straight line below $(\gamma)$. The corresponding tangent straight line $D_s$ satisfies $s\in ]a,b[$ which is impossible. For the outer case, the argument is similar using a horizontal support straight line above $(\gamma)$.

\begin{figure}[!ht]
\begin{center}

\begin{tikzpicture}
\fill [color=gray!40] (0,1.5) -- (-2,0) --
	plot [domain=-1.8:0,samples=150] ({-1.4-sqrt(1-(-1.8-\x+0.8)^2)},{-1.8-\x}) --
	plot [domain=0:1] ({-1.4+3.1*\x},{-1.8+0.9*(\x)^2*(3-2*\x)}) --
	plot [domain=-0.9:0,samples=100] ({1.7+sqrt(0.25-(\x+0.4)^2)},{\x}) --
	(2,0) -- cycle ;
\draw (-3.5,-1.125) -- (0.33,1.75)
	(3.5,-1.125) -- (-0.33,1.75)
	(-3,-1.8) -- (3,-1.8) ;
\draw [thick]
	(-1.4,-0.8)+(90:1) arc (90:130:1)
	plot [domain=-1.8:0,samples=150] ({-1.4-sqrt(1-(-1.8-\x+0.8)^2)},{-1.8-\x}) --
	plot [domain=0:1] ({-1.4+3.1*\x},{-1.8+0.9*(\x)^2*(3-2*\x)}) --
	plot [domain=-0.9:0,samples=100] ({1.7+sqrt(0.25-(\x+0.4)^2)},{\x})
	(1.7,-0.4)+(50:0.5) arc (50:90:0.5) ;
\draw [thick, dotted] (-1,0.2) -- (-1.4,0.2)  (1.3,0.1) -- (1.7,0.1) ;
\draw (-2.1,0.1) -- (-1.9,-0.1) (1.9,-0.1) -- (2.1,0.1) ;
\draw (0,1.5) node{$\bullet$} (0,1.8) node{$z$} (0,0) node{$U$} 
	(-2.2,0.3) node{$\gamma(a)$} (2.2,0.3) node{$\gamma(b)$}
	(1.5,-1.2) node{$\gamma_{|]a,b[}$} (-3.4,-1.8) node{$D_s$} 
	(0,-2.5) node{inner case} ;

\fill [color=gray!40] (8,1.5) -- (6,0) --
	plot [domain=-2:2] ({\x+8},{-(3/16)*((\x)^2-4)}) --
		(10,0) -- cycle ;
\draw (4.5,-1.125) -- (8.33,1.75)
	(11.5,-1.125) -- (7.66,1.75)
	(5,0.75) -- (11,0.75) ;
\draw [thick]
	(6.6,-0.8)+(125:1) arc (125:170:1)
	plot [domain=-2:2] ({\x+8},{-(3/16)*((\x)^2-4)}) 
		(9.7,-0.4)+(0:0.5) arc (0:55:0.5) ;
\draw [thick,dotted]
	(6.6,-0.8)+(170:1) arc (170:200:1)
	(9.7,-0.4)+(300:0.5) arc (300:360:0.5) ;
\draw (5.9,0.1) -- (6.1,-0.1) (9.9,-0.1) -- (10.1,0.1) ;
\draw (8,1.5) node{$\bullet$} (8,1.8) node{$z$} (8,1.1) node{$U$}
	(5.8,0.3) node{$\gamma(a)$} (10.2,0.3) node{$\gamma(b)$}
	(9,0.1) node{$\gamma_{|]a,b[}$} (4.6,0.75) node{$D_s$}
	(8,-2.5) node{outer case} ;
\end{tikzpicture}

\end{center}
\caption{Two cases when $D_a$ and $D_b$ are not parallel}
\label{cas3}
\end{figure}
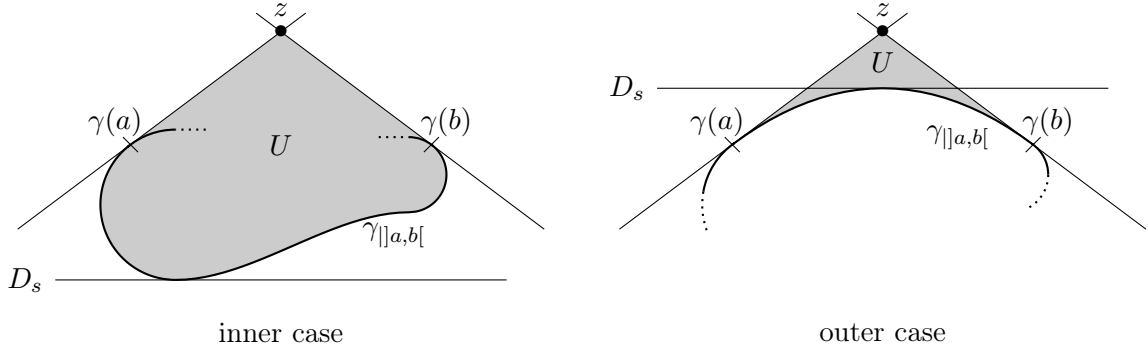

Every case leads to a contradiction, we deduce $I=\cer$ i.e. every tangent line is a support straight line. Thus $\gamma$ surrounds a convex set.
\end{proof}


\section{Example of a minimizer with a self-contact point}\label{example}

In this section we give the example of 
a set $\OO$ such that every minimal curve for \eqref{problem} has necessarily a self-contact point. The confinement $\OO$ will be a thin tubular neighborhood of a curve describing two drops linked by a circular arc with angular amplitude bigger than $\pi$ (see Figure \ref{gouttes}). 
The arguments below use the fact that $\OO$ is not simply connected, and any minimal curve satisfies
\[ \big\{ z \in \RR \ | \ \indc_\gamma(z) = 1 \big\} \not\subset \OO . \]
In this example any minimal curve surrounds a set which is not included in the confinement. Does there exist a simply connected open confinement $\OO$ ensuring that a minimal curve has a self-contact? This is still an open problem.

\begin{figure}[!ht]
\begin{center}

\begin{tikzpicture}
\draw[color = gray!75] ({-3/2},{-sqrt(6)/2}) circle (1)
	({-3/2-2*cos(30+45)},{-sqrt(6)/2+2*sin(30+45)}) circle (1)
	({-3/2-2*cos(-30+45)},{-sqrt(6)/2+2*sin(-30+45)}) circle (1) ;
\draw[thick]
	plot[domain=0:60,samples=100]
		({-3/2-2*cos(30+45) + cos(225+\x)},{-sqrt(6)/2+2*sin(30+45) + sin(225+\x)})
	plot[domain=0:300,samples=100]
		({-3/2+cos(105-\x)},{-sqrt(6)/2+sin(105-\x)})
	plot[domain=0:60,samples=100]
		({-3/2-2*cos(-30+45)+cos(-15+\x)},{-sqrt(6)/2+2*sin(-30+45)+sin(-15+\x)}) --
	({-1-3/2-2*cos(-30+45)+cos(-15+60)},1) ;
\end{tikzpicture}
\qquad
\begin{tikzpicture}[scale=0.5]
\draw[thick]
	plot[domain=0:60,samples=100]
		({-3/2-2*cos(30+45) + cos(225+\x)},{-sqrt(6)/2+2*sin(30+45) + sin(225+\x)})
	plot[domain=0:300,samples=100]
		({-3/2+cos(105-\x)},{-sqrt(6)/2+sin(105-\x)})
	plot[domain=0:60,samples=100]
		({-3/2-2*cos(-30+45)+cos(-15+\x)},{-sqrt(6)/2+2*sin(-30+45)+sin(-15+\x)}) ;
\draw[thick]
	plot[domain=0:60,samples=100]
		({3/2+2*cos(30+45) - cos(225+\x)},{-sqrt(6)/2+2*sin(30+45) + sin(225+\x)})
	plot[domain=0:300,samples=100]
		({3/2-cos(105-\x)},{-sqrt(6)/2+sin(105-\x)})
	plot[domain=0:60,samples=100]
		({3/2+2*cos(-30+45)-cos(-15+\x)},{-sqrt(6)/2+2*sin(-30+45)+sin(-15+\x)}) ;
\draw[thick] plot[domain=0:270,samples=100]
	({(sqrt(2)*(3/2+sqrt(6)/2))*cos(225-\x)},
	{(3/2+sqrt(6)/2)+(sqrt(2)*(3/2+sqrt(6)/2))*sin(225-\x)}) ;
\draw (0,{(3/2+sqrt(6)/2)}) node{linked drops} ;
\end{tikzpicture}
\qquad
\begin{tikzpicture}[scale=0.5]
\filldraw[fill=gray!40, draw=black]
	plot[domain=0:60,samples=100]
		({-3/2-2*cos(30+45) + 0.7*cos(225+\x)},{-sqrt(6)/2+2*sin(30+45) + 0.7*sin(225+\x)}) --
	plot[domain=0:300,samples=100]
		({-3/2+1.3*cos(105-\x)},{-sqrt(6)/2+1.3*sin(105-\x)}) --
	plot[domain=0:60,samples=100]
		({-3/2-2*cos(-30+45)+0.7*cos(-15+\x)},{-sqrt(6)/2+2*sin(-30+45)+0.7*sin(-15+\x)}) --
	plot[domain=0:270,samples=100]
		({(0.3+sqrt(2)*(3/2+sqrt(6)/2))*cos(225-\x)},
		{(3/2+sqrt(6)/2)+(0.3+sqrt(2)*(3/2+sqrt(6)/2))*sin(225-\x)}) --
	plot[domain=-60:0,samples=100]
		({3/2+2*cos(-30+45)-0.7*cos(-15-\x)},{-sqrt(6)/2+2*sin(-30+45)+0.7*sin(-15-\x)}) --
	plot[domain=-300:0,samples=100]
		({3/2-1.3*cos(105+\x)},{-sqrt(6)/2+1.3*sin(105+\x)}) --
	plot[domain=-60:0,samples=100] ({3/2+2*cos(30+45)-0.7*cos(225-\x)},{-sqrt(6)/2+2*sin(30+45)+0.7*sin(225-\x)}) --
	plot[domain=-270:0,samples=100]
		({(-0.3+sqrt(2)*(3/2+sqrt(6)/2))*cos(225+\x)},
		{(3/2+sqrt(6)/2)+(-0.3+sqrt(2)*(3/2+sqrt(6)/2))*sin(225+\x)}) -- cycle ;
\filldraw[fill=white, draw=black]
	plot[domain=0:60,samples=100]
		({-3/2-2*cos(30+45) + cos(225+\x)},{-sqrt(6)/2+2*sin(30+45) + sin(225+\x)}) --
	plot[domain=0:300,samples=100]
		({-3/2+cos(105-\x)},{-sqrt(6)/2+sin(105-\x)}) --
	plot[domain=0:60,samples=100]
		({-3/2-2*cos(-30+45)+cos(-15+\x)},{-sqrt(6)/2+2*sin(-30+45)+sin(-15+\x)}) -- cycle ;
\filldraw[fill=white, draw=black]
	plot[domain=0:60,samples=100]
		({3/2+2*cos(30+45) - cos(225+\x)},{-sqrt(6)/2+2*sin(30+45) + sin(225+\x)}) --
	plot[domain=0:300,samples=100]
		({3/2-cos(105-\x)},{-sqrt(6)/2+sin(105-\x)}) --
	plot[domain=0:60,samples=100]
		({3/2+2*cos(-30+45)-cos(-15+\x)},{-sqrt(6)/2+2*sin(-30+45)+sin(-15+\x)}) -- cycle ;
\draw (0,{1+(3/2+sqrt(6)/2)}) node{tubular}
	(0,{(3/2+sqrt(6)/2)}) node{neighborhood}
	(0,{-1+(3/2+sqrt(6)/2)}) node{$\OO$} ;
\end{tikzpicture}

\end{center}
\caption{A thin confinement around two linked drops}
\label{gouttes}
\end{figure}
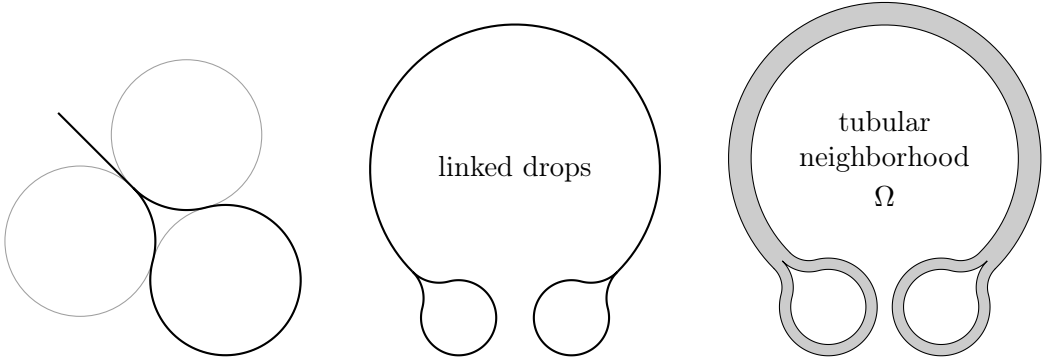

We will first show that any minimal curve surrounds necessarily both drops. Secondly, we assume the existence of a minimal curve without self-contact point, and we construct a deformation that decreases its energy, leading to a contradiction.

We will use an inequality on Jordan curves proved independently in \cite{Bucur2014} and \cite{Ferone2014}:
\begin{thm} \label{inégalité BH}
Let $\gamma$ be a $C^2$ Jordan curve bordering a simply connected subset $\mathcal{U}$. Then, it holds
\[ \WW(\gamma)^2|\mathcal{U}| \geqslant 4\pi^3 .\]
\end{thm}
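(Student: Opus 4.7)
The plan exploits the scale invariance of $\WW(\gamma)^2|\mathcal{U}|$ under the dilation $\gamma \mapsto \lambda\gamma$ (which sends $\WW \to \WW/\lambda$ and $|\mathcal{U}| \to \lambda^2|\mathcal{U}|$): after normalization, it suffices to assume $|\mathcal{U}| = \pi$ and prove $\WW(\gamma) \geq 2\pi$, with equality achieved by the unit circle.

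Two immediate ingredients are available. The Hopf Umlaufsatz gives $\int_\gamma \kappa\,ds = \pm 2\pi$ for a $C^2$ Jordan curve, and Cauchy--Schwarz then yields $L\cdot\WW(\gamma) \geq 4\pi^2$. The classical isoperimetric inequality gives $L^2 \geq 4\pi|\mathcal{U}|$. These alone are insufficient, however, since they produce only $\WW \geq 4\pi^2/L$ with $L$ bounded below by $2\sqrt{\pi|\mathcal{U}|}$ but potentially arbitrarily large; they do not combine to give the scale-invariant target.

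The plan therefore proceeds by reduction to the convex case. Replacing $\gamma$ by a smooth approximation of $\partial(\mathrm{conv}(\mathcal{U}))$ weakly enlarges $|\mathcal{U}|$ and does not increase $\WW(\gamma)$, since concavities of $\mathcal{U}$ are bridged by straight segments contributing zero bending energy, while the remaining arcs coincide with portions of $\gamma$; a standard mollification restores $C^2$ regularity in the limit. For a convex $C^2$ curve I would parametrize by the outward normal angle $\phi\in[0,2\pi]$; with $\rho(\phi) = 1/\kappa(\phi)$ and support function $h(\phi)$ (origin chosen inside $\mathcal{U}$) one has $\rho = h + h''$ and
\[
L = \int_0^{2\pi} h\,d\phi, \qquad 2|\mathcal{U}| = \int_0^{2\pi} h\rho\,d\phi, \qquad \WW(\gamma) = \int_0^{2\pi} \frac{d\phi}{\rho}.
\]
A weighted Cauchy--Schwarz applied to the trivial identity $(2\pi)^2 = \bigl(\int 1\,d\phi\bigr)^2$ yields
\[
(2\pi)^2 \;\leq\; \left(\int_0^{2\pi} h\rho\,d\phi\right)\!\left(\int_0^{2\pi} \frac{d\phi}{h\rho}\right) \;=\; 2|\mathcal{U}| \cdot \int_0^{2\pi} \frac{d\phi}{h\rho}.
\]
From here, a further weighted Cauchy--Schwarz or H\"older estimate coupling $\int d\phi/(h\rho)$ to $\WW = \int d\phi/\rho$, combined with a Wirtinger-type inequality exploiting the identity $2|\mathcal{U}| = \int h^2\,d\phi - \int(h')^2\,d\phi$ via Fourier expansion of $h$, yields the sharp bound $\WW(\gamma) \geq 2\pi^{3/2}/\sqrt{|\mathcal{U}|}$, equivalent to the statement.

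The main technical obstacle I would expect is this last chain of weighted inequalities for the support function, whose sharpness must recover equality precisely at $h \equiv \mathrm{const}$ (the circle), together with the careful justification of the convex-hull reduction, since $\partial(\mathrm{conv}(\mathcal{U}))$ is only $C^{1,1}$ in general and one must pass to the limit of a smoothing scheme.
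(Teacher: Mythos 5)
First, a remark on the comparison you asked for: the paper does not prove this statement at all. Theorem 6.1 is quoted from \cite{Bucur2014} and \cite{Ferone2014} and used as a black box in Section 6, so there is no internal proof to compare against; your attempt has to stand on its own.

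It does not, because the central reduction is invalid. Passing from $\gamma$ to (a smoothing of) $\partial(\mathrm{conv}(\mathcal{U}))$ indeed does not increase the bending energy, but — as you yourself note — it \emph{enlarges} the enclosed area, and the area sits in the numerator of the scale-invariant quantity you must bound from below. Writing $K=\mathrm{conv}(\mathcal{U})$, the convex case would give $\WW(\partial K)^2|K|\geqslant 4\pi^3$, and combining this with $\WW(\gamma)\geqslant \WW(\partial K)$ yields only
\[ \WW(\gamma)^2\,|\mathcal{U}| \;\geqslant\; 4\pi^3\,\frac{|\mathcal{U}|}{|K|}\;\leqslant\;4\pi^3 , \]
i.e.\ a bound that is strictly weaker than the claim whenever $\mathcal{U}$ is not convex. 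The two monotonicities push the product $\WW^2|\mathcal{U}|$ in opposite directions, so the reduction simply does not close; this is exactly why the nonconvex case is the delicate part of \cite{Bucur2014} and \cite{Ferone2014}, where one instead works with a minimizer of the scale-invariant functional and proves \emph{a posteriori} that it is convex by cut-and-paste arguments on the minimizer itself (much as in Section 6 of the present paper), rather than convexifying an arbitrary competitor. Moreover, even granting convexity, your argument is not complete: after the first Cauchy--Schwarz you still need to control $\int_0^{2\pi} d\phi/(h\rho)$ by $\WW=\int_0^{2\pi} d\phi/\rho$ with a sharp constant, and you only assert that "a further weighted Cauchy--Schwarz or H\"older estimate" combined with Wirtinger will do it. Your own preliminary discussion shows that the obvious combinations (Umlaufsatz plus Cauchy--Schwarz plus isoperimetric, or the Fourier identity for $2|\mathcal{U}|$) land on a \emph{reverse} isoperimetric inequality and fail; the missing inequality is the actual content of the theorem and is not supplied.
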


Consider the curve $\Gamma$ depicted in Figure \ref{gouttes}, and 
let $\OO_\ee$ be a $\ee$-tubular neighborhood of $\Gamma$. 
If we consider a simple minimal curve $\gamma$ surrounding a set $\mathcal{U}$, three cases arise (see Figure \ref{3cases}):

\begin{enumerate}
\item $\mathcal{U} \subset \OO_\ee$. Then $|\mathcal{U}| 
\leqslant |\OO_\ee|$ and so, by Theorem \ref{inégalité BH}, we have that
\[ \WW(\gamma)^2 \geqslant \frac{4\pi^3}{|\OO_\ee|} \stackrel{\ee \to 0}{\longrightarrow} +\infty .\]
Thus, for $\ee$ small enough, $\WW(\gamma) > \WW(\Gamma)$, which contradicts the minimality of $\gamma$.
\item $\mathcal{U}$ surrounds only one drop. Then $\gamma$ has to 
turn about in a
$\ee$-width space. Considering a part of the curve along which an angle of
$\frac\pi 4$ is described in a very small area, a symmetrization argument and a
new call to Theorem \ref{inégalité BH} yields that $\lim_{\ee \to 0} \WW(\gamma) =
+\infty$, and, as in the previous case, $\gamma$ cannot be a minimal curve.
\item $\mathcal{U}$ surrounds both drops, and there exist two branches 
of $\gamma$ in the tubular neighborhood joining both drops (called outer and inner branches).
\end{enumerate}

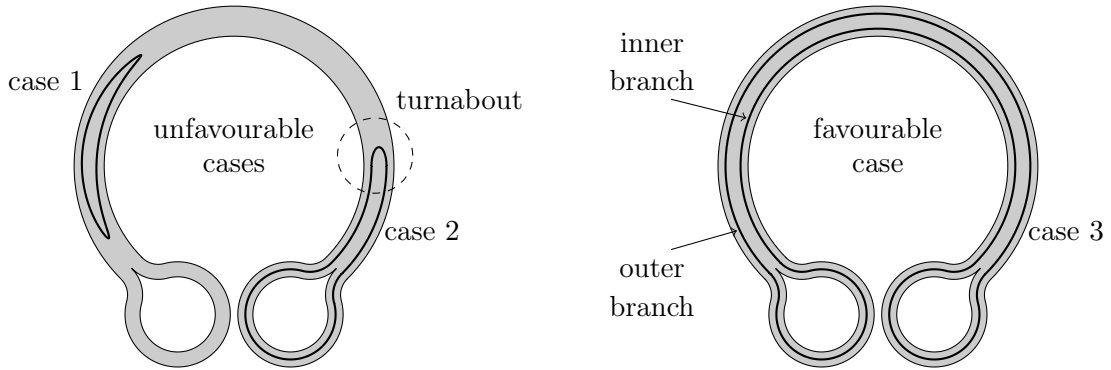
\begin{figure}[!ht]
\begin{center}

\begin{tikzpicture}[scale=0.5]
\filldraw[fill=gray!40, draw=black]
	plot[domain=0:60,samples=100]
		({-3/2-2*cos(30+45) + 0.6*cos(225+\x)},{-sqrt(6)/2+2*sin(30+45) + 0.6*sin(225+\x)}) --
	plot[domain=0:300,samples=100]
		({-3/2+1.4*cos(105-\x)},{-sqrt(6)/2+1.4*sin(105-\x)}) --
	plot[domain=0:60,samples=100]
		({-3/2-2*cos(-30+45)+0.6*cos(-15+\x)},{-sqrt(6)/2+2*sin(-30+45)+0.6*sin(-15+\x)}) --
	plot[domain=0:270,samples=100]
		({(0.4+sqrt(2)*(3/2+sqrt(6)/2))*cos(225-\x)},
		{(3/2+sqrt(6)/2)+(0.4+sqrt(2)*(3/2+sqrt(6)/2))*sin(225-\x)}) --
	plot[domain=-60:0,samples=100]
		({3/2+2*cos(-30+45)-0.6*cos(-15-\x)},{-sqrt(6)/2+2*sin(-30+45)+0.6*sin(-15-\x)}) --
	plot[domain=-300:0,samples=100]
		({3/2-1.4*cos(105+\x)},{-sqrt(6)/2+1.4*sin(105+\x)}) --
	plot[domain=-60:0,samples=100] ({3/2+2*cos(30+45)-0.6*cos(225-\x)},{-sqrt(6)/2+2*sin(30+45)+0.6*sin(225-\x)}) --
	plot[domain=-270:0,samples=100]
		({(-0.4+sqrt(2)*(3/2+sqrt(6)/2))*cos(225+\x)},
		{(3/2+sqrt(6)/2)+(-0.4+sqrt(2)*(3/2+sqrt(6)/2))*sin(225+\x)}) -- cycle ;
\filldraw[fill=white, draw=black]
	plot[domain=0:60,samples=100]
		({-3/2-2*cos(30+45) + cos(225+\x)},{-sqrt(6)/2+2*sin(30+45) + sin(225+\x)}) --
	plot[domain=0:300,samples=100]
		({-3/2+cos(105-\x)},{-sqrt(6)/2+sin(105-\x)}) --
	plot[domain=0:60,samples=100]
		({-3/2-2*cos(-30+45)+cos(-15+\x)},{-sqrt(6)/2+2*sin(-30+45)+sin(-15+\x)}) -- cycle ;
\filldraw[fill=white, draw=black]
	plot[domain=0:60,samples=100]
		({3/2+2*cos(30+45) - cos(225+\x)},{-sqrt(6)/2+2*sin(30+45) + sin(225+\x)}) --
	plot[domain=0:300,samples=100]
		({3/2-cos(105-\x)},{-sqrt(6)/2+sin(105-\x)}) --
	plot[domain=0:60,samples=100]
		({3/2+2*cos(-30+45)-cos(-15+\x)},{-sqrt(6)/2+2*sin(-30+45)+sin(-15+\x)}) -- cycle ;

\draw[thick] plot[domain=0:360, samples=100]
	({(sqrt(2)*(3/2+sqrt(6)/2))*cos(170+40*sin(\x)) + 0.2*cos(\x)},
	{(3/2+sqrt(6)/2)+(sqrt(2)*(3/2+sqrt(6)/2))*sin(170+40*sin(\x)) }) ;
\draw[thick]
	plot[domain=0:45,samples=100]
		({(0.2+sqrt(2)*(3/2+sqrt(6)/2))*cos(0-\x)},
		{(3/2+sqrt(6)/2)+(0.2+sqrt(2)*(3/2+sqrt(6)/2))*sin(0-\x)}) --
	plot[domain=-60:0,samples=100]
		({3/2+2*cos(-30+45)-0.8*cos(-15-\x)},{-sqrt(6)/2+2*sin(-30+45)+0.8*sin(-15-\x)}) --
	plot[domain=-300:0,samples=100]
		({3/2-1.2*cos(105+\x)},{-sqrt(6)/2+1.2*sin(105+\x)}) --
	plot[domain=-60:0,samples=100]
		({3/2+2*cos(30+45)-0.8*cos(225-\x)},{-sqrt(6)/2+2*sin(30+45)+0.8*sin(225-\x)}) --
	plot[domain=-45:0,samples=100]
		({(-0.2+sqrt(2)*(3/2+sqrt(6)/2))*cos(0+\x)},
		{(3/2+sqrt(6)/2)+(-0.2+sqrt(2)*(3/2+sqrt(6)/2))*sin(0+\x)}) -- 
	plot[domain=0:180,samples=100]
		({sqrt(2)*(3/2+sqrt(6)/2)+0.2*cos(180-\x)},{3/2+sqrt(6)/2+0.5*sin(180-\x)}) -- cycle ;
\draw (0,{1+(3/2+sqrt(6)/2)}) node{unfavourable}
	(0,{(3/2+sqrt(6)/2)}) node{cases} ;
\draw (-5,5) node{case $1$} (5,1) node{case $2$} ;
\draw[dashed] (3.75,3) circle (1) ;
\draw (6,4.5) node{turnabout} ;
\end{tikzpicture}
\qquad
\begin{tikzpicture}[scale=0.5]
\filldraw[fill=gray!40, draw=black]
	plot[domain=0:60,samples=100]
		({-3/2-2*cos(30+45) + 0.6*cos(225+\x)},{-sqrt(6)/2+2*sin(30+45) + 0.6*sin(225+\x)}) --
	plot[domain=0:300,samples=100]
		({-3/2+1.4*cos(105-\x)},{-sqrt(6)/2+1.4*sin(105-\x)}) --
	plot[domain=0:60,samples=100]
		({-3/2-2*cos(-30+45)+0.6*cos(-15+\x)},{-sqrt(6)/2+2*sin(-30+45)+0.6*sin(-15+\x)}) --
	plot[domain=0:270,samples=100]
		({(0.4+sqrt(2)*(3/2+sqrt(6)/2))*cos(225-\x)},
		{(3/2+sqrt(6)/2)+(0.4+sqrt(2)*(3/2+sqrt(6)/2))*sin(225-\x)}) --
	plot[domain=-60:0,samples=100]
		({3/2+2*cos(-30+45)-0.6*cos(-15-\x)},{-sqrt(6)/2+2*sin(-30+45)+0.6*sin(-15-\x)}) --
	plot[domain=-300:0,samples=100]
		({3/2-1.4*cos(105+\x)},{-sqrt(6)/2+1.4*sin(105+\x)}) --
	plot[domain=-60:0,samples=100]
		({3/2+2*cos(30+45)-0.6*cos(225-\x)},{-sqrt(6)/2+2*sin(30+45)+0.6*sin(225-\x)}) --
	plot[domain=-270:0,samples=100]
		({(-0.4+sqrt(2)*(3/2+sqrt(6)/2))*cos(225+\x)},
		{(3/2+sqrt(6)/2)+(-0.4+sqrt(2)*(3/2+sqrt(6)/2))*sin(225+\x)}) -- cycle ;
\filldraw[fill=white, draw=black]
	plot[domain=0:60,samples=100]
		({-3/2-2*cos(30+45) + cos(225+\x)},{-sqrt(6)/2+2*sin(30+45) + sin(225+\x)}) --
	plot[domain=0:300,samples=100]
		({-3/2+cos(105-\x)},{-sqrt(6)/2+sin(105-\x)}) --
	plot[domain=0:60,samples=100]
		({-3/2-2*cos(-30+45)+cos(-15+\x)},{-sqrt(6)/2+2*sin(-30+45)+sin(-15+\x)}) -- cycle ;
\filldraw[fill=white, draw=black]
	plot[domain=0:60,samples=100]
		({3/2+2*cos(30+45) - cos(225+\x)},{-sqrt(6)/2+2*sin(30+45) + sin(225+\x)}) --
	plot[domain=0:300,samples=100]
		({3/2-cos(105-\x)},{-sqrt(6)/2+sin(105-\x)}) --
	plot[domain=0:60,samples=100]
		({3/2+2*cos(-30+45)-cos(-15+\x)},{-sqrt(6)/2+2*sin(-30+45)+sin(-15+\x)}) -- cycle ;
\draw[thick]
	plot[domain=0:60,samples=100]
		({-3/2-2*cos(30+45) + 0.8*cos(225+\x)},{-sqrt(6)/2+2*sin(30+45) + 0.8*sin(225+\x)}) --
	plot[domain=0:300,samples=100]
		({-3/2+1.2*cos(105-\x)},{-sqrt(6)/2+1.2*sin(105-\x)}) --
	plot[domain=0:60,samples=100]
		({-3/2-2*cos(-30+45)+0.8*cos(-15+\x)},{-sqrt(6)/2+2*sin(-30+45)+0.8*sin(-15+\x)}) --
	plot[domain=0:270,samples=100]
		({(0.2+sqrt(2)*(3/2+sqrt(6)/2))*cos(225-\x)},
		{(3/2+sqrt(6)/2)+(0.2+sqrt(2)*(3/2+sqrt(6)/2))*sin(225-\x)}) --
	plot[domain=-60:0,samples=100]
		({3/2+2*cos(-30+45)-0.8*cos(-15-\x)},{-sqrt(6)/2+2*sin(-30+45)+0.8*sin(-15-\x)}) --
	plot[domain=-300:0,samples=100]
		({3/2-1.2*cos(105+\x)},{-sqrt(6)/2+1.2*sin(105+\x)}) --
	plot[domain=-60:0,samples=100]
	({3/2+2*cos(30+45)-0.8*cos(225-\x)},{-sqrt(6)/2+2*sin(30+45)+0.8*sin(225-\x)}) --
	plot[domain=-270:0,samples=100]
		({(-0.2+sqrt(2)*(3/2+sqrt(6)/2))*cos(225+\x)},
		{(3/2+sqrt(6)/2)+(-0.2+sqrt(2)*(3/2+sqrt(6)/2))*sin(225+\x)}) -- cycle ;
\draw (0,{1+(3/2+sqrt(6)/2)}) node{favourable}
	(0,{(3/2+sqrt(6)/2)}) node{case} ;
\draw (5,1) node{case $3$} ;
\draw (-6,6) node{inner} (-6,5) node{branch} (-6,0) node{outer} (-6,-1) node{branch} ;
\draw[->] (-5.5,4.5) -- (-3.5,4) ;
\draw[->] (-5.5,0.5) -- (-3.75,1) ;
\end{tikzpicture}

\end{center}
\caption{Three cases for a minimal curve}
\label{3cases}
\end{figure}

Focusing on the third case, as the tubular neighborhood has an angular amplitude bigger than $\pi$, we can find two non parallel straight lines tangent to the inner branch at points $x_\ell$ and $x_r$ near each drop (see Figure \ref{slide}). Consider the intersection point of these lines and a small dilation centered at this point. The part of the inner branch between both lines remains tangent to these lines under the dilation, and its energy decreases. We can reconnect this dilated arc with points $x_\ell$ and $x_r$ using tangential segments. We choose these tangential segments in order to have an admissible curve with a smaller energy than the minimum. This leads to a contradiction, thus any minimal curve has at least a self-contact, which yields the following result:

\begin{thm}\label{thkey}
For $\ee$ small enough, 
every minimal curve in $\OO_\ee$ has a self-contact point.
\end{thm}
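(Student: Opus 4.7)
The plan is a proof by contradiction. I would first record the $\ee$-independent upper bound $\WW(\gamma) \leq \WW(\Gamma) =: M_0$: the skeleton $\Gamma$, parameterized as a piecewise-$C^2$ loop with tangential self-contacts at the mouths of the drops, lies in $\OO_\ee$ for every $\ee > 0$ and is therefore admissible. Next I would suppose for contradiction that some minimizer $\gamma$ is a Jordan curve and, using Theorem~\ref{reparam}, reparameterize so that $\indc_\gamma \in \{0,1\}$. Set $\mathcal{U} = \{z \in \RR : \indc_\gamma(z) = 1\}$. The analysis then splits into the three cases flagged in the excerpt, depending on how $\mathcal{U}$ intersects the two drops.

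For Case~1 ($\mathcal{U} \subset \OO_\ee$), Theorem~\ref{inégalité BH} gives $\WW(\gamma)^2 |\mathcal{U}| \geq 4\pi^3$; since $|\mathcal{U}| \leq |\OO_\ee| = O(\ee)$, this forces $\WW(\gamma) \geq c\,\ee^{-1/2}$, which exceeds $M_0$ for small $\ee$. For Case~2 ($\mathcal{U}$ contains exactly one drop), $\gamma$ enters and exits the connecting tube through the same end and must therefore perform a U-turn inside the tube of width $\ee$. I would extract a sub-arc $\sigma$ along which the tangent rotates by $\pi/4$; then $\sigma$ lies in a ball of diameter $O(\ee)$, and reflecting $\sigma$ across the chord of its endpoints (followed by a standard $C^2$ smoothing of the two corners) produces a closed curve of bending energy $2\WW(\sigma) + O(1)$ enclosing an area of $O(\ee^2)$. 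Theorem~\ref{inégalité BH} then yields $\WW(\sigma) \geq c\,\ee^{-1}$, again contradicting $M_0$.

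The crux is Case~3, where $\gamma$ traverses the narrow arc portion of $\OO_\ee$ twice, producing an inner branch $\gamma_i$ and an outer branch. Because the circular arc in $\Gamma$ has angular amplitude $\theta > \pi$, I would pick parameters $\tau_\ell < \tau_r$ in the inner-branch interval such that the points $x_\ell := \gamma_i(\tau_\ell)$, $x_r := \gamma_i(\tau_r)$ lie near the two ends of the arc and the tangent lines $T_\ell, T_r$ to $\gamma_i$ at these points are not parallel; the excess $\theta > \pi$ also places their intersection $P$ strictly on the side opposite the arc. Letting $H_\lambda$ denote the homothety of ratio $1+\lambda$ centered at $P$, the invariance of $T_\ell, T_r$ under $H_\lambda$ guarantees that the dilated piece $H_\lambda(\gamma_i|_{[\tau_\ell,\tau_r]})$ is still tangent to $T_\ell$ at $H_\lambda(x_\ell)$ and to $T_r$ at $H_\lambda(x_r)$. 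I would then build $\tilde\gamma$ by replacing $\gamma_i|_{[\tau_\ell,\tau_r]}$ with the concatenation of a segment of $T_\ell$ from $x_\ell$ to $H_\lambda(x_\ell)$, the dilated piece, and a segment of $T_r$ from $H_\lambda(x_r)$ to $x_r$. Tangent continuity at the four junctions ensures $\tilde\gamma \in H^2$; the straight segments contribute no bending energy and the dilated piece has energy $\WW(\gamma_i|_{[\tau_\ell,\tau_r]})/(1+\lambda)$, so $\WW(\tilde\gamma) < \WW(\gamma)$, contradicting minimality.

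The main obstacle will be showing that $\tilde\gamma \subset \overline{\OO_\ee}$ for $\lambda$ small. The displacement vector $H_\lambda(x)-x = \lambda(x-P)$ points away from $P$, and by the choice of $P$ this direction at points of $\gamma_i|_{[\tau_\ell,\tau_r]}$ is essentially outward from the center of curvature of the arc portion of $\OO_\ee$, i.e., from the inner toward the outer boundary of the tube. It therefore suffices to pick $x_\ell, x_r$ on a portion of $\gamma_i$ strictly separated from the outer boundary, which is possible because $\gamma_i$ cannot coincide with the outer boundary along a nondegenerate sub-arc: that portion of $\od\OO_\ee$ has curvature comparable to $\ee^{-1}$ near each drop, which would violate $M_0$ for small $\ee$. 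A standard continuity argument then yields admissibility of $\tilde\gamma$ for $\lambda$ sufficiently small, closing the contradiction.
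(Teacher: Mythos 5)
Your proposal is correct and follows the paper's argument essentially step for step: the same three-case split on the region $\mathcal{U}$ enclosed by a hypothetical simple minimizer, the same application of Theorem \ref{inégalité BH} in Cases 1 and 2 (your reflection across the chord is exactly the ``symmetrization argument'' the paper invokes), and in Case 3 the same dilation of the inner branch from the intersection point of two non-parallel tangent lines, reconnected by tangential segments. The only caveat concerns your added justification that the dilated piece stays in $\overline{\OO_\ee}$: separating just $x_\ell$ and $x_r$ from the outer boundary does not control the whole dilated sub-arc, and the outer boundary of the tube has curvature of order $1$ (not $\ee^{-1}$) along the main circular arc --- but the paper leaves this admissibility check at the same informal level, so this is not a deviation from its argument.
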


\begin{figure}[!ht]
\begin{center}

\begin{tikzpicture}[scale=0.7]
\fill[color=gray!40]
	plot[domain=0:270, samples=100] ({3*cos(225-\x)},{3*sin(225-\x)}) --
	plot[domain=0:270, samples=100] ({2*cos(-45+\x)},{2*sin(-45+\x)}) -- cycle ;
\draw
	plot[domain=0:270, samples=100] ({3*cos(225-\x)},{3*sin(225-\x)})
	plot[domain=0:270, samples=100] ({2*cos(-45+\x)},{2*sin(-45+\x)}) ;
\draw[thick]
	plot[domain=0:270, samples=100] ({2.7*cos(225-\x)},{2.7*sin(225-\x)})
	plot[domain=0:270, samples=100] ({2.2*cos(-45+\x)},{2.2*sin(-45+\x)}) ;
\draw ({2.2*cos(-15)+3*cos(75)},{2.2*sin(-15)+3*sin(75)}) --
	({2.2*cos(-15)-3*cos(75)},{2.2*sin(-15)-3*sin(75)})
	({2.2*cos(195)+3*cos(105)},{2.2*sin(195)+3*sin(105)}) --
	({2.2*cos(195)-3*cos(105)},{2.2*sin(195)-3*sin(105)}) ;
\foreach \t in {10,20,...,171}
	{\draw[->] ({2.3*cos(\t)},{2.3*sin(\t)}) --
		+({0.3*cos(90+(15/105)*(\t-90)},{0.3*sin(90+(15/105)*(\t-90)}) ; }
\draw (-2.15,-0.5) node{$-$} +(0.5,0) node{$x_\ell$} ;
\draw (2.15,-0.5) node{$-$} +(-0.5,0) node{$x_r$} ;
\end{tikzpicture}
\qquad
\begin{tikzpicture}[scale=0.7]
\fill[color=gray!40]
	plot[domain=0:270, samples=100] ({3*cos(225-\x)},{3*sin(225-\x)}) --
	plot[domain=0:270, samples=100] ({2*cos(-45+\x)},{2*sin(-45+\x)}) -- cycle ;
\draw
	plot[domain=0:270, samples=100] ({3*cos(225-\x)},{3*sin(225-\x)})
	plot[domain=0:270, samples=100] ({2*cos(-45+\x)},{2*sin(-45+\x)}) ;
\draw[thick]
	plot[domain=0:270, samples=100] ({2.7*cos(225-\x)},{2.7*sin(225-\x)})
	plot[domain=0:30, samples=100] ({2.2*cos(-45+\x)},{2.2*sin(-45+\x)}) --
	plot[domain=30:240, samples=100] ({2.3*cos(-45+\x)},{0.4+2.3*sin(-45+\x)}) --
	plot[domain=240:270, samples=100] ({2.2*cos(-45+\x)},{2.2*sin(-45+\x)}) ;
\draw ({2.2*cos(-15)+3*cos(75)},{2.2*sin(-15)+3*sin(75)}) --
	({2.2*cos(-15)-3*cos(75)},{2.2*sin(-15)-3*sin(75)})
	({2.2*cos(195)+3*cos(105)},{2.2*sin(195)+3*sin(105)}) --
	({2.2*cos(195)-3*cos(105)},{2.2*sin(195)-3*sin(105)}) ;
\end{tikzpicture}

\end{center}
\caption{Dilation of a part of a minimal curve}
\label{slide}
\end{figure}
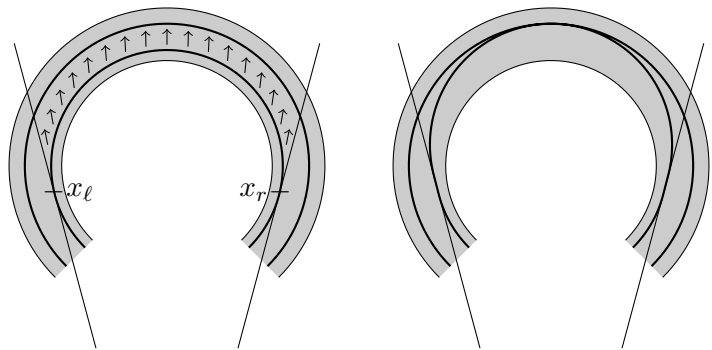



\end{document}